\DeclareMathOperator*{\argmin}{arg\,min}
\DeclareMathOperator*{\trace}{tr}
\DeclareMathOperator{\rank}{rank}
\newcommand{\dop}{\mathrm{D}}
\newcommand{\reals}{\mathbb{R}}
\newcommand{\irange}[1][k]{[\![#1]\!]}
\newcommand{\rkval}{R}
\newcommand{\cpdp}[1][U]{\llbracket #1^{(1)},\cdots, #1^{(k)}\rrbracket}
\newcommand{\tstar}{\mathcal{T}^{\star}}
\newcommand{\fro}[1]{\|#1\|_{\mathrm{F}}}
\newcommand{\frob}[1][X]{\|#1\|_{\mathrm{F}}}
\newcommand{\frobn}[1]{\left\|#1\right\|_{F}}
\newcommand{\twopartdef}[4]
{\left\{
    \begin{array}{ll}
      #1 & \mbox{if } #2 \\
      #3 & \mbox{ } #4
    \end{array}
  \right.
}
\newcommand{\eg}{e.g.}
\newcommand{\ie}{i.e.}
\newcommand{\rgrad}[1][x]{\mathrm{grad}f(#1)}
\newcommand{\grad}[1][f]{\mathrm{grad}#1}
\newcommand{\trgrad}[1][x]{\mathrm{grad}f(#1)}
\newcommand{\tansp}[1][x]{T_{#1}}
\newcommand{\ttanv}[1][\xi]{#1}
\newcommand{\prodsp}{\reals^{m_1\times R}\times\dots\times\reals^{m_k\times R}} 
\newcommand{\man}{\mathcal{M}}
\newcommand{\ttansp}{T_{U}\man}
\newcommand{\retr}[1][x]{\mathcal{R}_{#1}}
\newcommand{\mtotal}{\mathcal{M}}
\newcommand{\po}[1][\Omega]{\mathcal{P}_{#1}}
\newcommand{\rowof}[2]{#1_{#2,:}}
\newcommand{\colof}[2]{#1_{:,#2}}
\newcommand{\rstar}{r^{\star}}
\newcommand{\expe}[1][\Omega]{\mathbb{E}_{#1}}
\newcommand{\hmati}[1][i]{H_{#1,U}}
\newcommand{\ujexi}{(U^{(j)})^{\odot_{j\neq i}}}
\newcommand{\cchol}{C_{\mathrm{chol}}}
\newcommand{\algseq}[1][t]{\{x_{#1}\}_{#1\geq0}}
\newcommand{\sls}{\mathcal{L}}
\newcommand{\truncr}[1][r^{\star}]{\mathbb{T}_{#1}}
\newcommand{\rktc}{\rank_{\mathrm{tc}}}
\newcommand{\rkcp}{\rank_{\mathrm{CP}}}
\newcommand{\ddt}{\frac{\mathrm{d}}{\mathrm{d}t}}
\newcommand{\trp}{T}
\newcommand{\ceil}[1]{\left\lceil #1 \right\rceil}
\newcommand{\mc}[1]{\mathcal{#1}}
\newcommand{\revv}[1]{#1}
\newcommand{\revj}[1]{#1}
\begin{document}

\title{New Riemannian preconditioned algorithms for tensor completion via
polyadic decomposition\thanks{This work was supported by the Fonds de
la Recherche Scientifique -- FNRS and the Fonds Wetenschappelijk Onderzoek --
Vlaanderen under EOS Project no.\ 30468160 \revj{(SeLMA -- Structured low-rank matrix/tensor
approximation: numerical optimization-based algorithms and applications)}. The first author was supported by the FNRS through a FRIA scholarship.}}

\headers{Riemannian preconditioning for tensor completion}{S. Dong, B. Gao, Y. Guan, F. Glineur}

\author{
Shuyu Dong\thanks{ICTEAM, UCLouvain, Louvain-la-Neuve, Belgium
(\{shuyu.dong,~yu.guan\}@uclouvain.be, gaobin@lsec.cc.ac.cn).}
\and Bin Gao\footnotemark[2]
\and Yu Guan\footnotemark[2]
\and Fran\c{c}ois Glineur\thanks{ICTEAM and CORE, UCLouvain, Louvain-la-Neuve, Belgium (francois.glineur@uclouvain.be).} 
}

%**************

\maketitle

\begin{abstract}
We propose new Riemannian preconditioned algorithms for low-rank tensor completion
via the polyadic decomposition of a tensor. These algorithms exploit a
non-Euclidean metric on the product space of the factor matrices of the low-rank
tensor in the polyadic decomposition form. This new metric is designed using an
approximation of the diagonal blocks of the Hessian of the tensor completion
cost function, thus has a preconditioning effect on these algorithms. 
We prove that the proposed Riemannian gradient descent algorithm globally converges to a stationary point of the tensor completion problem, with convergence rate estimates using the {\L}ojasiewicz property. 
Numerical results on synthetic and real-world data suggest that the proposed algorithms are more efficient in memory and time compared to state-of-the-art algorithms. 
Moreover, the proposed algorithms display a greater tolerance for overestimated rank parameters in terms of the tensor recovery performance, thus enable a flexible choice of the rank parameter. 
\end{abstract}

\begin{keywords}
Tensor completion, polyadic decomposition, CP decomposition, Riemannian optimization, preconditioned gradient 
\end{keywords}

\begin{AMS}
15A69, 90C26, 90C30, 90C52
\end{AMS}

\section{Introduction}\label{sec:introduction}
Tensor completion refers to the task of recovering missing values of a
multidimensional array and can be seen as a generalization of the matrix completion problem.
Similar to the approximation of a matrix with low-rank models, the approximation of a tensor can be formulated by a {\it low-rank}\/ tensor model. 
Starting from this idea, 
low-rank tensor completion consists in finding a low-rank approximation of a
tensor based on a given subset of its entries. % 
Applications of low-rank tensor completion can be found in many areas, \eg, signal processing for EEG (brain signals) data~\cite{morup2006parallel} and MRI (magnetic
resonance imaging)~\cite{banco2016sampling}, and image and video
inpainting~\cite{bertalmio2000image,liu2012tensor,korah2007spatiotemporal}. % 

Depending on different tensor decomposition forms, there are several ways to define the rank of a tensor. 
Low-rank tensor decompositions provide a useful tool for tensor representation and are widely used in tensor completion~\cite{andersson1998improving,chu2009probabilistic,karatzoglou2010multiverse,acar2011scalable,papalexakis2016tensors,tomasi2005parafac,kolda2009tensor}. % 
The low-rank tensor decomposition paradigm 
allows for extracting the most meaningful and informative latent structures of a tensor, which usually contain heterogeneous and multi-aspect data. %
The Canonical Polyadic (CP)
decomposition~\cite{hitchcock1927expression,kruskal1977three,kolda2009tensor}, the Tucker 
or the multilinear decomposition~\cite{Tucker1966,DeLathauwer2000msvd,DeLathauwer2000:obr}, and the tensor-train (TT)
decomposition~\cite{oseledets2011tensor,grasedyck2015alternating,phien2016efficient} are among the most fundamental tensor decomposition forms. 
Other variants include hierarchical tensor representations~\cite{da2013hierarchical,rauhut2015tensor,rauhut2017low} and PARAFAC2 models~\cite{rauhut2015tensor}. %

\paragraph{Related work} For the tensor completion problem using the CP
decomposition (CPD), Tomasi and Bro~\cite{tomasi2005parafac} proposed to
used the Levenberg-Marquardt (modified Gauss--Newton) method for third-order tensors, % 
in which the rank-$R$ tensor candidate is represented by the
vectorization of all three factor matrices of the CPD; % 
Acar et~al.~\cite{acar2011scalable} proposed to use a nonlinear conjugate
gradient algorithm; % 
Jain et~al.~\cite{jain2014provable} proposed an alternating minimization 
algorithm, % 
which uses a special initialization by 
the Robust Tensor Power Method~\cite{anandkumar2014tensor} allowing for a
guaranteed tensor recovery with a sample complexity lower bound. % 
The Tucker decomposition, as a closely related decomposition format, is
also widely used for tensor completion, with more or less different application
purposes; see~\cite{cichocki2015tensor} about the relations and differences
between the Tucker and the CP decompositions. Kressner et~al.~\cite{kressner2014low} exploited the
Riemannian geometry of the rank-constrained search space with the Tucker
decomposition and proposed a Riemannian conjugate gradient (RCG) algorithm for
the tensor completion problem. Kasai and Mishra~\cite{kasai2016low} paid
attention to the data fitting function of tensor completion % 
and introduced a preconditioned metric on the quotient space of the
rank-constrained search space with the Tucker decomposition; they used a
Riemannian conjugate gradient algorithm with respect to the proposed metric. 
\revj{Breiding and Vannieuwenhoven~\cite{breiding2018riemannian} proposed a
Riemannian Gauss--Newton method for tensor approximation, which is based on the
{\it Segre manifold} structure of tensors with a bounded rank.} % 
\revj{We refer
to~\cite{sorber2013optimization,sorber2015structured,sorensen2019fiber} for more recent work about CP decomposition methods with fully observed or missing entries.}

For a $m_1\times\dots\times m_k$ tensor that is only partially observed, low-rank tensor completion using CP
decomposition %
can be modeled by approximating the given partially observed tensor with a rank-$R$ tensor candidate in the CP decomposition form $\mathcal{T}=\llbracket
U^{(1)},\dots,U^{(k)}\rrbracket$, where $U^{(i)}$ are $m_i\times R$ matrices with full column-rank % 
and $\llbracket U^{(1)},\dots,U^{(k)}\rrbracket$ denotes the product of the CP decomposition, which is the sum of the outer products of the respective columns of $U^{(i)}$. % 
In such problem formulations, the CP decomposition not only provides a powerful
data representation, but also has an advantageously low
memory requirement---which scales as $O((m_1+\dots+m_k)R)$ for a given CP rank
$R$---compared to other types of models (\eg,~\cite{liu2012tensor}) that involve otherwise a full dense
tensor variable (requiring $O(m_1m_2\dots m_k)$ memory). %
However, the fixed-rank CP decomposition, as well as other tensor decomposition models with a fixed-rank, requires the choice of an appropriate rank value. Since an optimal rank choice is usually unknown in practice, fixing the tensor rank in the CP decomposition-based (as well as Tucker-based) approaches is not an ideal strategy. Unfortunately, the search or estimation of the optimal rank is also hard~\cite{kruskal1989rank}. Therefore, the approach with a fixed CP rank limits the applicability of the completion model, and it is %
natural to study CP decompositions with a rank upper bound (\eg,~\cite{liu2014factor}). %
For example, a rank-increasing approach %
is used (\eg,~\cite{yokota2016smooth}) during the optimization process for the exploration of an optimal rank. % 

In this paper, we focus on the polyadic decomposition (PD) approach to tensor completion %
and consider the tensor candidate with a bounded CP rank.
More precisely, the $k^{\text{th}}$-order tensor variable is represented by $k$ factor
matrices in the polyadic decomposition form. Thus, we formulate the tensor completion
problem as follows, %
\begin{equation}\label{prog:main}
\underset{U:=(U^{(1)},\dots,U^{(k)})\in\man}{\text{minimize}}\quad
\frac{1}{2}\fro{\po(\llbracket U^{(1)},\dots,U^{(k)}\rrbracket - \tstar)}^2 + \psi(U),
\end{equation}
where $\llbracket U^{(1)},...,U^{(k)}\rrbracket$ denotes the tensor candidate in
the form of polyadic decomposition, $\tstar$ is the partially observed tensor, $\Omega$ is an index set indicating the observed entries of $\tstar$, $\po$ denotes the orthogonal projector such
that the $(i_1,\dots,i_k)$-th entry of $\po(Z)$ is equal to $Z_{i_1,\dots,i_k}$
if $(i_1,\dots,i_k)\in\Omega$ and zero otherwise, and the search space $\man$ is
a product space defined as 
\begin{equation}\label{def:man}
\man=\reals^{m_1\times R}\times\cdots\times\reals^{m_k\times R} 
\end{equation} 
with a rank parameter $R$, and $\psi:\man\mapsto\reals$ is a regularization function. %
\revj{Note that the search space of~\eqref{prog:main} includes points such
that} 
the actual CP rank of the product $\llbracket U^{(1)},\dots,U^{(k)}\rrbracket$
is smaller than the rank parameter $R$. 
\revj{Hence, the PD representation of the tensor candidate by $U\in\man$ is not necessarily {\it canonical}.} 
\revj{In the context of low-rank tensor completion, the optimal rank of the
tensor candidate is unknown, and thus, it is important to choose 
appropriate parameters of the low-rank model in question. 
One of the approaches (using low-rank tensor decomposition) in previous work is
to explore and adjust the rank parameter $R$
sequentially~\cite{yokota2016smooth}. In the case of the model using the Tucker decomposition with a fixed Tucker rank~\cite{kasai2016low}, the natural way to choosing the Tucker rank is to explore among a range of ($k$-dimensional) trials.}

\revj{In this work, we are interested in choosing a large enough rank
parameter (while maintaining it in the range of values that are much lower
than the tensor dimensions) for the model~\eqref{prog:main}. To alleviate issues where $R$ is overestimated
compared to the rank of the desired solution, we design algorithms that
tolerate rank-deficient factor matrices (or {\it almost} rank-deficient matrices, with close-to-zero tailing singular values). % 
The proposed algorithms are shown in numerical results to be able to reach
optimal solutions to~\eqref{prog:main}, given a large enough value of $R$, and
thus, avoid lengthy sequential rank explorations.}
%%% 

\paragraph{Contributions} 
We design a {\it preconditioned} metric on the search space $\man$ of the polyadic decomposition model~\eqref{prog:main} and propose %
Riemannian gradient descent (RGD) and Riemannian conjugate gradient (RCG) algorithms 
to solve the tensor completion problem.

We prove that the sequence of iterates generated by the RGD algorithm converges to a critical point of the objective function and provide estimates of the convergence rate using the %
{\L}ojasiewicz property.

We test on synthetic data for recovering a partially observed tensor with and without additive noise. The numerical results show that our algorithms outperform the several existing algorithms. %
On the real-world dataset (MovieLens~1M), we %
find that the proposed algorithms are also faster %
than the other algorithms under various rank choices. %
Moreover, the tensor recovery performance of our algorithms is not sensitive to the choice of the rank parameter, in contrast to algorithms based on a fixed-rank model.

\paragraph{Organization} We give the definitions and notation for the tensor
operations and state the main problem in Section \ref{sec:preliminaries}. The proposed algorithms and convergence analysis are presented in Sections~\ref{sec:algorithm}--\ref{sec:convergence}. %
Numerical results are reported in Section~\ref{sec:numerical}. We conclude the paper in Section~\ref{sec:conclusion}.

\section{Preliminaries and problem statement}\label{sec:preliminaries}
In this section, we introduce the definitions and notation involved in the
tensor operations and give a concrete problem formulation of~\eqref{prog:main}. 

The term {\it tensor} refers to a multidimensional array. The dimensionality of a
tensor is described as its order. A $k^{\text{th}}$-order tensor is a $k$-way array, also
known as a $k$-mode tensor. 
We use %
the term {\it mode} to describe operations on a specific dimension (e.g., mode-$k$
matricization).

For a strictly positive integer $n$, we denote the index set $\{1,\dots,n\}$ as $\irange[n]$. The set of $n$-dimensional real-valued vectors is denoted by $\reals^n$. For $\ell\in\irange[n]$, we denote by $\mathbf{e}_{\ell}$ the ($n$-dimensional) vector that indicates $1$ in the $\ell$-th entry: $[\mathbf{e}_{\ell}]_{\ell}=1$ and $[\mathbf{e}_{\ell}]_{j}=0$ for all $j\neq \ell$. 
The set of $k^{\text{th}}$-order real-valued tensors is denoted by $\reals^{m_1\times\dots \times m_k}$. 
The $i$-th row and the $j$-th column of a matrix $A$ are denoted by $\rowof{A}{i}$ and
$\colof{A}{j}$ respectively. 
An entry of a real-valued $k^{\text{th}}$-order tensor $\mathcal{Z}\in {\mathbb{R}^{m_{1}\times\cdots\times m_{k}}}$
is accessed via an $k$-dimensional index 
$[\ell_{j}]_{j=1,\dots,k}$, with 
$\ell_j\in\irange[m_j]$, and is denoted as $\mathcal{Z}_{\ell_{1},\ldots,\ell_{k}}$. 
The inner product of two tensors $\mathcal{Z}^{(1)},\mathcal{Z}^{(2)}\in
{\mathbb{R}^{m_{1}\times\cdots\times m_{k}}}$ %
is defined as follows 
$\left\langle \mathcal{Z}^{(1)},\mathcal{Z}^{(2)} \right\rangle=\sum_{i_{1}=1}^{m_{1}}\cdots \sum_{i_{k}=1}^{m_{k}}\mathcal{Z}^{(1)}_{i_{1},\ldots,i_{k}}\mathcal{Z}^{(2)}_{i_{1},\ldots,i_{k}}$. % 
The Frobenius norm of a tensor
$\mathcal{Z}$ is defined~as 
$\frob[\mathcal{Z}]=\sqrt{\langle \mathcal{Z},\mathcal{Z}\rangle}$.

The following definitions are involved in the tensor computations. 
The Kronecker product of vectors $\mathbf{u}=\left[u_{\ell}\right]\in\mathbb{R}^{m_{1}}$ and $\mathbf{v}=\left[v_{\ell}\right]\in\mathbb{R}^{m_{2}}$ results in a vector 
% $\mathbf{u}\otimes \mathbf{v}$ defined as 
$\mathbf{u}\otimes \mathbf{v} =[u_{1}\mathbf{v}^{\trp}, u_{2}\mathbf{v}^{\trp}, \dots, u_{m_{1}}\mathbf{v}^{\trp}]^{\trp}\in\mathbb{R}^{m_{1}m_{2}}$. %
The Khatri--Rao product of two matrices with the same number of columns 
$U\in\mathbb{R}^{m_{1} \times R}$ and $V\in \mathbb{R}^{m_{2}\times R}$ 
is defined and denoted as $U\odot V = [
    \colof{U}{1} \otimes \colof{V}{1}, \dots, \colof{U}{R} \otimes \colof{V}{R}]\in\reals^{m_{1}m_{2}\times R}$. %
The Hadamard product of two matrices $A$ and $B$ of the same dimensions,
denoted by $A\star B$, is a matrix of the same dimensions by entrywise
multiplications: $[A\star B]_{ij}=A_{ij}B_{ij}$. %
The mode-$\ell$ product of a given tensor $\mathcal{G}\in {\mathbb{R}^{r_{1}\times\dots\times r_{k}}}$ 
with a matrix $U\in\mathbb{R}^{m\times r_{\ell}}$, denoted as $\mathcal{G}\times_{\ell} U$ is a tensor of size 
$r_{1}\times \cdots\times r_{\ell-1}\times m\times r_{\ell+1}\cdots\times
r_{k}$, which has entries %
 \begin{equation*}
  [\mathcal{G}\times_{\ell} U]_{i_{1},\ldots,i_{\ell-1},j,i_{\ell+1,\ldots,i_{k}}}=\sum_{p=1}^{r_{\ell}}U_{j,p}\mathcal{G}_{i_{1},\ldots,i_{\ell-1},p,i_{\ell+1,\ldots,i_{k}}},
 \end{equation*}
 for 
$j\in\irange[m]$, $i_1\in\irange[r_1]$ and so on. %
The mode-$i$ matricization $\mathcal{Z}_{(i)}$ of a tensor $\mathcal{Z}\in
{\mathbb{R}^{m_{1}\times\cdots\times m_{k}}}$, also called the unfolding of $\mathcal{Z}$ along the $i$-th mode, is a matrix of size $m_{i}\times (\prod_{j\neq i}m_{j})$ such that the
tensor element $z_{\ell_{1},\ldots ,\ell_{k}}$ in $\mathcal{Z}$ is identified with the matrix element $[\mathcal{Z}_{(i)}]_{\ell_{i}, r_{i}}$ in $\mathcal{Z}_{(i)}$, where
$r_{i}=1+\sum^{k}_{\substack{n=1, n \neq i}}(\ell_{n}-1)I_{n}$, with $\quad I_{n}=\prod^{n-1}_{\substack{j=1,j \neq i}}m_{j}$. Using the $(\ell_i,r_i)$-indexing, the mode-$i$ matricization of the $k$-dimensional indices in an index set $\Omega\subset\irange[m_1]\times\dots\times\irange[m_k]$ is defined in the same way as in the tensor matricization, and the mode-$i$ matricization of $\Omega$ is an index set $\Omega_{(i)}$ such that
$(\ell_i, r_i)\in\Omega_{(i)} \text{~if and only if~} (\ell_1,\dots,\ell_k)\in\Omega$.

\begin{restatable}[Tucker decomposition]{redef}{deftcdecomp}\label{def:tucker-decomp}
The Tucker decomposition of a
tensor~\cite{Tucker1966,DeLathauwer2000msvd,DeLathauwer2000:obr} is defined with 
a series of mode-$\ell$ products between a core tensor %
$\mathcal{G}\in\mathbb{R}^{r_1\times\dots\times r_k}$ 
and the (orthogonal) factor matrices $U^{(i)}\in\mathbb{R}^{m_{i}\times r_{i}}$ such~that
\begin{equation*}%
\mathcal{Z}=\mathcal{G}\times_{1}U^{(1)}\times_{2}\cdots\times_{k}U^{(k)}.
\end{equation*}
\end{restatable}

A tensor $\mathcal{Z}$ admitting a Tucker decomposition form with
$\mathcal{G}$ and $U^{(1)},\dots,U^{(k)}$ can be unfolded along its $i$-th mode
into the following matricization, 
\begin{equation*}%
\mathcal{Z}_{(i)} = U^{(i)}\mathcal{G}_{(i)}\left(U^{(i-1)}\otimes \cdots
\otimes U^{(1)} \otimes U^{(k)}\otimes \cdots \otimes U^{(i+1)}\right)^{\trp}. 
\end{equation*}
The tensor Tucker rank or multilinear rank~\cite{Tucker1966} %
is defined as 
\begin{equation*}
	\rktc(\mathcal{Z})=(\rank(\mathcal{Z}_{(1)}),\ldots, \rank(\mathcal{Z}_{(k)}) ), 
\end{equation*}
where $\rank(\mathcal{Z}_{(i)})$ denotes the matrix rank, for $i=1,\dots,k$. 

A tensor $\mathcal{Z}\in\reals^{m_1\times\dots\times m_k}$ of the form $\mathcal{Z}=\mathbf{u}^{(1)}\circ \cdots \circ
\mathbf{u}^{(k)}$, %
where $\mathbf{u}^{(i)}\in\reals^{m_i}$ and %
$\circ$ denotes the outer product, is said to be a rank-$1$ tensor, which is also called a simple tensor \cite{hernandez2010simple} or decomposable tensor \cite{hackbusch2012tensor}. %
The CP rank of a tensor $\mathcal{Z}$ is defined as the minimum number of rank-$1$ tensors which sum to $\mathcal{Z}$~\cite{hitchcock1927expression,kruskal1977three}:
\begin{equation*}
 \rkcp(\mathcal{Z})=\min\{R\in\mathbb{Z}_{+}: \exists\{\mathbf{u}_{r}^{(1)},\dots,\mathbf{u}_r^{(k)}\}_{r=1,\dots,R}, \text{~s.t.~} \, \,\mathcal{Z}=\sum_{r=1}^{R} \mathbf{u}_{r}^{(1)}\circ\dots\circ\mathbf{u}_{r}^{(k)}\}.
\end{equation*}

\begin{restatable}[CP decomposition]{redef}{defcpd}\label{def:cp-decomp}
Let $\mathcal{Z}\in\reals^{m_1\times\dots\times m_k}$ be a tensor such that
$\rkcp(\mathcal{Z})=R$. Then there exist $k$ matrices $U^{(i)}\in\reals^{m_i\times R}$ with %
full column-rank, 
for $i=1,\dots, k$, such that 
\begin{equation*}%\label{cp_decomposition}
\mathcal{Z}=\llbracket U^{(1)},\ldots,U^{(k)}\rrbracket=\sum_{r=1}^{R}U_{:,r}^{(1)}\circ \dots \circ U_{:,r}^{(k)},
\end{equation*}
which is referred to as the Canonical Polyadic decomposition~\cite{hitchcock1927expression,kruskal1977three,kolda2009tensor} of $\mathcal{Z}$. %
\end{restatable}
The CP decomposition can be considered as the ``diagonalized'' version of the Tucker decomposition (Definition~\ref{def:tucker-decomp}), in the sense that a tensor that admits a rank-$R$ CP decomposition also admits a Tucker decomposition with
a $R\times\dots\times R$ hypercube as core tensor, whose nonzero
values are only on the diagonal entries. \cref{fig:tucker-cpd} illustrates the Tucker and CP decompositions of a third-order tensor. 
The matricization of a tensor in the CP decomposition form $\mathcal{Z}=\llbracket U^{(1)},\dots,U^{(k)}\rrbracket$ can be
written using the Khatri--Rao product, 
\begin{equation}\label{eq:def-cpd-mat}
\mathcal{Z}_{(i)}=U^{(i)}[(U^{(j)})^{\odot_{j\neq i}}]^{\trp} := U^{(i)}(U^{(k)}\odot \cdots \odot U^{(i+1)} \odot U^{(i-1)}\odot \cdots \odot U^{(1)})^{\trp}.
\end{equation}

\begin{figure}[tpb]
 \centering
 {\includegraphics[width=0.37\textwidth]{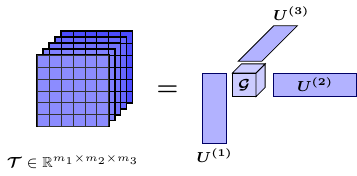}}
 {\includegraphics[width=0.57\textwidth]{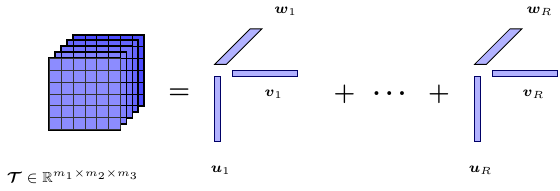}}
 \caption{Left: Tucker decomposition. Right: CP decomposition.}
 \label{fig:tucker-cpd}
\end{figure}

A point in the search space $\man=\prodsp$ of~\eqref{prog:main} is denoted by
$U=(U^{(1)},\dots,U^{(k)})$, where $U^{(i)}\in\reals^{m_i\times R}$ is the $i$-th {\it factor matrix} of $U$. %
In fact, $\man$ is a smooth manifold, and the tangent space to $\man$ at any point $U\in\man$ is $\tansp[U]\man=\prodsp$. %
Therefore, a tangent vector in $\tansp[U]\man$ is also a tuple of $k$ factor matrices, denoted as $\xi=(\xi^{(1)},\dots,\xi^{(k)})$, where $\xi^{(i)}\in\reals^{m_i\times R}$, for $i=1,\dots,k$. %
The Euclidean metric at $U$ is defined and denoted as, for all $\xi,\eta\in\tansp[U]\man$, $\braket{\xi,\eta} = \sum_{i=1}^{k} \trace({\xi^{(i)}}^{\trp} \eta^{(i)})$, %
where $\trace(\cdot)$ is the trace of a matrix. 

Let $\man$ be endowed with a Riemannian metric $g$, then the Riemannian gradient of a real-valued smooth function $f$ at $U\in\man$, denoted as $\rgrad[U]$, is the unique element in $\tansp[U]\man$ that satisfies, for all $\xi\in\tansp[U]\man$, 
$g_{U}(\rgrad[U],\xi) = \dop f(U)[\xi]$, %
where $\dop f(U)$ denotes the \revj{(Euclidean)} first-order differential of $f$ at $U$. \revj{In
particular, the Euclidean gradient of $f$ at $U$ is denoted as $\nabla f(U)$.} %

\paragraph{Problem statement} 
By setting the regularizer as $\psi(U)= \frac{\lambda}{2}
\sum_{i=1}^{k}\frob[U^{(i)}]^{2}$, %
in a similar way as in Maximum-margin Matrix Factorization~\cite{Srebro2005}, we specify the polyadic decomposition-based model~\eqref{prog:main} as follows, 
\begin{equation}\label{pb:main}
\underset{U\in\man=\prodsp}{\text{minimize}}~~ f(U):=
f_{\Omega}(U) + 
\frac{\lambda}{2}
\sum_{i=1}^{k}\frob[U^{(i)}]^{2},
\end{equation}
where the first term $f_{\Omega}(U):= \frac{1}{2p}\fro{\po(\llbracket U^{(1)},\cdots, U^{(k)}\rrbracket- \tstar)}^{2}$ %
is referred to as the data fitting function of the problem, and $p=|\Omega|/(m_1\cdots m_k)$ is a constant called the {\it sampling rate}. %
Note that when the regularization parameter $\lambda>0$, the
regularizer $\psi$ has an effect of keeping the variable $U$ in a compact subset of $\man$.

\section{Algorithms}\label{sec:algorithm}
In this section, we propose a new metric on the manifold $\man=\prodsp$. 
Under the proposed metric, we develop Riemannian gradient descent and Riemannian conjugate gradient algorithms on $\man$.

\subsection{A preconditioned metric}\label{ssec:varmetric}
The Riemannian preconditioned algorithms in~\cite{Mishra2012a,kasai2016low} on
the product space of full-column rank matrices were proposed to
improve the Euclidean gradient descent method. 
In these algorithms, a Riemannian metric %
was defined on the search space according to the differential properties of the
cost function. Specifically, it is designed based on an
operator that approximates the ``diagonal
blocks'' of the second-order differential of the cost function. The chosen
metric plays a role of preconditioning in the optimization algorithms such as
Riemannian gradient descent. We refer to~\cite{mishra2016riemannian} for a more
general view on this topic of Riemannian preconditioning. 

Starting from the idea of Riemannian preconditioning, we design a metric for the polyadic
decomposition-based problem~\eqref{prog:main}, in which the rank constraints are more relaxed than the fixed Tucker-rank constraint in~\cite{kasai2016low}. %
First, we construct an operator
$\mathcal{H}(U)\revj{:\tansp[U]\man\mapsto\tansp[U]\man}$ using the ``diagonal blocks'' of the second-order differential of the data fitting function $f_{\Omega}$ in~\eqref{pb:main}. %
Roughly speaking, such an operator satisfies
\begin{equation}\label{eq:diff2-approx}
    \braket{\mathcal{H}(U)[\xi],\eta} \approx \nabla^{2}f_{\Omega}(U)[\xi,\eta],
\end{equation}
for all $\xi,\eta\in\tansp[U]\man$, where $\nabla^{2} f_{\Omega}(U)$ denotes the
\revj{(Euclidean)} second-order differential of $f_{\Omega}$, and $\braket{\cdot,\cdot}$ is 
the Euclidean metric. 
\revv{Then, we design a metric that behaves
locally like $(\xi,\eta)\mapsto\braket{\mathcal{H}(U)[\xi],\eta}$. %
Assume that the operator $\mathcal{H}(U)$
is invertible and that $\tilde{g}:
(\xi,\eta)\mapsto\braket{\mathcal{H}(U)[\xi],\eta}$ forms a Riemannian metric on
$\man$, then the Riemannian gradient $\grad[f_{\Omega}](U)$ satisfies, by definition, %
$\tilde{g}_{U}(\grad[f_{\Omega}](U),\xi) = \dop f_{\Omega}(U)[\xi]= \braket{\nabla f_{\Omega}(U),\xi}$ for all $\xi\in\tansp[U]\man$, 
where $\nabla f_{\Omega}(U)$ denotes the Euclidean gradient of $f_{\Omega}$. Hence, 
$\grad[f_{\Omega}](U)={\mathcal{H}(U)}^{-1}\left[\nabla f_{\Omega}(U)\right]$.
In view of $\mathcal{H}(U)$ as in~\eqref{eq:diff2-approx}, $\grad[f_{\Omega}](U)$ is an approximation of the Newton direction of $f_{\Omega}$, %
which results in a much improved convergence behavior than the Euclidean
gradient descent.}

\revj{For the tensor completion problem~\eqref{prog:main}, one difficulty in designing such a metric using $\mathcal{H}(U)$ as
in~\eqref{eq:diff2-approx} is that we need to find an appropriate approximation
to the second-order differential $\nabla^2 f_{\Omega}(U)$.} 
\revj{This requires finding the explicit forms of the first and second-order
derivatives of $f_{\Omega}$ as defined in~\eqref{pb:main}, and hence those of the CPD map $\Psi: \man\mapsto \reals^{m_1\times\dots\times m_k}:U\mapsto \cpdp[U]$.} 
\revj{Concretely, we proceed as follows.}

First, we deduce the second-order partial derivatives of $f_{\Omega}$. %
Let $U$ be a point in $\man$. %
The \revj{Euclidean gradient of $f_{\Omega}$ at $U\in\man$ is} 
\begin{equation}\label{eq:def-egrad}
  \nabla f_{\Omega}(U) =(\partial_{U^{(1)}} f_{\Omega}(U),\dots,\partial_{U^{(k)}} f_{\Omega}(U)),
\end{equation}
where the partial derivatives have the following element-wise expression, for $i=1,\dots, k$, %
\begin{equation}\label{eq:fpartui}
    \left[\partial_{U^{(i)}} f_{\Omega}(U)\right]_{\ell,r}:=\left.{\ddt f_{\Omega}(U^{(1)},\dots,U^{(i)}+t E_{\ell,r},\dots,U^{(k)})}\right|_{t=0}, %
\end{equation} 
where %
$E_{\ell,r} = \mathbf{e}_{\ell}\mathbf{e}_{r}^{\trp}$. %
The right-hand side of~\eqref{eq:diff2-approx} can be written in terms of the following second-order partial derivatives, %
\[
\nabla^2 f_{\Omega}(U)[\xi,\eta] := \sum_{i=1}^{k} \braket{\partial^{2}_{ii} f_{\Omega}(U)[\xi],\eta} + 
\sum_{i=1,j'\neq i}^{k} \braket{\partial^{2}_{ij'}f_{\Omega}(U)[\xi],\eta},\]
where 
\begin{equation}\label{eq:def-fpartij}
\partial^{2}_{ij}f_{\Omega}(U)[\xi]:= \left.{\ddt\left(\partial_{U^{(i)}}f_{\Omega}(U^{(1)},\dots,U^{(j)}+t \xi^{(j)},\dots,U^{(k)})\right)}\right|_{t=0},
\end{equation} 
for $i,j=1,\dots,k$.  
Subsequently, we construct $\mathcal{H}(U)$ as an operator on $\tansp[U]\man$ based
on the action of the aforementioned ``diagonal blocks'' of
$\nabla^{2}f_{\Omega}(U)$, \ie, $\partial^2_{ii}f_{\Omega}(U)[\xi]$ for
$i=1,\dots,k$. More accurately, 
to design a metric that works for all tensor completion problems with a certain subsampling pattern $\Omega$, %
we use the expectation of these terms over the subsampling operator. %
Therefore, we define $\mathcal{H}(U):\tansp[U]\man\mapsto\tansp[U]\man$ as follows,
\begin{equation}\label{eq:def-oph}
\mathcal{H}(U)[\xi] = \left( \expe[\Omega]\left[\partial^2_{11}f_{\Omega}(U)[\xi]\right],\dots,\expe[\Omega]\left[\partial^2_{kk}f_{\Omega}(U)[\xi]\right]\right).
\end{equation}

Given the polyadic decomposition-based data fitting function $f_{\Omega}$
in~\eqref{pb:main}, $f_{\Omega}$ can be rewritten in terms of $U^{(i)}$ and $\ujexi$ through the mode-$i$ tensor matricizations~\eqref{eq:def-cpd-mat}: %
$f_{\Omega}(U) = \frac{1}{2p}\frobn{\po[\Omega_{(i)}]\left(U^{(i)}(\ujexi)^{\trp} - \tstar_{(i)}\right)}^{2}$, 
where $\Omega_{(i)}$ is the mode-$i$ matricization of $\Omega$. %
Therefore, from~\eqref{eq:fpartui}, the first-order derivatives have the following expression, %
\begin{equation}\label{eq:fparti}
    \partial_{U^{(i)}} f_{\Omega} (U) =\frac{1}{p} \mathcal{S}_{(i)}\ujexi,
\end{equation}
where $\mathcal{S}_{(i)}$ is the mode-$i$ matricization of the residual %
$\mathcal{S}:= \po\left(\cpdp-\tstar\right)$. %
Combining~\eqref{eq:def-fpartij} and~\eqref{eq:fparti}, it follows that %
\begin{align}
    \partial^{2}_{ii} f_{\Omega} (U)[\xi] = \frac{1}{p}\po[\Omega_{(i)}]\left(\xi^{(i)}(\ujexi)^{\trp}\right)\ujexi.\label{eq:fpartii}
\end{align} %
Let $\Omega$ be a random index set of entries that are i.i.d.\ samples of the Bernoulli distribution: %
$(i_{1},\dots,i_{k})\in\Omega$, with probability $p$, for $(i_1,\dots,i_k)\in\irange[m_1]\times\dots\times\irange[m_k]$. 
Then by taking the expectation over the index set $\Omega$,%
~\eqref{eq:fpartii} has the following approximation 
\begin{equation}\label{Hessian_diag_approx}
\mathbb{E}_{\Omega}\left[\partial^{2}_{ii} f_{\Omega}(U) [\xi]\right] = \xi^{(i)} (\ujexi)^{\trp}\ujexi. 
\end{equation}
Therefore, the operator $\mathcal{H}(U)$ as defined in~\eqref{eq:def-oph} reads   
\begin{equation}\label{eq:def-hu-tc}
\mathcal{H}(U)[\xi]=\left(\xi^{(1)} (U^{(j)})^{\odot_{j\neq 1}})^{\trp}(U^{(j)})^{\odot_{j\neq 1}},\dots, \xi^{(k)} (U^{(j)})^{\odot_{j\neq k}})^{\trp}(U^{(j)})^{\odot_{j\neq k}}\right).
\end{equation}
\revj{Note that, by using the approximation~\eqref{Hessian_diag_approx}, we finally obtain an operator $\mc{H}(U)$ that is independent of the subsampling index set $\Omega$.} %
\revj{Interestingly, the idea of Riemannian preconditioning using~\eqref{eq:diff2-approx}--\eqref{eq:def-hu-tc}, for the problem~\eqref{pb:main}, is similar to the Riemannian Gauss--Newton (RGN) method for minimizing $\frac{1}{2p}\fro{\Psi(U)-\tstar}^2$ in~\cite{breiding2018riemannian}, where the normal equation of their RGN method involves the Jacobian $J$ of the CPD map $\Psi:U\mapsto\cpdp[U]$. The operator~\eqref{eq:def-hu-tc} is thus similar to the term $J^T J$ in their RGN method.}

A second difficulty is that the operator $\mathcal{H}(U)$ in~\eqref{eq:def-hu-tc} may not be invertible, since 
the $R$-by-$R$ symmetric matrices in the form of $(\ujexi)^{\trp}\ujexi$ are not necessarily positive definite. To this end, we propose to regularize $\mathcal{H}(U)$ with the identity operator on $\tansp[U]\man$. This is done by {\it shifting} the aforementioned matrices by adding a constant diagonal matrix $\delta I_{R}$, where $I_{R}$ denotes the $R$-by-$R$ identity matrix and $\delta>0$ is a relatively small parameter. 
Consequently, we define the following inner product. %

\begin{definition}\label{def:metric}
Given $U=(U^{(1)},\dots,U^{(k)})\in\man$, let $g_{U}$ 
be an inner product in $\tansp[U]\man$ as follows, 
\begin{equation}\label{metric}
	g_{U}(\xi,\eta)=\sum_{i=1}^{k}\trace\left( \xi^{(i)}\hmati(\eta^{(i)})^{T} \right),\text{~for~} \xi, \eta\in\tansp[U]\man, 
\end{equation}
where $\hmati$ is a $R$-by-$R$ matrix defined as 
\begin{equation}\label{def:mat-precon}
\hmati=(\ujexi)^{\trp}\ujexi + \delta I_{R}
\end{equation} and $\delta>0$ is a constant parameter. 
\end{definition}

Note that $\hmati$ is positive definite even when there is a rank-deficient factor matrix among $\{U^{(i)}\}$. 
Moreover, $g$ is smooth on $\man$, and is therefore a Riemannian metric.

Now, we consider $\man$ %of~\eqref{prog:main} 
as a manifold endowed with the Riemannian metric~\eqref{metric}. The associated norm of a tangent vector $\xi\in\ttansp$ is defined and denoted by $\|\xi\|_{U}=\sqrt{g_U(\xi, \xi)}$. 
Based on the Euclidean gradient of $f$ in~\eqref{eq:def-egrad}, the Riemannian gradient of $f$ is, by definition, 
\begin{equation}\label{grad}
	% \text{grad}f_{i}(U) = \partial_{U^{(i)}}f(U)\left(((U^{(j)})^{\odot_{j\neq i}})^{\trp}((U^{(j)})^{\odot_{j\neq i}}) + \delta I_{R} \right)^{-1}.
	\text{grad}f(U) = \left(\partial_{U^{(1)}}f(U) \hmati[1]^{-1},\cdots,\partial_{U^{(k)}}f(U) \hmati[k]^{-1} \right).
\end{equation}
As mentioned in the beginning of this subsection, the Riemannian gradient~\eqref{grad} can be seen as the result of preconditioning of the Euclidean gradient $\nabla f(U)$ with the operator $\mathcal{H}(U)$ (upto a ``rescaling'' with $\delta I_{R}$). %
We refer to the new metric~\eqref{metric} as the {\it preconditioned metric} on $\man$.

\subsection{The Riemannian preconditioned algorithms}\label{ssec:algs}

With the gradient defined in~\eqref{grad} using Riemannian preconditioning, we adapt Riemannian gradient descent and Riemannian conjugate gradient algorithms (\eg,~\cite{AbsMahSep2008}) %
to solve the problem~\eqref{pb:main}. 

\begin{algorithm}[htbp]
\caption{Riemannian Gradient Descent (\textsc{RGD})}\label{alg:generic-rgd}
\begin{algorithmic}[1]
\REQUIRE{$f:\mtotal\mapsto\reals$,  $x_{0}\in\mtotal$, tolerance $\epsilon > 0$; $t=0$.}
\ENSURE{$x_t$.} 
\WHILE{$\|\trgrad[x_t]\|>\epsilon$}
\STATE{Set $\ttanv[\eta]_t=-\trgrad[x_t]$.\hfill\# See~\eqref{grad}}
\STATE{Set stepsize $s_t$ through one of the rules~\eqref{def:ls-linemin},~\eqref{eq:ls-armijo} or~\eqref{eq:stepsize_bb}.} 
\STATE{Update: $x_{t+1} = x_t + s_t\ttanv[\eta]_t$; $t\leftarrow t+1$.}
\ENDWHILE{}
\end{algorithmic}
\end{algorithm}

The Riemannian gradient algorithm is given in Algorithm~\ref{alg:generic-rgd}, which consists mainly of setting the descent direction as the negative Riemannian gradient and selecting the stepsizes. Note that since the search space is $\man=\prodsp$, the retraction map in this algorithm (line 4) is chosen as the identity map. %
\revv{In line~3, given an iterate $x_{t}\in\man$ and $\eta_{t}\in\tansp[x_{t}]\man$, the stepsize $s_t$ is chosen by one of the following three methods.} %

\paragraph{Stepsize by line minimization} The line minimization consists in computing a stepsize as follows, %
\begin{equation}\label{def:ls-linemin}
  s_t=\argmin_{s>0} h(s):= f(x_{t}+s\eta_t). 
\end{equation} 
With third-order tensors ($k=3$), the solution can be obtained numerically by selecting %
from the roots of the derivative $h'(s)$, 
which is a polynomial of degree $5$.  

\paragraph{Backtracking line search with the Armijo condition}
We first set up a trial stepsize~$s_t^{0}$ using
the classical strategy~\cite[\S3.4]{nocedal2006numerical}: %
(i) when $t\leq 1$, $s_{t}^{0} = 1$, (ii) when $t\geq 2$,
$s_{t}^{0} = 2(f(x_{t-1})- f(x_{t-2}))/g_{x_{t-1}}(\eta_{t-1}, \trgrad[x_{t-1}])$; 
then the stepsize $s_t$ is returned by a backtracking procedure, \ie,
finding the smallest integer $\ell\geq 0$ such that 
 \begin{equation}\label{eq:ls-armijo}
     f(x_{t}) - f\left(x_{t} + s_t \ttanv[\eta]_{t}\right) \geq \sigma s_t
     g_{x_{t}}(-\trgrad[x_{t}],\ttanv[\eta]_{t}), 
\end{equation}
for $s_t:= \max(s_t^{0}\beta^{\ell}, s_{\min})$ with a constant parameter $s_{\min}>0$. 
The backtracking parameters are fixed with $\sigma, \beta\in(0,1)$. 

\paragraph{The Barzilai--Borwein (BB) stepsize}
Recently, the Riemannian Barzilai--Borwein (RBB) stepsize~\cite{iannazzo2018riemannian} has proven to be an efficient stepsize rule for Riemannian gradient methods. Hence, we choose the stepsize as %
\begin{equation}\label{eq:stepsize_bb}
{s_t^{\mathrm{RBB}1}} := \frac{\|z_{t-1}\|_{x_{t}}^{2}}{| g_{x_{t}}(z_{t-1}, y_{t-1}) |}, \quad\mathrm{or} \quad
{s_t^{\mathrm{RBB}2}}  :=\frac{| g_{x_{t}}(z_{t-1},y_{t-1}) |}{\|y_{t-1}\|_{x_{t}}^{2}},
\end{equation}
where $z_{t-1} = x_{t} - x_{t-1}$ and $y_{t-1} = \trgrad[x_{t}] - \trgrad[x_{t-1}]$.

The Riemannian conjugate gradient (RCG) algorithm is 
similar to RGD (Algorithm~\ref{alg:generic-rgd}) in terms of %
the stepsize selection (line 3) and the update step (line 4), but differs with RGD in the choice of the search direction (line 2). %
More specifically, the search direction of RCG is defined as %
\begin{equation*}%\label{eq:dir-rcg}
\eta_{t} = -\rgrad[x_{t}] + \beta_t \eta_{t-1},  
\end{equation*}
where $\beta_t$ is the CG parameter. %
In the numerical experiments, we choose the Riemannian version~\cite{manopt} of the modified Hestenes--Stiefel rule~\cite{Hestenes&Stiefel:1952} (HS+) as follows, 
\begin{equation*}
     \beta_t = \max\left(0, \frac{g_{x_{t}}(\xi_{t}-\xi_{t-1},
                    \xi_{t})}{g_{x^t}(\xi_{t}-\xi_{t-1},\eta_{t-1})}\right). \label{eq:rncg-beta-hs} 
\end{equation*}
The vector transport operation involved in the computation of
$\beta_{t}$ is chosen to be the identity map. 

In both algorithms, the cost for calculating the Riemannian gradient~\eqref{grad} is a dominant term of the total cost. 
Therefore, we propose an efficient method for evaluating the Riemannian gradient in the next section.

\subsection{Computation of the gradient}\label{ssec:comp-grad}
We focus on the computation of the Riemannian gradient of $f$ in~\eqref{pb:main}. 
From the definition~\eqref{grad}, 
the computation of %
$\text{grad}f(U)=(\eta^{(1)},\dots,\eta^{(k)})$ consists of two parts: (i)
computing the partial derivatives $D_i:=\partial_{U^{(i)}}
f(U)\in\reals^{m_i\times R}$ and (ii) computing the matrix multiplications
$\eta^{(i)}=D_i \hmati^{-1}$. 
From the expressions~\eqref{eq:fparti} and~\eqref{def:mat-precon}, we have
\begin{align}
  D_i&= \frac{1}{p}\underbrace{\mathcal{S}_{(i)} (U^{(j)})^{\odot_{j\neq i}}}_{\tilde{D}_{i}}+\lambda U^{(i)},\label{eq:def-di-tdi}\\
    \eta^{(i)} &= D_i(\underbrace{(\ujexi)^{\trp}\ujexi+\delta
    I_{R}}_{\hmati})^{-1}.\label{eq:def-etai}
\end{align}
\revv{In a straightforward manner,} these two parts require mainly the following operations:
\begin{enumerate}
    \item Computing the sparse tensor $\mathcal{S}$ as
        in~\eqref{eq:fparti}, 
        which requires $2|\Omega|R$ flops;
    \item Computing $\ujexi$ for $i=1,\dots,k$, which requires 
        $2\sum_{i=1}^{k}m_{-i}R$ flops, where $m_{-i}:= \prod_{j\neq i} m_j$; 
    \item Forming the sparse matricizations $\mathcal{S}_{(i)}$ for $i=1,\dots,k$,
        which takes some extra time for input/output with the sparse tensor
        $\mathcal{S}$ and the matricizations of the index set $\Omega$. 
    \item Computing the sparse-dense matrix products
        $\tilde{D}_i:=\mathcal{S}_{(i)}\ujexi$, for $i=1,\dots,k$, which require $2k|\Omega|R$ flops. Then, one has access to $\{D_i\}_{i=1,\dots,k}$ after the matrix additions with $\lambda U^{(i)}$ (whose cost is not listed out since it is fixed regardless of the computational method). 
    \item Computing the $R$-by-$R$ matrix $\hmati$~\eqref{def:mat-precon} based on the matrix $\ujexi$ (obtained in step 2), which consists of a dense-dense matrix multiplication of sizes $R\times m_{-i}$ and $m_{-i}\times R$, for $i=1,\dots,k$, which mainly requires 
        $2\sum_{i=1}^{k}m_{-i} R^2$. 
    \item Computing $D_i\hmati^{-1}$ given $D_i$ (obtained in step 4) and $\hmati$ (obtained in step 5), through Cholesky
        decomposition of $\hmati$, for $i=1,\dots,k$, which requires
        $\sum_{i=1}^{k}2m_{i}R^2+\cchol R^3$. 
\end{enumerate}
The sum of the flops counted in the above list of operations is 
\begin{equation}\label{eq:cost-naive}
2(k+1)|\Omega|R + (\sum_{i=1}^{k} 2m_{-i}(R^2+R)) + (\sum_{i=1}^{k} 2m_{i}R^2 + \cchol R^3).
\end{equation}

\paragraph{An efficient computational method} We propose a computational method that avoids the matricizations of the residual tensor $\mathcal{S}$ and the computations of the Khatri--Rao products $\ujexi$. 

Given the residual tensor $\mathcal{S}$ after the step 1 above, we propose to compute $\tilde{D}_{i}$ in~\eqref{eq:def-di-tdi} without passing through the steps 2 and 3. 
In fact, the computation of $\tilde{D}_{i}=\mathcal{S}_{(i)} (U^{(j)})^{\odot_{j\neq i}}$ 
corresponds to the Matricized tensor times
Khatri--Rao product (MTTKRP), which is a common routine in the tensor computations. %
Through basic tensor computations, the entrywise expression of this MTTKRP %
does not require forming the matricizations %
of $\mathcal{S}$ explicitly. 
For brevity, we demonstrate these relations concretely in the case of third-order tensors ($k=3$), knowing that their extension to higher-order tensors is straightforward. 
The matrix $\tilde{D}_1 = \mathcal{S}_{(1)} (U^{(j)})^{\odot_{j\neq 1}}\in\reals^{m_1\times R}$ in~\eqref{eq:def-di-tdi} 
has the entrywise expression below, %
\begin{equation}\label{eq:mttkrp-ent}
    \left[\tilde{D}_{1}\right]_{i_1\ell} =  \revv{\sum_{i_2=1}^{m_2}\sum_{i_3=1}^{m_3}} \mathcal{S}_{i_1 i_2 i_3} U^{(3)}_{i_3\ell} U^{(2)}_{i_2\ell},
\end{equation}
for $(i_1,i_2,i_3)\in\irange[m_1]\times\dots\times\irange[m_3]$ and
$\ell=1,\dots,R$. 
Based on~\eqref{eq:mttkrp-ent}, Algorithm~\ref{algo:mttkrp} presents an efficient way to compute the MTTKRPs of~\eqref{eq:def-di-tdi}, with a sparse residual tensor $\mathcal{S}$ as input. Note that 
the computations of $\tilde{D}_{2}$ and $\tilde{D}_{3}$ correspond to the same equation~\eqref{eq:mttkrp-ent} but with the indices $(i_1,i_2,i_3)$ swapped via the rotations $(1,2,3;2,3,1)$ and $(1,2,3; 3,1,2)$ respectively. 

\begin{algorithm}[!htbp]
\caption{Sparse MTTKRP}\label{algo:mttkrp}
\begin{algorithmic}[1]
\REQUIRE{The index sets by axis $I_{\Omega}:=\{i: (i,j,k)\in\Omega\}$, $J_{\Omega}$ and $K_{\Omega}$. The sparse tensor $\mathcal{S}$ in the form of a $|\Omega|$-by-$1$ array of observed entries $\{S_{p}=\mathcal{S}_{i_{p},j_{p},k_{p}}: (i_{p},j_{p},k_{p})\in\Omega\}$. The factor matrices $(U^{(i)})_{i=1,2,3}$.}
\ENSURE{$\tilde{D}:= \mathcal{S}_{(1)}U^{(3)}\odot U^{(2)}$.} 
\STATE $\tilde{D} = \mathbf{0}$. 
\FOR{$p=1,\dots,|\Omega|$}
\FOR{$\ell=1,\dots,R$}
\STATE $\tilde{D}_{i_{p}\ell} =  \tilde{D}_{i_{p}\ell} + \mathcal{S}_{p} U^{(3)}_{k_{p}\ell} U^{(2)}_{j_{p}\ell}$. 
\ENDFOR
\ENDFOR
\end{algorithmic}
\end{algorithm}

Subsequently, we propose to compute $\hmati$~\eqref{def:mat-precon} without
large matrix multiplications. In fact, the large matrix multiplications with
$\ujexi$ in~\eqref{def:mat-precon} can be decomposed into smaller ones. Note
that these matrix multiplications satisfy the following identity:
\begin{equation}\label{eq:comp-khatri-rao-prod}
  \left( (U^{(j)})^{\odot_{j\neq
    i}}\right)^{\trp}\left((U^{(j)})^{\odot_{j\neq i}}\right) = G_k\star\cdots
    G_{i+1}\star G_{i-1}\star\cdots\star G_1,
\end{equation} 
where $G_j := {U^{(j)}}^{\trp}U^{(j)}$ for $j\neq i$ and the product by $\star$
denotes the Hadamard product. %
Using this property, the computation of $\hmati$ reduces to computing
$G_j={U^{(j)}}^{\trp}U^{(j)}$, and then the entrywise
multiplications between the (small) $R$-by-$R$ matrices $\{G_j\}$, 
which require only $\sum_{i=1}^{k}(2m_i+k-1)R^2$ flops, since the computation of
the matrices $G_j$ cost $2\sum_{i=1}^{k}m_iR^2$ and the entrywise
multiplications between $G_j$ cost $(k-1)R^2$. 

\revv{In summary, the operations reduce to the following steps.}
\begin{enumerate}
    \item[a.] Computing the sparse tensor $\mathcal{S}$ as
        in~\eqref{eq:fparti}. 
        This is identical to the step 1 above, which requires $2|\Omega|R$ flops; 
    \item[b.] Computing $\tilde{D}_i:=\mathcal{S}_{(i)}\ujexi$, for
        $i=1,\dots,k$, using $\mathcal{S}$ (obtained in step a) and $U$; see
        Algorithm~\ref{algo:mttkrp}. The computational cost of this step is
        $2k|\Omega|R$. Then, one has access to $\{D_i\}_{i=1,\dots,k}$ after the matrix additions with $\lambda U^{(i)}$. 
    \item[c.] Computing the $R$-by-$R$ matrix $\hmati$~\eqref{def:mat-precon}
        using $U$ (input data); see~\eqref{eq:comp-khatri-rao-prod}. The
        computational cost of this step is $\sum_{i=1}^{k} (2m_i+k-1)R^2$. 
    \item[d.] Computing $D_i\hmati^{-1}$ given $D_i$ (obtained in step b) and $\hmati$ (obtained in step c), through Cholesky
        decomposition of $\hmati$, for $i=1,\dots,k$. This is identical to the step 6 above, which requires $\sum_{i=1}^{k}2m_{i}R^2+\cchol R^3$
        flops.
\end{enumerate}
Therefore, the total cost of the above steps is
\[
2(k+1)|\Omega|R + \sum_{i=1}^{k} (4m_i +k-1)R^2  + \cchol R^3,
\]
which is significantly reduced compared to the cost~\eqref{eq:cost-naive} of the naive method. 
In particular, for 
third-order (or a bit higher-order) tensors ($k\ll m_i$) with a low rank parameter $R$, %
the dominant term in~\eqref{eq:cost-naive} %
is $2(k+1)|\Omega|R + \sum_{i=1}^{k} (2m_{-i}+2m_i)R^2$, 
while the dominant term in the cost of the proposed method is $2(k+1)|\Omega|R + \sum_{i=1}^{k} 4m_{i}R^2$. 
The reduction in the cost can be seen from the fact that $m_i\ll m_{-i}=\prod_{j\neq i}m_j$ and $m_i\ll |\Omega|=p m_1\dots m_k$.
Note that on top of the above reduction in flops, the time efficiency is further improved as the matricizations of the residual tensor $\mathcal{S}$ are not needed. %
In particular, speedups related to the step b (instead of steps 2--4 in the naive method) are demonstrated in Table~\ref{tab:comp-egrad-matlabvsmex}. %

\section{Convergence analysis}\label{sec:convergence} 
In this section, we analyze the    
convergence behavior of 
Algorithm~\ref{alg:generic-rgd}. Let $\algseq$ denote the sequence generated by this algorithm. %
First, we demonstrate in Proposition~\ref{prop:conv-limpt} that %
every accumulation point %
of $\algseq$ is a stationary point. Second, we analyze the iterate convergence property of the algorithm in Theorem~\ref{thm:main-rgd}.  

The following lemma generalizes the class of functions with Lipschitz-continuous gradient to functions defined on Riemannian manifolds, and will be used in Lemma~\ref{lemm:suff-desc}. 
\begin{lemma}[{\cite[Lemma~2.7]{Boumal2018}}]\label{lemm:lipschitz}
Let $\man'\subset\man$ be a compact Riemannian submanifold. Let
$\retr[x]:\tansp[x]\man'\mapsto\man'$. If
$f:\man'\mapsto\reals$ has Lipschitz continuous gradient in the convex
hull of $\sls$. Then there exists $L > 0$ such that, for all
$x\in\man'$ and $\xi\in\tansp[x]\man'$, 
\begin{equation}
    \label{eq:f-lipschitz}
    |f(\retr[x](\xi)) - (f(x)+ g_{x}(\xi,\rgrad[x])| \leq \frac{L}{2}
    \|\xi\|^{2}_{x}.
\end{equation}
\end{lemma}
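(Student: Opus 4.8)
The statement is quoted from \cite{Boumal2018}; the plan here is to prove it by reducing the bound to a one-dimensional Taylor estimate along the retraction curve and closing it with a compactness argument. Throughout I regard $\man'$ as an embedded submanifold of an ambient Euclidean space $\reals^{N}$ and denote by $\nabla f$ a Euclidean representative of the gradient of $f$; by hypothesis $\nabla f$ is $\bar L$-Lipschitz on the convex hull of $\sls$, a set containing every ambient segment joining two points of $\man'$ — this is precisely why the hypothesis is stated with the convex hull rather than with $\man'$ itself.

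First I would dispose of the regime where $\xi$ is large. Since $\man'$ is compact, $M:=\sup_{\man'}|f|$ and $G:=\sup_{x\in\man'}\|\rgrad[x]\|_{x}$ are finite, and $\retr[x](\xi)\in\man'$ for every $x\in\man'$ and $\xi\in\tansp[x]\man'$; hence the left-hand side of~\eqref{eq:f-lipschitz} is at most $2M+G\|\xi\|_{x}$, which is $\le\tfrac{L}{2}\|\xi\|_{x}^{2}$ as soon as $\|\xi\|_{x}\ge\rho$, for a fixed $\rho>0$ and $L$ large enough. It thus remains to prove the inequality on the compact set $\mathcal{B}=\{(x,\xi):x\in\man',\ \|\xi\|_{x}\le\rho\}$.

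Next, fix $(x,\xi)\in\mathcal{B}$ and let $c(t)=\retr[x](t\xi)$ for $t\in[0,1]$, so that $c$ takes values in $\man'$, $c(0)=x$, and $\dot c(0)=\xi$ because $\retr[x]$ is a first-order retraction. Since $\|t\xi\|_{x}\le\rho$ on $[0,1]$ and $\retr$ is smooth, its first two derivatives are bounded on the compact set $\{(x,\eta):\|\eta\|_{x}\le\rho\}$, which provides constants $\alpha,\beta>0$ independent of $(x,\xi)$ such that $\|\dot c(t)\|\le\alpha\|\xi\|_{x}$ and $\|\ddot c(t)\|\le\beta\|\xi\|_{x}^{2}$, hence $\|c(t)-x\|\le\alpha t\|\xi\|_{x}$ and $\|\dot c(t)-\dot c(0)\|\le\beta t\|\xi\|_{x}^{2}$. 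Writing $\phi(t)=f(c(t))$ and using $\phi'(t)=\langle\nabla f(c(t)),\dot c(t)\rangle$, the fundamental theorem of calculus gives
\begin{equation*}
\phi(1)-\phi(0)-\phi'(0)=\int_{0}^{1}\big\langle\nabla f(c(t))-\nabla f(x),\,\dot c(t)\big\rangle\,\mathrm{d}t+\Big\langle\nabla f(x),\,\int_{0}^{1}\big(\dot c(t)-\dot c(0)\big)\,\mathrm{d}t\Big\rangle ,
\end{equation*}
whose first term is at most $\bar L\int_{0}^{1}\|c(t)-x\|\,\|\dot c(t)\|\,\mathrm{d}t\le\tfrac12\bar L\alpha^{2}\|\xi\|_{x}^{2}$ and whose second term is at most $G'\int_{0}^{1}\|\dot c(t)-\dot c(0)\|\,\mathrm{d}t\le\tfrac12 G'\beta\|\xi\|_{x}^{2}$, with $G'=\sup_{\man'}\|\nabla f\|$. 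Since $\phi(1)=f(\retr[x](\xi))$, $\phi(0)=f(x)$ and $\phi'(0)=\langle\nabla f(x),\xi\rangle=\dop f(x)[\xi]=g_{x}(\xi,\rgrad[x])$, the left-hand side of this display equals the quantity inside the absolute value in~\eqref{eq:f-lipschitz}; adding the two bounds (and absorbing, if necessary, the constant comparing $\|\cdot\|_{x}$ with the ambient norm on the compact $\man'$) yields~\eqref{eq:f-lipschitz} on $\mathcal{B}$ with $L_{0}=\bar L\alpha^{2}+G'\beta$, and taking $L=\max\{L_{0},L_{1}\}$ with $L_{1}$ the constant produced by the truncation step completes the proof.

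The step I expect to require the most care is the uniform, correctly-scaled control of the retraction, namely $\|\dot c(t)\|\le\alpha\|\xi\|_{x}$ and $\|\ddot c(t)\|\le\beta\|\xi\|_{x}^{2}$ with $\alpha,\beta$ independent of the base point: this is exactly what the truncation to $\|\xi\|_{x}\le\rho$ secures, since it restricts both $x$ and the argument $t\xi$ to a compact set on which smoothness of $\retr$ gives uniform derivative bounds. A secondary point is to invoke the Lipschitz-gradient hypothesis only along ambient segments $[x,c(t)]$ with endpoints in $\man'$, which lie in the convex hull of $\sls$ as required. I note finally that in the application later in the paper $\man'$ is a compact subset of the linear space $\man=\prodsp$ and $\retr[x]=\mathrm{id}$, in which case $c(t)=x+t\xi$, the curvature terms vanish, and the argument collapses to the classical descent lemma restricted to $\man'$.
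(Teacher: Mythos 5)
The paper gives no proof of this lemma: it is imported verbatim from \cite{Boumal2018}, so there is no internal argument to compare against. Your proof is correct and is essentially the standard derivation of that cited result — truncate to $\|\xi\|_{x}\le\rho$ using compactness of $\man'$ and boundedness of $f$ and of its gradient, then bound the first-order Taylor remainder along $t\mapsto\retr[x](t\xi)$ via uniform bounds on the first two derivatives of the (smooth) retraction together with the Lipschitz property of $\nabla f$ on the convex hull; the splitting of $\phi(1)-\phi(0)-\phi'(0)$ into the two integral terms and the identification $\phi'(0)=\dop f(x)[\xi]=g_{x}(\xi,\rgrad[x])$ are both sound, and your closing remark that the norm equivalence between $g_{x}$ and the ambient norm must be absorbed into the constants (and that the lemma collapses to the classical descent lemma in the paper's actual application, where $\man$ is linear and the retraction is the identity) correctly addresses the only delicate points.
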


Starting from the above lemma, we show that the proposed algorithm ensures a sufficient decrease property at each iteration. 

\begin{lemma}\label{lemm:suff-desc}
Let $\algseq$ be the sequence generated by Algorithm~\ref{alg:generic-rgd} (RGD), in which the step sizes are chosen by either line minimization~\eqref{def:ls-linemin} or Armijo line search~\eqref{eq:ls-armijo}. For all $t\geq 0$, there exists $\bar{C}>0$ such that %
\begin{equation}
    \label{eq:suff-desc}
    f(x_{t}) - f(x_{t+1}) \geq \bar{C} \|\rgrad[x_{t}]\|^{2}_{x_{t}}.
\end{equation}
In particular, with the step sizes chosen by line minimization~\eqref{def:ls-linemin}, there exists a Lipschitz-like constant $L_0>0$ such that~\eqref{eq:suff-desc} holds for $\bar{C} = \frac{1}{2L_0}$. %
\end{lemma}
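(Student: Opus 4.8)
The plan is to establish \eqref{eq:suff-desc} separately for the two step-size rules, in both cases by first restricting the analysis to a compact set and then applying Lemma~\ref{lemm:lipschitz}. The starting observation is that, since $\lambda>0$, the regularizer forces the sublevel set $\sls := \{U\in\man : f(U)\leq f(x_0)\}$ to be a compact subset of $\man$; the RGD iterates remain in $\sls$ once we know the objective is non-increasing, so I would work on a compact submanifold $\man'$ containing $\sls$ (say a closed ball slightly larger than $\sls$ to accommodate the line-search trials). On $\man'$, $f$ has Lipschitz-continuous gradient because it is a polynomial in the factor entries restricted to a bounded set, so Lemma~\ref{lemm:lipschitz} yields a constant $L>0$ with $|f(\retr[x](\xi)) - (f(x) + g_x(\xi,\rgrad[x]))| \leq \tfrac{L}{2}\|\xi\|_x^2$ for all $x\in\man'$, $\xi\in\tansp[x]\man'$; here the retraction is the identity map.

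For the line-minimization rule \eqref{def:ls-linemin}: with $\eta_t = -\rgrad[x_t]$, the Lipschitz inequality gives $f(x_t + s\eta_t) \leq f(x_t) - s\|\rgrad[x_t]\|_{x_t}^2 + \tfrac{L_0 s^2}{2}\|\eta_t\|_{x_t}^2 = f(x_t) - s\|\rgrad[x_t]\|_{x_t}^2 + \tfrac{L_0 s^2}{2}\|\rgrad[x_t]\|_{x_t}^2$, where I write $L_0$ for the Lipschitz-like constant on the relevant compact set. Minimizing the right-hand side over $s$ gives the bound $f(x_t + s\eta_t) \leq f(x_t) - \tfrac{1}{2L_0}\|\rgrad[x_t]\|_{x_t}^2$ at $s = 1/L_0$; since $x_{t+1}$ is the exact minimizer of $s\mapsto f(x_t+s\eta_t)$ over $s>0$, it does at least as well, which is precisely \eqref{eq:suff-desc} with $\bar C = \tfrac{1}{2L_0}$. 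This also proves the "in particular" clause and confirms inductively that $x_{t+1}\in\sls\subset\man'$, closing the argument that the iterates never leave the compact set.

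For the Armijo rule \eqref{eq:ls-armijo}: here the accepted step $s_t = \max(s_t^0\beta^\ell, s_{\min})$ satisfies, by construction, $f(x_t) - f(x_{t+1}) \geq \sigma s_t \|\rgrad[x_t]\|_{x_t}^2$ (using $g_{x_t}(-\rgrad[x_t], \eta_t) = \|\rgrad[x_t]\|_{x_t}^2$). It remains to lower-bound $s_t$ by a positive constant independent of $t$: either $s_t = s_{\min}$, or the backtracking stopped because the trial $s_t/\beta$ failed the Armijo test, and plugging that failed test into the Lipschitz inequality \eqref{eq:f-lipschitz} forces $s_t/\beta \geq 2(1-\sigma)/L_0$, i.e.\ $s_t \geq 2\beta(1-\sigma)/L_0$. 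Hence $s_t \geq \min\{s_{\min},\, 2\beta(1-\sigma)/L_0\} =: \underline{s} > 0$, and \eqref{eq:suff-desc} holds with $\bar C = \sigma\underline{s}$. Taking $\bar C$ to be the smaller of the two constants covers both rules.

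The main obstacle is the compactness bookkeeping: Lemma~\ref{lemm:lipschitz} requires a fixed compact submanifold chosen in advance, but a priori the iterates (and especially the line-search trial points $x_t + s\eta_t$ for large $s$) need not stay bounded. The resolution is the standard chicken-and-egg induction — assume $x_t\in\sls$, note any trial point producing a decrease stays in $\sls$ while non-improving trials are rejected, derive the descent inequality on a fixed slightly-enlarged compact set that contains all trial points actually evaluated, conclude $x_{t+1}\in\sls$ — but it must be set up carefully so that the Lipschitz constant $L_0$ is genuinely uniform in $t$. A minor secondary point is checking that the Armijo initial trial $s_t^0$ (which involves a ratio of function-value differences) together with the floor $s_{\min}$ keeps the evaluated points in the enlarged compact set; this follows because $s_t \leq s_t^0$ and one can bound $s_t^0$ using the decrease already achieved at step $t-1$, or simply cap the analysis at the first trial index for which the point lies in the enlarged set.
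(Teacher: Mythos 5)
Your proposal is correct and follows essentially the same route as the paper's proof: coercivity of $f$ gives a bounded sublevel set, Lemma~\ref{lemm:lipschitz} supplies the quadratic upper bound, minimizing it over $s$ yields $\bar{C}=\tfrac{1}{2L_0}$ for line minimization, and the Armijo case reduces to lower-bounding $s_t$ (the paper simply invokes the floor $s_t\geq s_{\min}$, which your $\min\{s_{\min},2\beta(1-\sigma)/L_0\}$ bound subsumes). Your extra care about keeping line-search trial points inside a fixed compact set is a reasonable refinement the paper glosses over, but it does not change the argument.
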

\begin{proof}
Due to the fact that the objective function is coercive (because of the
Frobenius norm-based terms), the sublevel set %
$\sls=\{x\in\man: f(x) \leq f(x_{0}) \}$ %
is a closed and bounded subset of $\man$. %
Due to the boundedness of $\sls$, the convex hull of $\sls$, denoted as $\bar{\sls}$, is bounded. Therefore, $f$ has Lipschitz continuous gradient in $\bar{\sls}$ since $f\in C^2(\man)$. From Lemma~\ref{lemm:lipschitz}, there exists a Lipschitz-like constant $L_0>0$ such that~\eqref{eq:f-lipschitz} holds. The inequality~\eqref{eq:f-lipschitz} ensures an upper bound of $f(\mathcal{R}_{x}(s\xi))=f(x+s\xi)$ as follows, %
$f(x+s\xi) \leq f(x) +g_{x}(s\xi,\rgrad[x]) + \frac{L_0}{2} \|s\xi\|^2_{x}$, %
for all $s\geq0$. %
Consequently, when the stepsize $s_t=s^*$ is selected by line minimization~\eqref{def:ls-linemin}, we have 
\begin{align*}
    f(x_{t+1})&=f(x_{t}-s^*\rgrad[x_{t}]) \\
    &\leq \min_{s\geq 0} \left(f(x) - s(1-\frac{L_0 s}{2}) \|\rgrad[x_{t}]\|^2_{x_{t}}\right) = f(x_{t})-\bar{C}\|\rgrad[x_{t}]\|_{x_{t}}^2, 
\end{align*}
with $\bar{C}=\frac{1}{2L_0}$. 
When the stepsize $s_t$ is selected using Armijo line search, %
the new iterate $x_{t+1}=x_{t}-s_t\rgrad[x_{t}]$ is an Armijo point, where $s_t\geq s_{\min}>0$, by construction of the line search procedure (with the parameter of lower bound of stepsizes $s_{\min}$). Hence, through~\eqref{eq:ls-armijo}, we have 
\begin{align*}
f(x_t)-f(x_{t+1}) &\geq \sigma s_t \|\rgrad[x_t]\|_{x_t}^2\geq\bar{C} \|\rgrad[x_t]\|_{x_t}^2,
\end{align*}
where $\bar{C}=\sigma s_{\min}>0$, with the line search parameter $\sigma\in (0,1)$. 

In conclusion, the sufficient decrease property is satisfied with the two
stepsize selection methods in the statement.
\end{proof}

\begin{proposition}\label{prop:conv-limpt}
The sequence $\algseq$ generated by Algorithm~\ref{alg:generic-rgd}, with step
sizes chosen by either line minimization~\eqref{def:ls-linemin} or 
Armijo line search~\eqref{eq:ls-armijo}, satisfies the following convergence properties: 
(i) Every accumulation point is a stationary point; 
(ii) The algorithm needs at most $\ceil{\frac{(f^{*}-
f(x_0))}{\bar{C}}\frac{1}{\epsilon^2}}$ iterations to reach an
$\epsilon$-stationary solution, for a constant $\bar{C} >0$. 
\end{proposition}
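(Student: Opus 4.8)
The plan is to derive both statements from the sufficient-decrease inequality~\eqref{eq:suff-desc} of Lemma~\ref{lemm:suff-desc} by a standard telescoping argument, using only that $f$ is bounded below. First I would note that $f\ge 0$ (being a sum of squared Frobenius norms), so $f^{*}:=\inf_{x\in\man}f(x)$ is finite, and that~\eqref{eq:suff-desc} makes $\{f(x_t)\}_{t\ge 0}$ nonincreasing; hence $x_t\in\sls:=\{x\in\man:f(x)\le f(x_0)\}$ for every $t$, and $\sls$ is compact as already established in the proof of Lemma~\ref{lemm:suff-desc}. Summing~\eqref{eq:suff-desc} over $t=0,\dots,T-1$ telescopes to
\[
\sum_{t=0}^{T-1}\|\rgrad[x_t]\|_{x_t}^{2}\;\le\;\frac{1}{\bar{C}}\bigl(f(x_0)-f(x_T)\bigr)\;\le\;\frac{f(x_0)-f^{*}}{\bar{C}},
\]
a bound independent of $T$; letting $T\to\infty$ shows the series $\sum_{t}\|\rgrad[x_t]\|_{x_t}^{2}$ converges, so in particular $\|\rgrad[x_t]\|_{x_t}\to 0$.

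For part~(i), I would take an accumulation point $x^{*}$ with $x_{t_j}\to x^{*}$ along a subsequence, and argue that the Riemannian gradient map $U\mapsto\rgrad[U]$ given by~\eqref{grad} is continuous on $\man$: since $\man$ is a linear space and $f\in C^{2}$, the Euclidean partial derivatives $\partial_{U^{(i)}}f$ are continuous, while each matrix $\hmati$ in~\eqref{def:mat-precon} depends smoothly on $U$ and is symmetric positive definite for every $U$ precisely because $\delta>0$, so $U\mapsto\hmati^{-1}$ is continuous too. Combining this with the joint continuity of $(U,\xi)\mapsto\|\xi\|_{U}=\sqrt{g_{U}(\xi,\xi)}$ (the metric $g$ being smooth) and passing to the limit along $\{t_j\}$ yields $\|\rgrad[x^{*}]\|_{x^{*}}=\lim_j\|\rgrad[x_{t_j}]\|_{x_{t_j}}=0$, hence $\rgrad[x^{*}]=0$ and $x^{*}$ is a stationary point.

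For part~(ii), I would use the telescoped bound in contrapositive form: if the stopping test has not fired by iteration $T$, i.e.\ $\|\rgrad[x_t]\|_{x_t}>\epsilon$ for $t=0,\dots,T-1$, then $T\epsilon^{2}<\sum_{t=0}^{T-1}\|\rgrad[x_t]\|_{x_t}^{2}\le (f(x_0)-f^{*})/\bar{C}$, so $T<(f(x_0)-f^{*})/(\bar{C}\,\epsilon^{2})$. Hence the algorithm reaches an $\epsilon$-stationary iterate within $\ceil{(f(x_0)-f^{*})/(\bar{C}\,\epsilon^{2})}$ iterations, which is the bound claimed in the statement (read with the ordering $f^{*}\le f(x_0)$), with $\bar{C}$ the constant provided by Lemma~\ref{lemm:suff-desc}.

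I do not anticipate a genuine obstacle; the only point requiring care is the continuity of $U\mapsto\rgrad[U]$, where both the Euclidean gradient and the preconditioner $\hmati^{-1}$ move with the base point, so one must invoke the invertibility of $\hmati$ at every $U\in\man$ — which is exactly why the shift $\delta I_{R}$ was introduced in~\eqref{def:mat-precon} — and one must keep track that the constant $\bar{C}$ from Lemma~\ref{lemm:suff-desc} is legitimately uniform along the trajectory because $\{x_t\}$ stays in the compact sublevel set $\sls$.
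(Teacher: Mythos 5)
Your proof is correct and follows essentially the same route as the paper's: telescoping the sufficient-decrease inequality of Lemma~\ref{lemm:suff-desc} to get summability of $\|\rgrad[x_t]\|_{x_t}^2$ for part~(i) and the contrapositive counting bound for part~(ii). The only (welcome) difference is that you telescope over the full sequence and spell out the continuity of $U\mapsto\rgrad[U]$ (via the positive definiteness of $\hmati$ guaranteed by the shift $\delta I_R$), a step the paper's proof leaves implicit when passing from $\|\rgrad[x_{k(t)}]\|_{x_{k(t)}}\to 0$ to stationarity of the accumulation point.
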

\begin{proof}
(i) Let $x_{*}\in\man$ be an accumulation point, then there exists a subsequence $(x_{k(t)})_{t\geq 0}$, where $\{ k(t): t\geq 0 \}\subset \mathbb{N}$, such that $\lim_{t\to\infty}(f(x_{k(t)})- f(x_{*})) = 0$. This entails that %
$\sum_{t=0}^{\infty} f(x_{k(t)}) - f(x_{k(t+1)}) = f(x_{k(0)})-f(x_{*}) < \infty$. 
  Applying the sufficient decrease property~\eqref{eq:suff-desc} of Lemma~\ref{lemm:suff-desc} to this previous inequality, we have 
  \begin{equation}\label{eq:ineq1-prop4.3}
  \sum_{t=0}^{\infty} \bar{C}\|\trgrad[x_{k(t)}]\|^2_{x_{k(t)}}\leq \sum_{t=0}^{\infty} f(x_{k(t)}) - f(x_{k(t+1)}) <\infty,
  \end{equation}
  for a constant $\bar{C}>0$. %
Therefore, $\lim\limits_{t\to\infty}\|\trgrad[x_{k(t)}]\|_{x_{k(t)}}=0$.  
(ii) Suppose that the algorithm does not attain an $\epsilon$-stationary point (a point on which the gradient norm is bounded by $\epsilon$) at iteration $T-1$, then $\|\trgrad[x_{t}]\|>\epsilon$, for all $0\leq t\leq T-1$. Using~\eqref{eq:suff-desc}, %
we have %
      $f(x_{0})- f(x_{T}) \geq \bar{C}\sum_{t=0}^{T-1}\|\trgrad[x_{t}]\|^{2}_{x_{t}}
      \geq \bar{C}\epsilon^{2}T$. %
  Therefore $T\leq \frac{f(x_{0})-f(x_{*})}{\bar{C}}\frac{1}{\epsilon^{2}}$. 
\end{proof}

Next, we prove the iterate convergence of the RGD algorithm in Theorem~\ref{thm:main-rgd} using the {\L}ojasiewicz property. 
We first give the definition of the {\L}ojasiewicz inequality for functions defined on a Riemannian manifold~\cite{schneider2015convergence}. %
\begin{definition}[{\L}ojasiewicz inequality~{\cite[Definition 2.1]{schneider2015convergence}}]\label{def:loj-ineq}
    Let $\mathcal{M}\subset\reals^{n}$ be a Riemannian submanifold of
    $\reals^{n}$. The function $f:\mathcal{M}\mapsto\reals$ satisfies a
    {\L}ojasiewicz gradient inequality at a point $x\in\mathcal{M}$, if
    there exists $\delta>0$, $\sigma>0$ and $\theta\in(0,1/2]$ such that
    for all $y\in\mathcal{M}$ with $\|y-x\|\leq \delta$, it holds that
    \begin{equation}
        \label{eq:loj-ineq}
        | f(x) - f(y) |^{1-\theta} \leq \sigma \| \rgrad[y]\|,
    \end{equation}
    where $\theta$ is called the {\L}ojasiewicz exponent. 
\end{definition}

The Proposition 2.2 of~\cite{schneider2015convergence} guarantees that~\eqref{eq:loj-ineq} is satisfied for real analytic functions defined on an analytic manifold. Since the objective function of~\eqref{pb:main} is indeed real analytic and that the search space $\man$ is an analytic manifold, the {\L}ojacisiewicz inequality~\eqref{eq:loj-ineq} holds. Consequently, we have the following iterate convergence result.

\begin{theorem}\label{thm:main-rgd}
    Let $\algseq$ be the sequence generated by Algorithm~\ref{alg:generic-rgd} with stepsizes chosen by either line minimization~\eqref{def:ls-linemin} or Armijo line search~\eqref{eq:ls-armijo}. %
    Then $\algseq$ converges to a stationary point $x_{*}\in\man$.
    Moreover, the local convergence rate of $\algseq$ follows:
    \begin{equation*}
    \label{eq:loj-convrate}
    \|x_{t} - x_{*} \|\leq C \twopartdef{e^{-c t}}{\theta =
    1/2,}{t^{-\theta/(1-2\theta)}}{otherwise,}
    \end{equation*}
    with the {\L}ojacisiewicz exponent $\theta\in(0, 1/2]$ and constants $c>0$ and $C>0$.
\end{theorem}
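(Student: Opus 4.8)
The plan is to follow the standard {\L}ojasiewicz argument for descent sequences on analytic manifolds (as in~\cite{schneider2015convergence}), adapted to the preconditioned metric~\eqref{metric}. First I would collect the facts already available: by coercivity of $f$ the whole sequence $\algseq$ stays in the compact sublevel set $\sls=\{x\in\man:f(x)\le f(x_0)\}$, the values $f(x_t)$ decrease monotonically to a limit $f^{\star}$, and by Proposition~\ref{prop:conv-limpt} the sequence has at least one accumulation point $x_{\star}\in\sls$, which is stationary and satisfies $f(x_{\star})=f^{\star}$ by continuity. If $f(x_{t_0})=f^{\star}$ for some finite $t_0$, then Lemma~\ref{lemm:suff-desc} forces $\rgrad[x_{t_0}]=0$, the update $x_{t+1}=x_t+s_t\eta_t$ leaves the iterate unchanged from then on, and the statement is trivial; so from now on I assume $f(x_t)>f^{\star}$ and $\rgrad[x_t]\neq0$ for all $t$.

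The heart of the proof is to show $\sum_t\|x_{t+1}-x_t\|<\infty$, which makes $\algseq$ Cauchy and hence convergent to its (then necessarily unique) accumulation point $x_{\star}$. I would work along the tail of the sequence that lies in a {\L}ojasiewicz ball $\{y\in\man:\|y-x_{\star}\|\le\delta\}$ supplied by Definition~\ref{def:loj-ineq}, whose hypotheses hold because $f$ is real analytic and $\man$ is an analytic manifold, and combine three ingredients: (i) the sufficient decrease of Lemma~\ref{lemm:suff-desc}, in the sharper ``primary descent'' form $f(x_t)-f(x_{t+1})\ge\sigma\,\|x_{t+1}-x_t\|_{x_t}\,\|\rgrad[x_t]\|_{x_t}$, which for the Armijo rule is immediate from~\eqref{eq:ls-armijo} since $\eta_t=-\rgrad[x_t]$ and $\|x_{t+1}-x_t\|_{x_t}=s_t\|\rgrad[x_t]\|_{x_t}$; (ii) the {\L}ojasiewicz inequality $(f(x_t)-f^{\star})^{1-\theta}\le\sigma_L\|\rgrad[x_t]\|$; and (iii) the uniform equivalence of the preconditioned norm $\|\cdot\|_x$ with the ambient Euclidean norm over $\sls$, which holds because $g$ is smooth and the eigenvalues of the matrices $\hmati$ in~\eqref{def:mat-precon} are pinched between $\delta>0$ and a constant that is uniform over the compact set $\sls$. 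Writing $\Delta_t:=(f(x_t)-f^{\star})^{\theta}$ and using concavity of $s\mapsto s^{\theta}$ to get $\Delta_t-\Delta_{t+1}\ge\theta(f(x_t)-f^{\star})^{\theta-1}(f(x_t)-f(x_{t+1}))$, substituting (i)--(iii) yields $\Delta_t-\Delta_{t+1}\ge c_0\|x_{t+1}-x_t\|$ for a constant $c_0>0$, and telescoping over the tail gives summability. A standard ``capture'' argument — starting from a subsequence that enters the ball of radius $\delta/2$, using $\Delta_t\to0$, and inductively applying the one-step bound — shows that once the iterates enter the ball of radius $\delta$ they never leave it, so the estimate is self-consistent.

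For the convergence rate I would pass to the scalar error $e_t:=f(x_t)-f^{\star}$. Combining Lemma~\ref{lemm:suff-desc} with the {\L}ojasiewicz inequality and the norm equivalence gives the one-step recursion $e_{t+1}\le e_t-\kappa\,e_t^{2(1-\theta)}$ for some $\kappa>0$. When $\theta=1/2$ this is a linear contraction $e_{t+1}\le(1-\kappa)e_t$ (or finite termination if $\kappa\ge1$), hence $e_t=O(e^{-c't})$; when $\theta\in(0,1/2)$ the exponent $2(1-\theta)>1$ and an elementary lemma on recursions of the form $e_{t+1}\le e_t-\kappa e_t^{\alpha}$ with $\alpha>1$ gives $e_t=O(t^{-1/(2(1-\theta)-1)})=O(t^{-1/(1-2\theta)})$. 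Finally the tail-sum bound $\|x_t-x_{\star}\|\le\sum_{j\ge t}\|x_{j+1}-x_j\|\le c_0^{-1}\Delta_t=c_0^{-1}e_t^{\theta}$ transfers these decay rates to $\|x_t-x_{\star}\|$, producing the stated dichotomy with constants $c,C>0$.

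I expect the main obstacle to be establishing the primary-descent inequality~(i), that is, controlling the step length in terms of the gradient and the achieved decrease. For the Armijo rule this is free, as noted. For the exact line-minimization rule~\eqref{def:ls-linemin} it is more delicate: I would argue that, since $f$ is coercive, the minimizer $s_t^{\star}$ of $h(s)=f(x_t+s\eta_t)$ is attained and keeps $x_{t+1}$ in $\sls$, and then use the Lipschitz-type bound of Lemma~\ref{lemm:lipschitz} together with $C^2$-smoothness of $f$ on the bounded convex hull of $\sls$ to sandwich $s_t^{\star}$ and recover a usable comparison between $f(x_t)-f(x_{t+1})$, $\|x_{t+1}-x_t\|$, and $\|\rgrad[x_t]\|$; this is the step where the specifics of the two step-size rules genuinely enter.
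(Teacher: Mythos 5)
Your proposal is correct in substance and rests on exactly the same two pillars as the paper's proof, but it takes a more self-contained route. The paper's argument is essentially a two-step verification: it upgrades the sufficient decrease of Lemma~\ref{lemm:suff-desc} to the ``primary descent'' inequality $f(x_t)-f(x_{t+1})\geq\kappa_0\|x_{t+1}-x_t\|_{x_t}\|\rgrad[x_t]\|_{x_t}$ (inequality~\eqref{eq:thm-ineq1}), notes the step-size lower bound $\|x_{t+1}-x_t\|_{x_t}=s_t\|\rgrad[x_t]\|_{x_t}\geq\kappa\|\rgrad[x_t]\|_{x_t}$ (inequality~\eqref{eq:thm-ineq2}), and then invokes Theorem~2.3 of Schneider--Uschmajew as a black box to obtain both convergence and the rate. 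You instead unfold the proof of that cited theorem: the concavity trick with $\Delta_t=(f(x_t)-f^{\star})^{\theta}$ to get summability of the step lengths and hence the Cauchy property, the capture argument to stay in the {\L}ojasiewicz ball, and the scalar recursion $e_{t+1}\leq e_t-\kappa e_t^{2(1-\theta)}$ for the rate dichotomy. What your version buys is transparency (and it makes explicit the uniform equivalence of the preconditioned norm with the Euclidean one over the compact sublevel set, which the paper leaves implicit); what the paper's version buys is brevity. One remark: the soft spot you flag for the exact line-minimization rule --- namely that the primary descent inequality needs a uniform \emph{upper} bound on $s_t^{\star}$, not just a lower bound --- is present in the paper's proof as well (its justification ``chosen from a finite number of numerical solutions'' addresses positivity per iteration rather than a uniform cap over all $t$), so you have correctly identified the one step of the argument that genuinely requires the extra care you describe; it is not a defect of your plan relative to the paper's.
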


\begin{proof}
The inequality~\eqref{eq:suff-desc} 
ensures that the sequence $\algseq$ is monotonically decreasing. Hence, the RGD algorithm satisfies the conditions in~\cite[Theorem~2.3]{schneider2015convergence}. %
More precisely, it follows from~\eqref{eq:suff-desc} of Lemma~\ref{lemm:suff-desc} that
\begin{align}
|f(x_{t+1}) - f(x_{t})| &\geq \bar{C}\|\rgrad[x_{t}]\|^2_{x_{t}} = (\bar{C}/s_{t})\|x_{t+1}-x_{t}\|_{x_{t}}\|\rgrad[x_{t}]\|_{x_{t}}\notag \\
&\geq \kappa_0 \|x_{t+1}-x_{t}\|_{x_{t}}\|\rgrad[x_{t}]\|_{x_{t}}, \label{eq:thm-ineq1} 
\end{align}
where $\kappa_0>0$, since with line minimization~\eqref{def:ls-linemin}, $s_t=s^*>0$, for all $t\geq 0$, is chosen from a finite number of numerical solutions; and with Armijo line search~\eqref{eq:ls-armijo}, $0<s_{\min}\leq s_t\leq s_t^{0}$, where $s_t^{0}>0$ is the initial stepsize before backtracking. %
In addition, the RGD update rule ensures that 
\begin{equation}\label{eq:thm-ineq2}
\|x_{t+1}-x_t\|_{x_{t}} =s_t\|\trgrad[x_t]\|_{x_t}\geq \kappa\|\trgrad[x_t]\|_{x_t}, 
\end{equation} %
for $\kappa>0$. %
The result of the theorem is obtained by 
combining~\eqref{eq:thm-ineq1},~\eqref{eq:thm-ineq2} and the {\L}ojasiewicz inequality~\eqref{eq:loj-ineq} and using~\cite[Theorem~2.3]{schneider2015convergence}. 
\end{proof}

\revj{As far as we know, the global convergence of the RGD algorithm using RBB stepsizes without line search is not known. %
However, one can apply the RBB stepsize as an initial trial stepsize to the backtracking line search procedure; %
and consequently, all the convergence results in this section can be proved.} Interested readers are referred to~\cite{iannazzo2018riemannian} for details.

\section{Experiments}\label{sec:numerical}
%%%%%%%%%%%%%%%%%%%%%%%%%%%%%%%%%%%%%%%%%%%%%%%%%%%%%%%%%%%%%%%%%%%%%%%%%%%%%
%                         Section Experiments                               % 
%%%%%%%%%%%%%%%%%%%%%%%%%%%%%%%%%%%%%%%%%%%%%%%%%%%%%%%%%%%%%%%%%%%%%%%%%%%%%
In this section, we carry out numerical experiments for tensor completion using
the proposed algorithms and several existing algorithms in the related work. 
Details of these algorithms are as follows. 

The proposed Algorithm~\ref{alg:generic-rgd} is labeled as
Precon~RGD and the proposed RCG algorithm is labeled as Precon~RCG.
Depending on the stepsize selection method, these
algorithms are labeled with a descriptor (i) \revj{line minimization} (linemin)
for the stepsize rule~\eqref{def:ls-linemin}, %
and (ii) Riemannian Barzilai--Borwein (RBB)
for~\eqref{eq:stepsize_bb}. 
\revj{We choose to restrict ourselves to linemin and RBB in our experiments, since they appear to show better performances
in practice and are easier to use than the Armijo line search rule; see Appendix~\ref{appssec:comp} %
for a detailed discussion.} 
 
Euclidean gradient descent (Euclidean~GD) and nonlinear conjugate gradient
(Euclidean~CG) algorithms refer to the algorithms using the Euclidean
gradient~\eqref{eq:def-egrad} 
in the definition of the search directions on $\man$. The
stepsize selection rules are the same as the proposed algorithms; these algorithms are implemented along with the proposed algorithms in the source code. 
INDAFAC is a damped Gauss--Newton method for CPD-based tensor completion proposed by Tomasi and Bro~\cite{tomasi2005parafac}. 
CP-WOPT~\cite{acar2011scalable} is a nonlinear conjugate gradient algorithm for
CPD-based tensor completion. %
AltMin~\cite{guan2020alternating} is an alternating minimization algorithm for CPD-based tensor completion, which uses the linear CG for each of the least squares subproblems. 
% \item \textbf{SiLRTC}~\cite{liu2012tensor}: an algorithm for solving the convex model based on the sum of nuclear norms \devp{of the matricizations of $\tstar$?}.
% \item \textbf{FaLRTC}~\cite{liu2012tensor}: \devp{(what does the problem look like?)} solve the smooth minimization problem.
% \item \textbf{HaLRTC}~\cite{liu2012tensor}: an algoirthm using ADMM. 
% \end{itemize}

KM16 refers to a Riemannian optimization algorithm proposed by Kasai and Mishra~\cite{kasai2016low} for tensor completion with a fixed Tucker rank. 
The Riemannian gradient in this algorithm is defined under a metric selected through Riemannian preconditioning on the manifold corresponding to a (fixed-rank) Tucker decomposition. 
In this algorithm, the tensor candidate is represented by a tuple of factor matrices and a core tensor via the Tucker
decomposition. %
In our experiments on third-order tensors, %
this algorithm is labeled as KM16~$(r_1, r_2, r_3)$, according to the Tucker rank $(r_1,r_2,r_3)$ with which it is tested. 
Note that the dimension of the search space of KM16 is $\sum_{i=1}^{k} \left(m_i r_i - r_i^2\right) + \prod_{i=1}^{k} r_i$, which is different than the dimension of $\man$ (search space of the CPD/PD-based algorithms); 
In particular, the difference in these dimensions is marginal when $r_i\approx R$ for $i=1,2,3$, with $R\ll \min(m_1,\dots, m_k)$.

All the CPD/PD-based algorithms are initialized with a same randomly generated point on $\man$ and the Tucker decomposition-based algorithm (KM16) is initialized with a point such that its tensor representation is close enough to that of the initial point of the other algorithms; see Appendix~\ref{appsec:exp} for details.

All numerical experiments were performed on a workstation with 8-core Intel Core i7-4790 CPUs and 32GB of memory running Ubuntu~16.04 and MATLAB~R2019. 
The source code is available at \url{https://gitlab.com/shuyudong.x11/tcprecon/}. Implementations of the existing algorithms %
are also publicly available. 

\subsection{Synthetic data}\label{ssec:exp-syn}

\paragraph{\revv{Tensor model}}
We consider a low-rank tensor model that is composed of a low Tucker-rank
tensor and independent additive noises: with a given Tucker rank parameter 
$r^{\star}=(\rstar_1,\rstar_2,\rstar_3)$, we generate such a tensor $\tstar$
using the following procedure, 
\begin{equation}\label{def:datamodel0}
    \tstar = \truncr(\mathcal{T}) + \mathcal{E},
\end{equation}
where $T\in\reals^{m_1\times m_2\times m_3}$ is a third-order tensor composed
of i.i.d.\ Gaussian entries, that is, $T_{ijk}\sim\mathcal{N}(0,1)$, the
operator in the form of $\truncr[r](\cdot)$ is a Tucker-rank ($\rktc$)
truncation operator defined as \revv{the best Tucker rank-$r$} approximation of $\mathcal{T}$. 
The truncation $\truncr[r](\cdot)$ can be obtained using existing implementations that are
available in state-of-the-art tensor toolboxes (\eg, Tensor Toolbox~\cite{TTB_Software} and
Tensorlab~\cite{tensorlab}). Here we use the function \texttt{tucker\_als.m} in the MATLAB Tensor Toolbox. 
In the scenario of noiseless observations, $\mathcal{E}=0$; otherwise
$\mathcal{E}\in\reals^{m_1\times m_2\times m_3}$ contains independent noises such that 
$\mathcal{E}_{\ell_{1}\ell_{2}\ell_{3}}\sim\mathcal{N}(0,\sigma)$, where
$\sigma$ is set according to a given signal-to-noise ratio (SNR); see
Appendix~\ref{appsec:exp}. %

\paragraph{\revv{Low-rank tensor recovery from partial, noiseless observations}}
\label{ssec:synth-n0}

A synthetic tensor $\tstar$ is generated with the model~\eqref{def:datamodel0} without noise. 
The tensor $\tstar$ is only observed on an index set $\Omega$, which is composed of indices drawn from the Bernoulli distribution: 
$
(i,j,k)\in\Omega \text{~with probability~} p\in(0,1)$, 
for all $(i,j,k)\in \llbracket m_1\rrbracket\times\llbracket m_2\rrbracket\times\llbracket m_3\rrbracket$. 

For the problem model~\eqref{pb:main}, we set the regularization parameter $\lambda$ to zero, which allows for recovering the low-rank tensor $\tstar$ without any bias, provided that the sampling rate $p$ is sufficient. 
Then we test the aforementioned algorithms with a given rank parameter $R$, assuming that the rank (CP or Tucker rank) of the hidden tensor $\tstar$ is unknown to all the algorithms. 
For the CPD and PD-based algorithms, we set the rank parameter $R$ 
to an arbitrary value such that $R \geq \max(\rstar_1, \rstar_2,\rstar_3)$. Since 
the optimal CP rank of the tensor candidate 
is unknown, %
a larger-than-expected rank parameter is interesting because it allows for searching solutions in a fairly large tensor space, so that there is better chance that 
optimal solutions are in the search space~$\man$. % 
\revj{For parameter $\delta$ of~\eqref{def:mat-precon} involved in our
Riemannian gradients, we choose to use very small values since we are mostly interested in the performance of the Riemannian preconditioning technique. In all the experiments of Section~\ref{sec:numerical}, we set $\delta=10^{-7}$.}

The termination of the
proposed algorithms (Precon RGD and Precon RCG) is controlled by a tolerance
parameter ($\epsilon=10^{-7}$ in this experiment) against the norm of the gradient; 
KM16 uses a Riemannian CG algorithm with Armijo line search %
and default stopping criteria. 
On top of their respective stopping
criteria, all the algorithms are tested within heuristic iteration budget
maxiter $=1000$ \revj{and $T_{\max}=100$ seconds}. %
Note that for all
tested algorithms except AltMin, one iteration corresponds to one pass over the
whole training data $P_{\Omega}(\tstar)$; for AltMin, one iteration corresponds to multiple
passes over the training data, since each of its iteration has a number of inner
iterations for solving the underlying alternating subproblem. %
Table~\ref{tab:synth-n0} shows the performances of the tested algorithms in
terms of recovery errors and time, under the \revj{sampling rate $p=0.3$ and} rank parameters $R\in\{12,14,16\}$. The iteration histories of these algorithms (including KM16) with $R=14$ are shown
in \cref{fig:n0rstar-rperi-allms}. 
Specifically, for the fixed-Tucker rank algorithm, KM16, we also test several other rank parameters than $r=(R,R,R)$;
its tensor recovery performances along with those of the proposed 
algorithms are presented in Table~\ref{tab2:synth-n0}. 
In Table~\ref{tab2:synth-n0}, ``\#variables'' indicates the dimension of the search space of each of
algorithms, depending on the rank parameters. 

\begin{figure}[htpb]
 \centering
 \subfigure[Training RMSEs]
 {\includegraphics[width=0.395\textwidth]{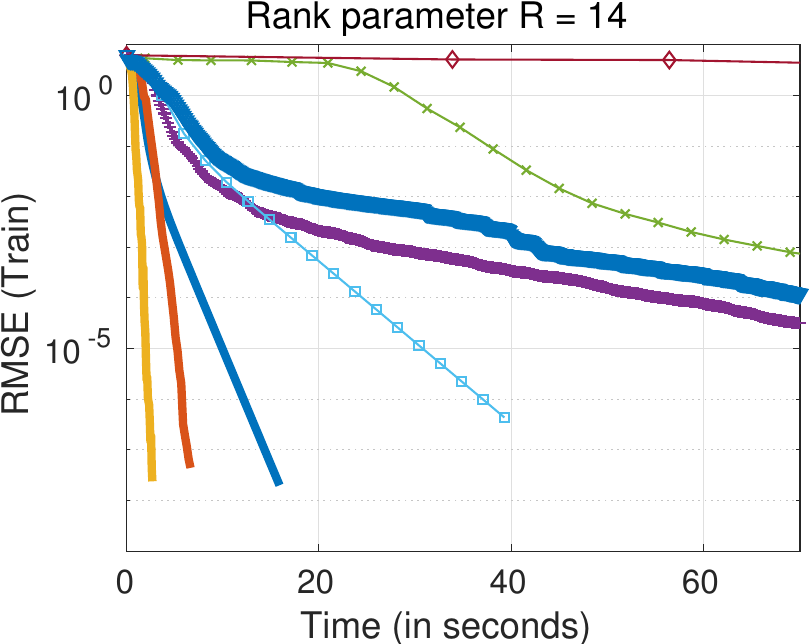}}
 \subfigure[Test RMSEs]
 {\includegraphics[width=0.38\textwidth]{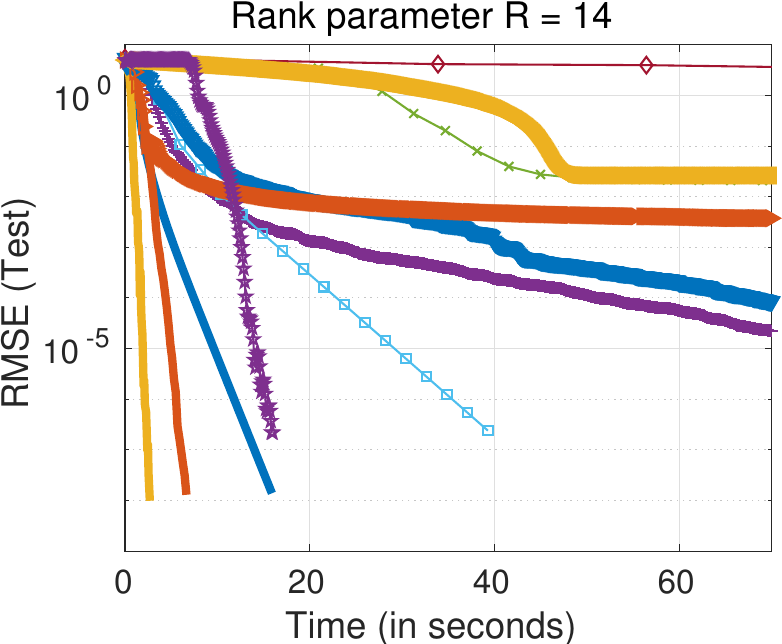}}
 \subfigure{\includegraphics[width=0.19\textwidth]{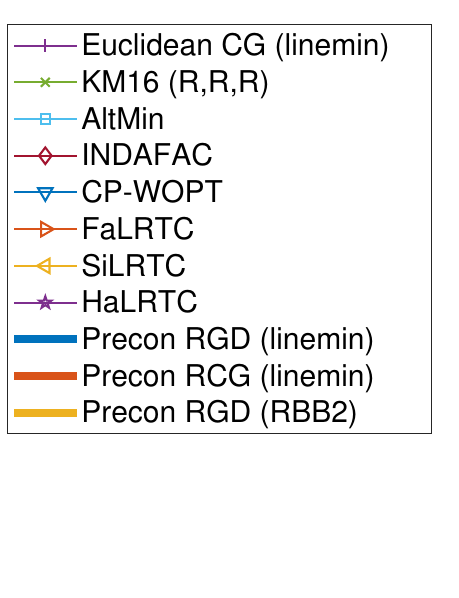}}
 \caption{Tensor completion from noiseless observations. 
The size of $\tstar$ is $(100, 100, 100)$ with a Tucker rank
$\rstar=(3, 5, 7)$. The sampling rate is $0.3$. The rank parameter is set as $\rkval=14$.}
\label{fig:n0rstar-rperi-allms}
\end{figure}

%*****
From the results shown in \cref{fig:n0rstar-rperi-allms} and Tables~\ref{tab:synth-n0}--\ref{tab2:synth-n0}, we have the following observations: 
(i) For all three values of the rank parameter $R$ that are larger than
$\max(\rstar_1,\rstar_2,\rstar_3)$, the proposed algorithms and HaLRTC~\cite{liu2012tensor} %
succeeded in recovering exactly the true hidden tensor $\tstar$ (with a test RMSE lower than $10^{-6}$). AltMin, CP-WOPT and
Euclidean~CG succeeded exact recoveries only with one or two of the rank parameter choices, and their convergences are slower than the proposed algorithms by orders of magnitude. %
The test error of KM16 stagnated at a certain level %
as the core tensor dimensions chosen are not exactly the same as the Tucker rank of $\tstar$; 
(ii) Among the algorithms that successfully recovered the true hidden
tensor, the
proposed algorithms (Precon~RGD and Precon~RCG) outperform AltMin %
in time with a speedup of around $10$ times, and they achieve 
speedups between $2$ and $6$ times compared to HaLRTC; Especially, Precon~RGD (RBB2) has the fastest convergence behavior, 
\revj{due to the fact that the RBB stepsize implicitly involves second-order information through a rough approximation of the Hessian.} 
(iii) For KM16 specifically, the time efficiency and the recovery
performance of KM16~$(r,r,r)$ 
improves significantly when the core tensor dimensions $(r,r,r)$ decrease
(and get closer to $\rstar$). In particular, when $r$ is only $1/2$ of the rank parameter $R=14$, the time efficiency of KM16~$(r,r,r)$
gets close to those of the proposed
algorithms. 
Note that when $r\approx R/2$, the dimensions of the search space of KM16~$(r,r,r)$ is much smaller than that of the proposed algorithms; see Table~\ref{tab2:synth-n0}. These
comparisons can be explained by the fact that the per-iteration cost of KM16 is
much larger than the proposed algorithms, even when the dimensions of its
search space is close or smaller than that of the proposed algorithms; see
\cref{fig:gdatn0rgeqstar_tperi} for detailed comparisons of their average
per-iteration time. 
\revv{The high computational cost of KM16 lies in its
computation of the Riemannian gradient, which scales poorly with the Tucker rank
as it involves computing the Gram matrix of the matricizations of the core
tensor---with a cost of $O(r_1 r_2r_3(r_1+r_2+r_3))$---and solving Lyapunov
equations (for $k=3$ times) of the $r_i\times r_i$ matrices.}

%*****
% 
\begin{table}[htpb]
\footnotesize
\centering
\caption{Tensor completion with noiseless observations. The size of $\tstar$ is $(100, 100, 200)$ with a Tucker rank $\rstar=(3, 5, 7)$. 
The sampling rate is $0.3$. The rank parameters $\rkval$ tested are $\{12, 14, 16\}$. 
}
\label{tab:synth-n0}
\begin{tabular}{lrrrrr}
\hline\hline
Algorithm             & \multicolumn{1}{l}{$R$}  & \multicolumn{1}{l}{iter} & \multicolumn{1}{l}{time (s)} & \multicolumn{1}{l}{RMSE (test)} & \multicolumn{1}{l}{RMSE (train)}  \\
\hline                                                                       
Euclidean CG (linemin) & 12                       & 592                       & 100.03                   & 1.57e-12                    & 2.03e-12                      \\
AltMin                 & 12                       & 67                        & 100.97                   & 8.05e-06                    & 8.09e-06                      \\
INDAFAC                & 12                       & 7                         & 129.91                   & 3.97e-01                    & 4.57e-01                      \\
CP-WOPT                & 12                       & 405                       & 29.96                    & 5.74e-07                    & 6.70e-07                      \\
HaLRTC                 & 12                       & 142                       & 15.81                    & 2.19e-07                    & --                      \\
Precon RGD (linemin)   & 12                       & 96                        & 16.25                    & 3.84e-08                    & 4.79e-08                      \\
Precon RCG (linemin)   & 12                       & 48                        & 8.23                     & 1.96e-08                    & 3.58e-08                      \\
Precon RGD (RBB2)      & 12                       & 65                        & \textbf{3.91}                     & 9.52e-09                    & 4.74e-07                      \\
\hline                                                                       
Euclidean CG (linemin) & 14                       & 518                       & 100.11                   & 2.20e-06                    & 3.10e-06                      \\
AltMin                 & 14                       & 19                        & 39.27                    & 2.39e-07                    & 4.35e-07                      \\
INDAFAC                & 14                       & 5                         & 125.77                   & 1.35e+00                    & 2.60e+00                      \\
CP-WOPT                & 14                       & 1561                      & 94.29                    & 2.40e-06                    & 3.35e-06                      \\
HaLRTC                 & 14                       & 142                       & 15.97                    & 2.19e-07                    & --                      \\
Precon RGD (linemin)   & 14                       & 82                        & 15.90                    & 1.38e-08                    & 1.99e-08                      \\
Precon RCG (linemin)   & 14                       & 34                        & 6.65                     & 1.29e-08                    & 4.39e-08                      \\
Precon RGD (RBB2)      & 14                       & 39                        & \textbf{2.69}                     & 9.84e-09                    & 2.38e-08                      \\
\hline                                                                       
Euclidean CG (linemin) & 16                       & 456                       & 100.01                   & 1.37e-07                    & 1.53e-07                      \\
AltMin                 & 16                       & 8                         & 31.79                    & 2.67e-08                    & 3.32e-08                      \\
INDAFAC                & 16                       & 5                         & 154.34                   & 9.78e-01                    & 1.19e+00                      \\
CP-WOPT                & 16                       & 1766                      & 99.02                    & 5.23e-06                    & 7.00e-06                      \\
HaLRTC                 & 16                       & 142                       & 16.34                    & 2.19e-07                    & --                     \\
Precon RGD (linemin)   & 16                       & 40                        & 8.82                     & 1.56e-09                    & 2.51e-09                      \\
Precon RCG (linemin)   & 16                       & 50                        & 11.09                    & 2.59e-09                    & 5.08e-09                      \\
Precon RGD (RBB2)      & 16                       & 39                        & \textbf{3.03}                     & 1.53e-10                    & 3.14e-10                      \\
\hline\hline
\end{tabular}
\end{table}

\begin{table}[htpb]
\footnotesize
\centering
\caption{Tensor completion with the noiseless observations. Proposed algorithms vs KM16 $(r,r,r)$ with different choices of $r$. %
The size of $\tstar$ is ($100$, $100$,
$200$), with a Tucker rank $\rstar=(3, 5, 7)$.}
\begin{tabular}{cccrrr}
\hline\hline
Algorithm            & $R$ & \#variables & iter & Time (sec.)   & RMSE (test)    \\
\hline                                   
KM16 (R, R, R)         & -- & 7756         & 29   & 102.20 & 2.70e-02     \\ 
KM16 (12, 12, 12)      & -- & 6096         & 30   & 88.32  & 9.00e-04    \\ 
KM16 (9, 9, 9)         & --  & 4086         & 21   & 29.63  & 2.70e-05   \\
KM16 (7, 7, 7)         & --  & 2996         & 22   & 14.39  & 2.99e-05   \\
\hline                                   
Precon RGD (linemin) & 14    & 5600         & 82   & 15.90  & 1.38e-08   \\
Precon RCG (linemin) & 14    & 5600         & 34   & 6.65   & 1.29e-08   \\
Precon RGD (RBB2)    & 14    & 5600         & 39   & \textbf{2.69}   & 9.84e-09   \\
\hline\hline
\end{tabular}
\label{tab2:synth-n0}
\end{table}

\begin{figure}[htpb]
 \centering
 \includegraphics[width=0.72\textwidth]{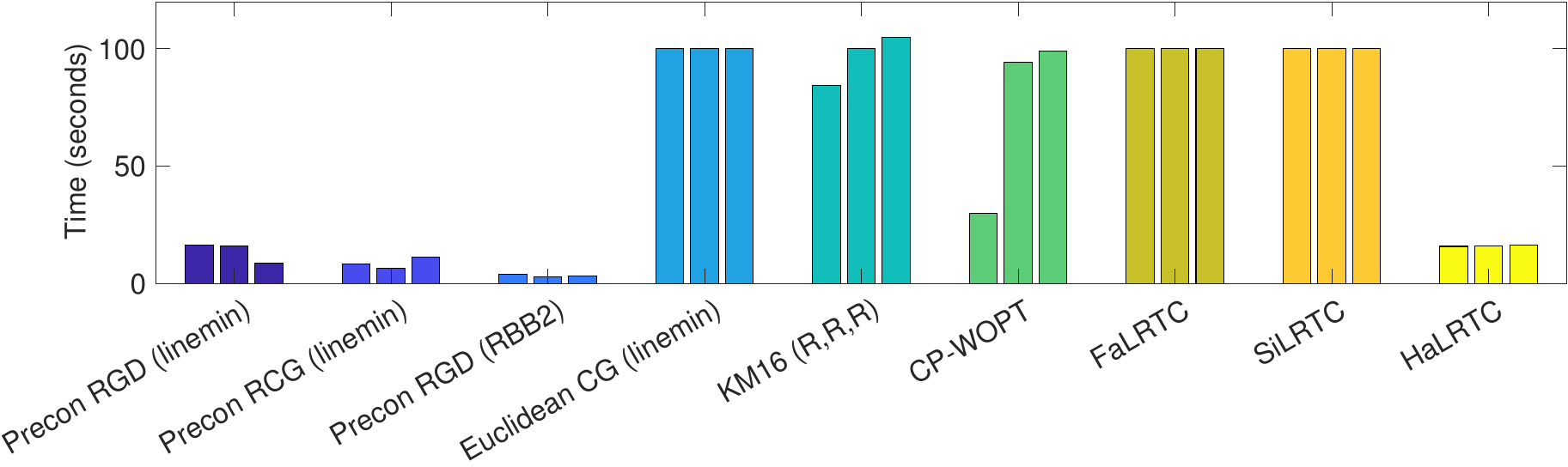}
 \\
 \hspace{1.4em}\includegraphics[width=0.84\textwidth]{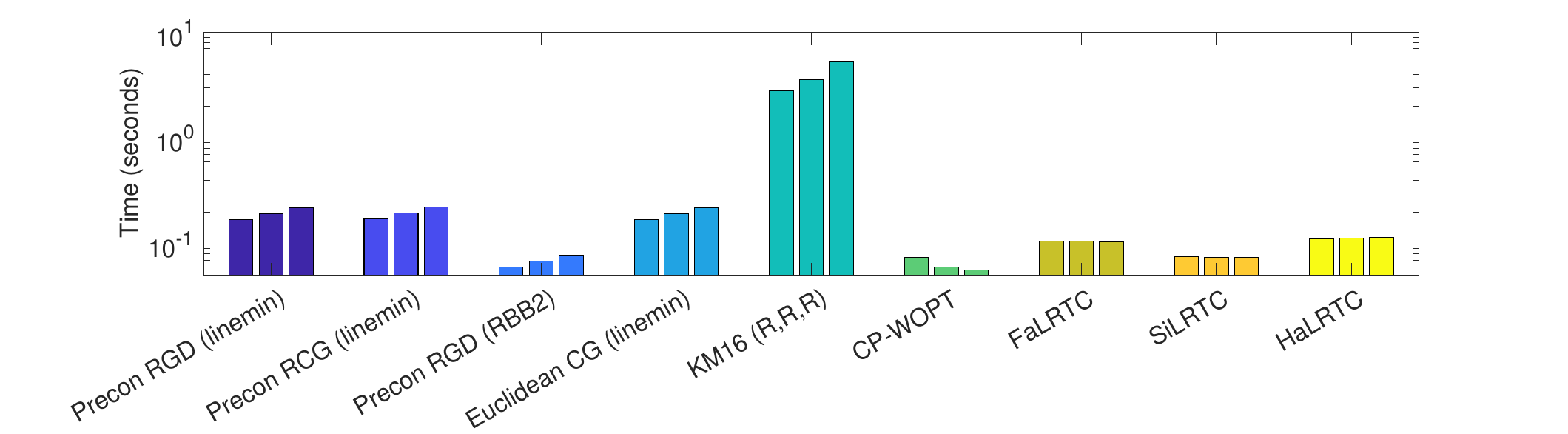}
\caption{\revj{Computation time for tensor completion tasks. (Top): total time
when reaching the target accuracy $\epsilon=10^{-7}$ or the time budget $T_{\max}=100$
        seconds; (Bottom):
        average time per iteration. The three bars of each algorithm correspond
        to the cases with $R=12$, $14$, and $16$ respectively. The size of
        $\tstar$ is $(100, 100, 200)$ with a Tucker rank $\rstar = (3, 5, 7)$.
        The sampling rate is $0.3$.}}
\label{fig:gdatn0rgeqstar_tperi}
\end{figure}
% *************************************

\revj{Furthermore, we make a more thorough test to evaluate the tensor completion performances of the proposed algorithms, alongside Euclidean CG for comparison, in the same tensor completion task on a $6\times 10$ grid of $(p, R)\in \{0.12,0.152,\dots,0.28\} \times \{4,6,\dots,16, 17, 19, 21\}$, for $5$ random runs at each $(p,R)$ (each run is based on a randomly generated index set $\Omega$); see results in \cref{fig:synth-n0-succr} in Appendix~\ref{appsec:exp-lrta}. These results provide a broader view on the difficulties---in terms of chances of exactly recovering $\tstar$---of the tensor completion task with different sampling rates and rank parameter values.}

\revj{Finally, in addition to tensor completion, we extend the application of the proposed algorithms to tensor approximation, where the tensor $\tstar$ is fully observed tensor and the goal is to approximate $\tstar$ with a tensor with an upper-bounded CP rank. We give performance comparisons between the proposed algorithms and the Riemannian Gauss--Newton method (RGN) of~\cite{breiding2018riemannian}; see details in Appendix~\ref{appsec:exp-lrta}.
}

\paragraph{\revv{Low-rank tensor recovery from partial, noisy observations}}\label{ssec:synth-noisy}
In the following experiments, we conduct tensor completion tests under the same
tensor model as in the previous experiment, except that the revealed
tensor entries are observed with additive noise, and the noise level $\sigma$
in the model~\eqref{def:datamodel0} is set according to a given
signal-to-noise ratio (SNR) of $40$ dB. \revj{To make this experiment similar to experiments on real data (MovieLens), in which case the sampling rate is usually at the order of $1\%$ or even lower, we set the sampling rate $p=5\%$.} 

In this experiment, the regularization parameter $\lambda$ of the problem~\eqref{pb:main} is selected from $\{0,1/p, 10^{1/2}/p, 10/p\}$ (where the scalar $p$ is the sampling rate) for a
rank parameter $R=14$ and the selected value of $\lambda$ is $10^{1/2}/p$. All the tested algorithms are terminated if the relative change of the training error attains a given tolerance value: %
\begin{equation}\label{def:relchg}
\texttt{relchg} = \frac{|E(U^{t+1})-E(U^{t})|}{|E(U^{t})|}\leq \texttt{tol}, 
\end{equation}
where $E$ denotes the training RMSE. 
Also, the algorithms terminates if a heuristic time budget $T_{\max}$ is attained. 
We set the tolerance parameter as $\texttt{tol}=10^{-6}$ %
and the time budget as $T_{\max}= 300$s (seconds).

% **************************
\begin{figure}[htpb]
 \centering
 \subfigure[Training RMSEs of all algorithms]{\includegraphics[width=0.36\textwidth]{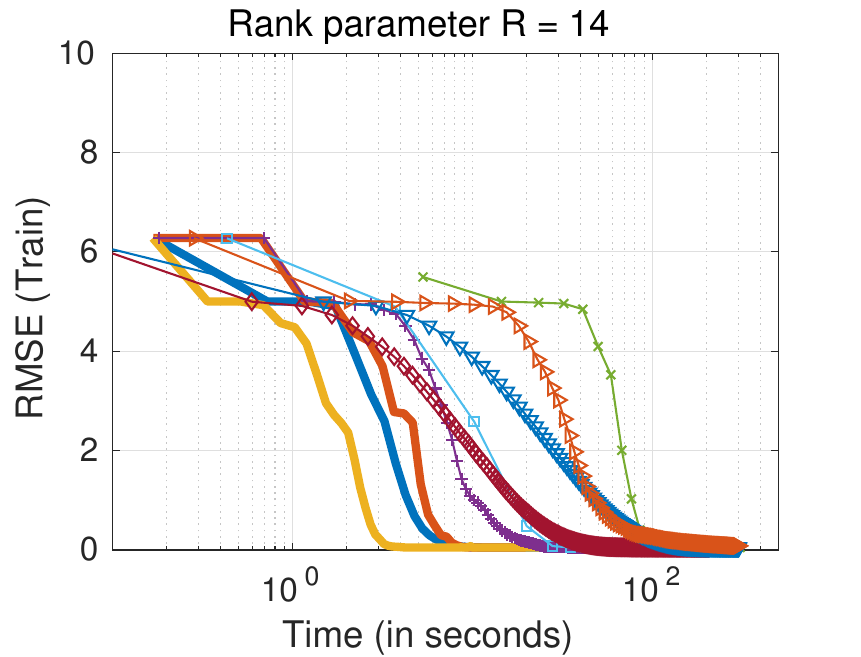}
 }
 \subfigure[Test RMSEs of all algorithms]{\includegraphics[width=0.36\textwidth]{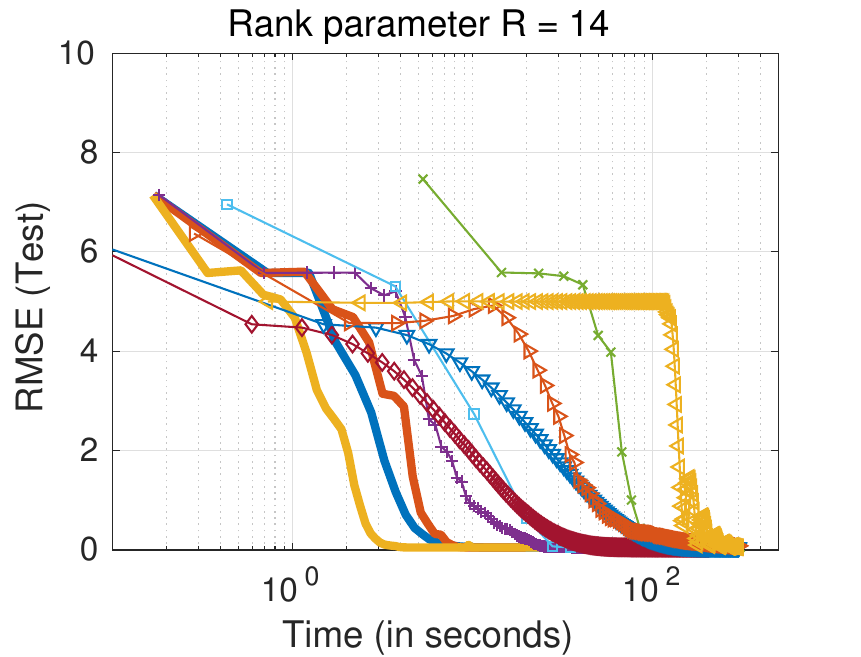}
 }
\subfigure{\includegraphics[width=0.19\textwidth]{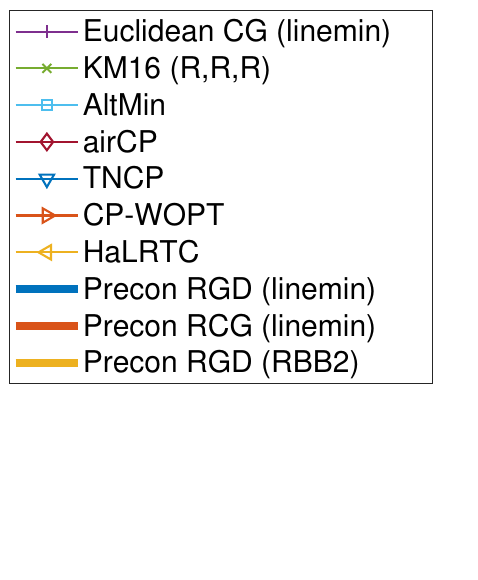}
 }\\
\subfigure[Comparison with KM16]{\includegraphics[width=0.36\textwidth]{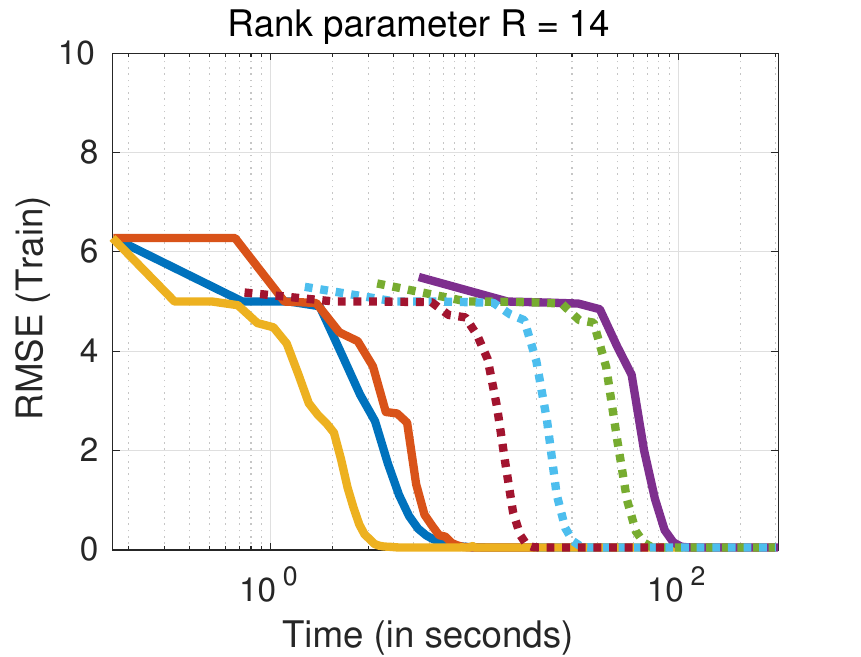}
 }
 \subfigure[Comparison with KM16]{\includegraphics[width=0.36\textwidth]{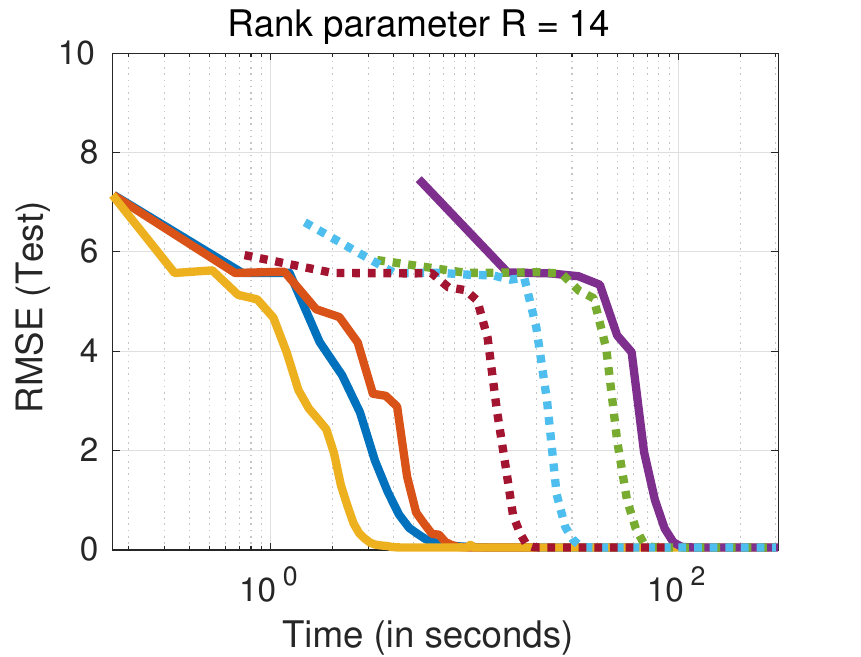}
 }
\subfigure{\includegraphics[width=0.19\textwidth]{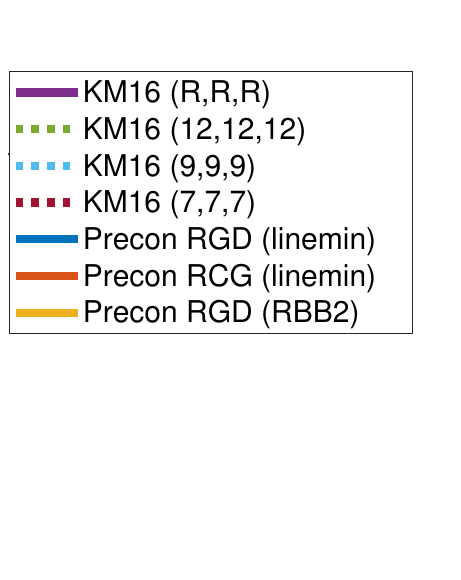}
 }
\caption{Tensor completion from noisy observations. The size of $\tstar$ is $(300, 500,
200)$ with rank $\rstar=(3, 7, 5)$. The sampling rate is $5\%$. The rank parameter $\rkval=14$.} 
\label{fig:ltcrk-noisy-rperi-allms}
\end{figure}

The performances of the tested algorithms are shown in
\cref{fig:ltcrk-noisy-rperi-allms}. 
From these results, we have similar observations as in the previous experiments: 
(i) For a polyadic decomposition rank $R$ that is larger than
$\max(\rstar_1,\rstar_2,\rstar_3)$, all algorithms achieve a
recovery performance of the same level (with a test RMSE around $0.050$); (ii)
the proposed algorithms (Precon~RGD and Precon~RCG) outperform the rest of the
algorithms in time with speedups between $4$ and $15$ times at the several
accuracy levels near their respective termination point; 
(iii) the time efficiency 
of KM16 $(r,r,r)$ improves significantly when the core tensor dimensions
$(r,r,r)$ decrease %
but it remains inferior to those of the proposed algorithms; see
\cref{fig:ltcrk-noisy-rperi-allms}(d)--\ref{fig:ltcrk-noisy-rperi-allms}(e).

\subsection{Real data}\label{ssec:exp-realdata}
In this subsection, we conduct experiments on a real-world dataset. 
We focus on evaluating the time efficiency of the proposed algorithms under various choices
of the rank parameter $R$. To ensure a good generalization performance of the
tensor completion model, we activate the regularization terms of the
problem~\eqref{pb:main} with a regularization parameter $\lambda>0$. 

\paragraph{Dataset and algorithms}
The tensor completion tests are conducted on the MovieLens~1M dataset\footnote{https://grouplens.org/datasets/movielens/1m/}, %
which consists of $1$ million movie ratings from $6040$ users on $3952$ movies and seven-month period from September 19th, 1997 through April 22nd, 1998. Each movie rating in this dataset has a
time stamp, which is the number of week during which a movie rating was given. Therefore, this dataset has a tensor form $\tstar$ of size $6040 \times
3952 \times 150$, where the first two indices are the user and movie identities
and the third order index is the time stamp. The dataset contains over $10^{6}$
ratings, which correspond to the known entries of the data tensor $\tstar$. 
For the tensor completion tasks, we randomly select $80\%$ of the known ratings
as the training set. Note that in this case, the absolute sampling rate
$p=|\Omega|/(m_1m_2m_3)$ is $2.23\%$. 

Due to the large dimensions of the data tensor of MovieLens~1M, several
aforementioned algorithms in the related work were not tested on this dataset
due to excessive memory requirements. In the implementation of these algorithms, %
the tensor index filtering operations such as accessing 
$\tstar_{|\Omega}$ or $\mathcal{T}_{|\Omega}$ in iterations, a dense tensor
format is used for the index set $\Omega$, which---for the same size as $\tstar$---requires the storage of over $3.5\times 10^{9}$ entries in total; Such a
data format poses a memory requirement bottleneck that blocks the test. 
On the other hand, the proposed algorithms and
Euclidean~GD/CG, KM16 and AltMin can be run without the memory issue
since they use the COO format for the training set $\Omega$,
which corresponds to only $10^{6}$ entries on the same dataset. Note that for all tensor decomposition-based algorithms,
the memory requirement for the decomposition variables (or factor matrices) is
$O((m_1+m_2+m_3)R)$---which is bounded by $10^5$ for any rank parameter
$R<100$ in the experiments on MovieLens-1M---is also memory-efficient. 
Therefore, the algorithms that are tested are: the proposed algorithms (Precon~RGD/RCG), Euclidean~GD/CG, KM16 and AltMin.

\paragraph{Experiments and results}
 
Given the data tensor $\tstar$ and the index set $\Omega$ as the training set, we conduct performance evaluations using various choices for the rank parameter $R$, after selection of the regularization parameter $\lambda$. 
The parameter $\lambda$ for the CPD-based algorithms %
is selected among $\{0, 1/p, 10^{1/3}/p, 10^{2/3}/p, 10/p, 10^{4/3}/p\}$ via $3$-fold cross validation using the Euclidean GD algorithm (instead of the proposed ones), 
where the rank parameter $R$ is set to be $5$, $10$ and $15$ respectively; and the values selected by these cross validation procedures are $10^{2/3}/p$ (when $R=5$ and $10$) and $10^{4/3}/p$ (when $R=15$). 
For the Tucker decomposition-based algorithm---KM16---with the Tucker rank $r=(R,R,R)$, for $R\in\{5, 10,15\}$, the values of $\lambda$ selected after the same cross validation procedure are $0$.
Subsequently, we test all algorithms using the selected parameters. 
Similar to previous tests, the stopping criteria for all the tested
algorithms use the relative change of training errors, \ie, \texttt{relchg} in~\eqref{def:relchg}, and a large enough time budget $T_{\max}$. 
We set the tolerance parameter for $\texttt{relchg}$~\eqref{def:relchg} as $\texttt{tol}=10^{-5}$ and the maximal time budget as $T_{\max}= 1800$s. 

We present the iteration histories of all algorithms under the rank parameter
$R=15$ in \cref{fig:ml1m-errs-ranks}(a). 
We also observe the recovery performances of the algorithms under a series of
different rank parameters---for $R\in\{1,\dots,15, 17,19\}$ and $\lambda =
10^{2/3}/p$; see \cref{fig:ml1m-errs-ranks}(b).

\begin{figure}[htpb]
 \centering
 \subfigure[Training RMSEs by time]{\includegraphics[width=0.42\textwidth]{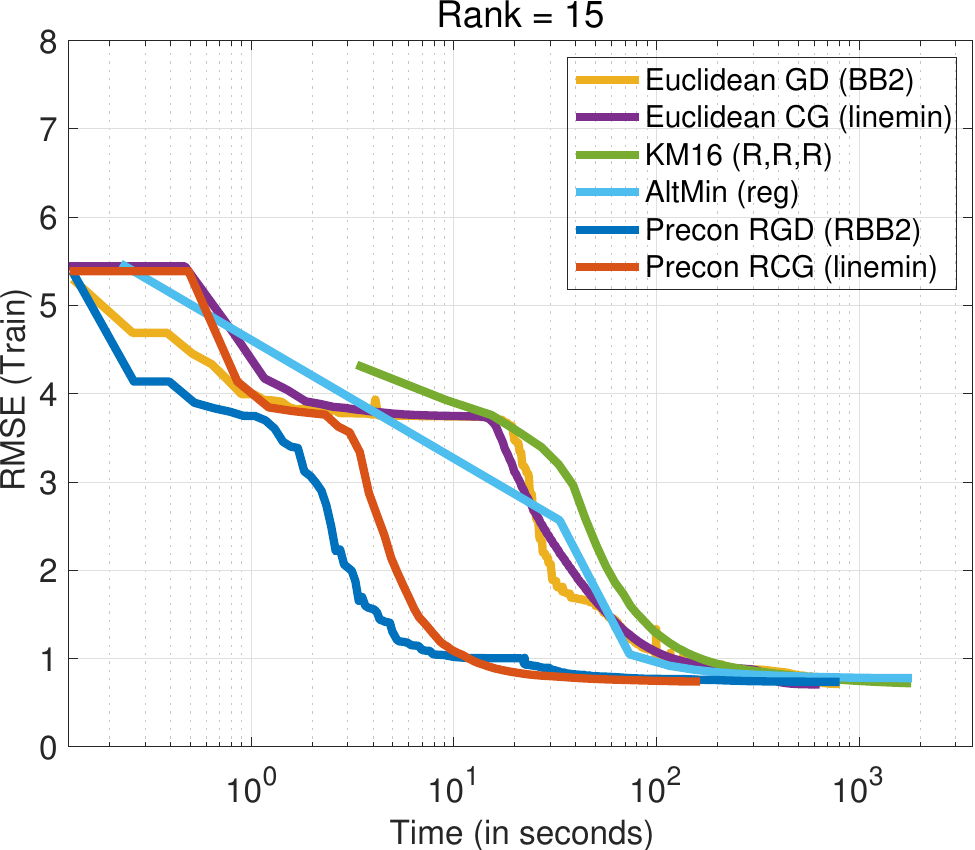}  
 }\qquad
\subfigure[Training and test RMSEs]{\includegraphics[width=0.50\textwidth]{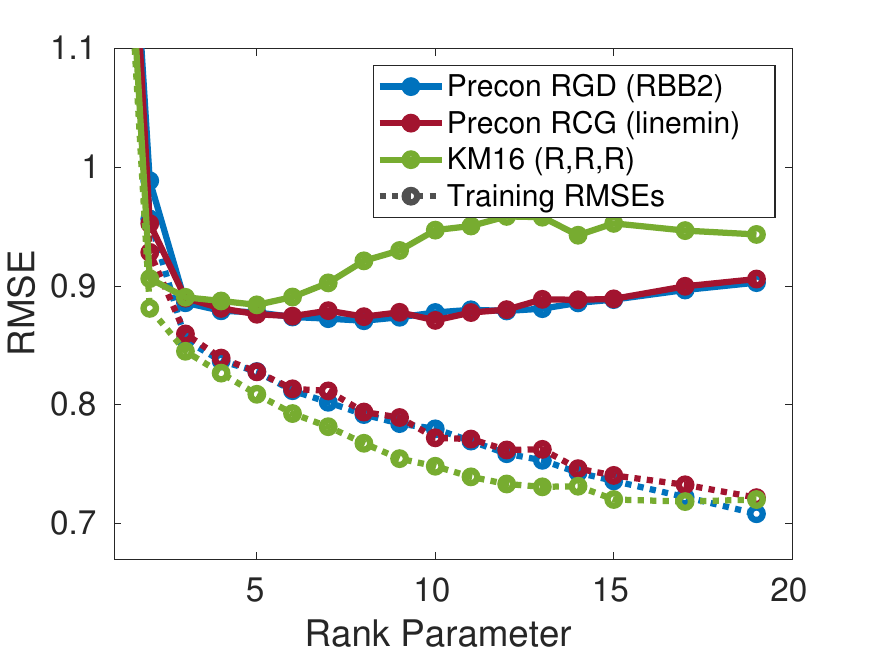}
 }
\caption{Tensor completion on the MovieLens~1M dataset. Recovery performances with various rank parameters and comparisons of the algorithms (with $R=15$) in time.}  
\label{fig:ml1m-errs-ranks}
\end{figure}

Moreover, \cref{fig:ml1m-rperi-multirks} shows the iteration histories of the proposed algorithm Precon~RGD (using the BB stepsize) in comparison with KM16, with the rank parameters $R\in\{3, 6, 15\}$. 

\begin{figure}[!htbp]
\centering
\subfigure[Training RMSE]{ 
\includegraphics[width=0.48\textwidth]{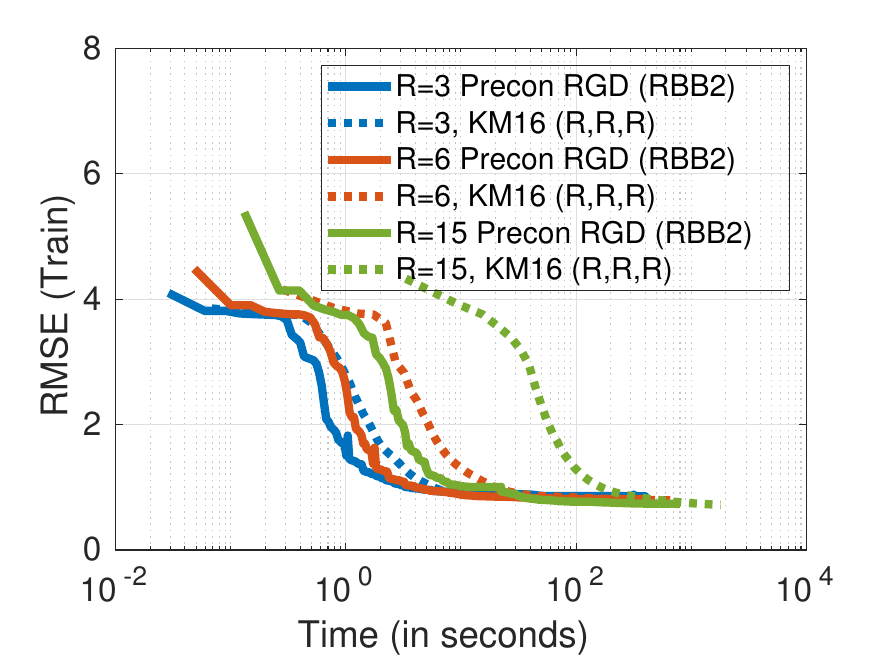}}
% \quad
\subfigure[Test RMSE]{ 
\includegraphics[width=0.48\textwidth]{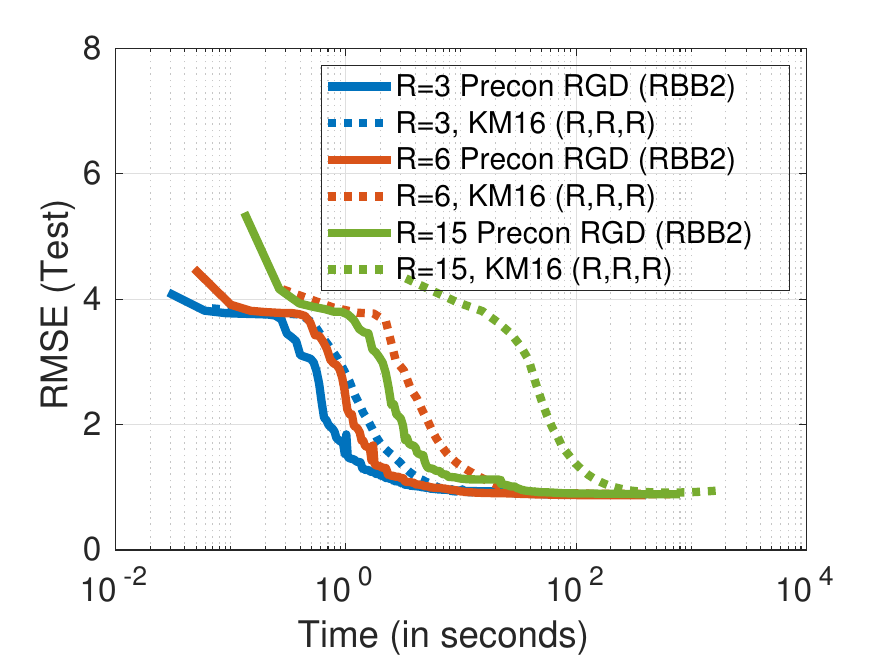}}
\caption{Proposed algorithms and KM16~\cite{kasai2016low}. Dataset: MovieLens~1M. The rank parameters $R$ are $\{3,6,15\}$.}
\label{fig:ml1m-rperi-multirks}
\end{figure}

From \cref{fig:ml1m-errs-ranks}(a), we
observe that (i) the two proposed algorithms have faster convergence behaviors than all the rest of the tested algorithms, and (ii) in particular, % with the rank $R=15$
the proposed algorithms achieve the same recovery error with speedups of around $10$ times compared to KM16, and $6$ times compared to Euclidean~CG (linemin). 
The results in \cref{fig:ml1m-errs-ranks}(b) give an overview of the performance of our algorithms and KM16 in terms of their trade-offs between the model complexity (for the minimization of the fitting error) and the generalization of the model for the prediction of missing entries. From these results, we can see that (under the several randomly generated regularization parameters so-far explored), the rank choice of $R=8$ provides the best recovery error on (unknown) test entries. 
Moreover, for rank choices that are larger than $R=8$,
the decrease in the recovery performances of the proposed algorithms (compared to the case with $R=8$) is much smaller than that of KM16. This can be explained by the fact that the search space of our algorithms under larger rank parameters contain those with smaller ranks, while the search space of KM16 corresponds to matrix spaces of a fixed column-rank.

\section{Conclusion}\label{sec:conclusion}
We proposed a new class of Riemannian preconditioned first-order algorithms for tensor completion through low-rank polyadic decomposition. 
We have analyzed the convergence properties of Riemannian gradient
descent using the proposed Riemannian preconditioning. %
The main feature of the
proposed algorithms stems from a new Riemannian metric defined on the product space of
the factor matrices of polyadic decomposition. This metric induces a local
preconditioning on the Euclidean gradient descent direction of the PD-based
objective function; the underlying preconditioner has the form of an approximated
inverse of the diagonal blocks of the Hessian of the objective function. 

These Riemannian preconditioned algorithms %
share some characteristics with the related work~\cite{kasai2016low}, which deals with tensor completion with a fixed Tucker rank. They differ however in the following sense: the polyadic decomposition model allows for finding a low-rank tensor candidate within a range of CP ranks, %
while the algorithm of~\cite{kasai2016low} searches a tensor solution with a fixed (Tucker) rank.

Because of the more flexible decomposition modeling, our algorithms perform
well with various arbitrary choices of the rank parameter in the tensor
completion tasks on both synthetic and the MovieLens~1M datasets. %
Moreover, we have observed that the proposed algorithms provide significant
speedup over several state-of-the-art algorithms for CPD-based tensor completion, while providing comparable or better tensor recovery quality.  

\begin{appendices}

\section{Algorithmic details}\label{appsec:alg}

\paragraph{Speed-up by using C/MEX-based MTTKRP}\label{appssec:comp}
Algorithm~\ref{algo:mttkrp} is implemented in a mexfunction and has shown significant speedup over the so-far implemented computations (explicit sparse matricizations times the explicitly computed Khatri--Rao products). %
Table~\ref{tab:comp-egrad-matlabvsmex} shows comparative results on the MovieLens~1M dataset, ``Naive'' corresponds to the
implementation where the gradient computations involve 
(i) forming sparse matricizations of the residual tensor, (ii) computing the Khatri--Rao products and (iii)
sparse-dense matrix multiplication. ``Proposed'' corresponds to the results of the implementation using sparse MTTKRP
(Algorithm~\ref{algo:mttkrp}). %
\begin{table}[htpb]
\footnotesize
\centering
\caption{Speedups of the efficient computational method. The tensor dimensions are $6040\times
3952\times 150$.}
\begin{tabular}{c|rr|r|r}
\hline\hline
\multicolumn{1}{c|}{\multirow{2}{*}{iter}} & \multicolumn{2}{c|}{Time (s)}  &
\multicolumn{1}{c|}{Average} & \multicolumn{1}{c}{RMSE} \\ 
\multicolumn{1}{c|}{}     & \multicolumn{1}{c}{Naive} &
\multicolumn{1}{c|}{Proposed} & \multicolumn{1}{l|}{speedup} & \multicolumn{1}{l}{Naive/Proposed} 
\\ 
\hline
1         & 0.568                           & 0.067       & --                      & 4.778                     / 4.778                                            \\
101       & 71.848                          & 16.199      & 4.435$\times$           & 0.795                     / 0.795                                            \\
201       & 142.697                         & 32.333      & 4.413$\times$           & 0.765                     / 0.765                                            \\
301       & 213.897                         & 48.508      & 4.410$\times$           & 0.759                     / 0.759                                            \\
401       & 285.117                         & 64.609      & 4.413$\times$           & 0.759                     / 0.759                                            \\
501       & 356.405                         & 80.712      & 4.416$\times$           & 0.759                     / 0.759                                            \\
\hline\hline
\end{tabular}
\label{tab:comp-egrad-matlabvsmex}
\end{table}

\paragraph{Performance of the three stepsize methods}
We compare the performances of the three stepsize methods (in Section~\ref{ssec:algs}) in the same experimental settings as in Section~\ref{ssec:exp-syn}. %
\revj{The three stepsize methods for the proposed algorithms (Precon RGD and
RCG) are tested in tensor completion tasks with sufficiently large sampling
rate. %
In \cref{tab:synth-n0-comp-stepsz}, the time (s) and RMSE scores of each method correspond to the average of their respective values (at termination) after $20$ random tests.
\cref{tab:synth-n0-comp-stepsz} shows: (i) for Precon RGD, all three stepsize methods yield successful recovery results in terms of the average test RMSE but linemin and RBB2 are faster than Armijo; and (ii) for Precon RCG, the percentage of successful recoveries (\#Succ/20) with Armijo is inferior to those with linemin and RBB2. In particular, we show the iteration history of one failed attempt of Precon RCG (Armijo), in \cref{fig:gdatn0rgeqstar_comp_stepsz}(b), in which the algorithm stagnates prematurely. %
Therefore, for better practical performance in the experiment of Section~\ref{ssec:exp-syn}, we favor the choice of linemin and RBB2 for the computation of the stepsizes. 
}

\begin{table}[htpb]
\footnotesize
\centering
\caption{\revj{Comparison of the three stepsize meethods under the same setting
as in Section~\ref{ssec:exp-syn}. The time (s) and RMSE scores at termination
of each algorithm are the average values over the results of $20$ random tests.
The size of $\tstar$ is $(100, 100, 200)$ with a Tucker rank $\rstar=(3, 5,
7)$. The sampling rate is $0.3$. The Armijo rule uses default settings in~\cite{manopt}. } 
}
\label{tab:synth-n0-comp-stepsz}
\begin{tabular}{lrrrrrr}
\hline\hline
~ & Stepsize rule             & \multicolumn{1}{l}{$R$}  & \multicolumn{1}{r}{\#Succ/20} & \multicolumn{1}{r}{time (s)} & \multicolumn{1}{l}{RMSE (test)} & \multicolumn{1}{l}{RMSE (train)}  \\
\hline                                             
\multirow{3}{*}{Precon RGD}  & (Armijo)   & 14           & 20/20             & 12.176                      & 1.8932e-08                      & 1.8438e-08              \\ 
                             & (linemin)  & 14           & 20/20             & 6.169                       & 2.7023e-08                      & 3.2530e-08              \\
                             & (RBB2)     & 14           & 20/20             & 1.468                       & 2.5175e-08                      & 7.9817e-08              \\
\hline 
\multirow{2}{*}{Precon RCG}  & (Armijo)   & 14           & 18/20             & 2.630                       & 1.3037e-02                      & 1.3139e-02              \\
                             & (linemin)  & 14           & 20/20             & 3.919                       & 1.6501e-08                      & 2.9628e-08              \\
\hline\hline
\end{tabular}
\end{table}

% %% 
\begin{figure}[htpb]
\centering
\subfigure[With Precon RGD]{\includegraphics[width=0.41\textwidth]{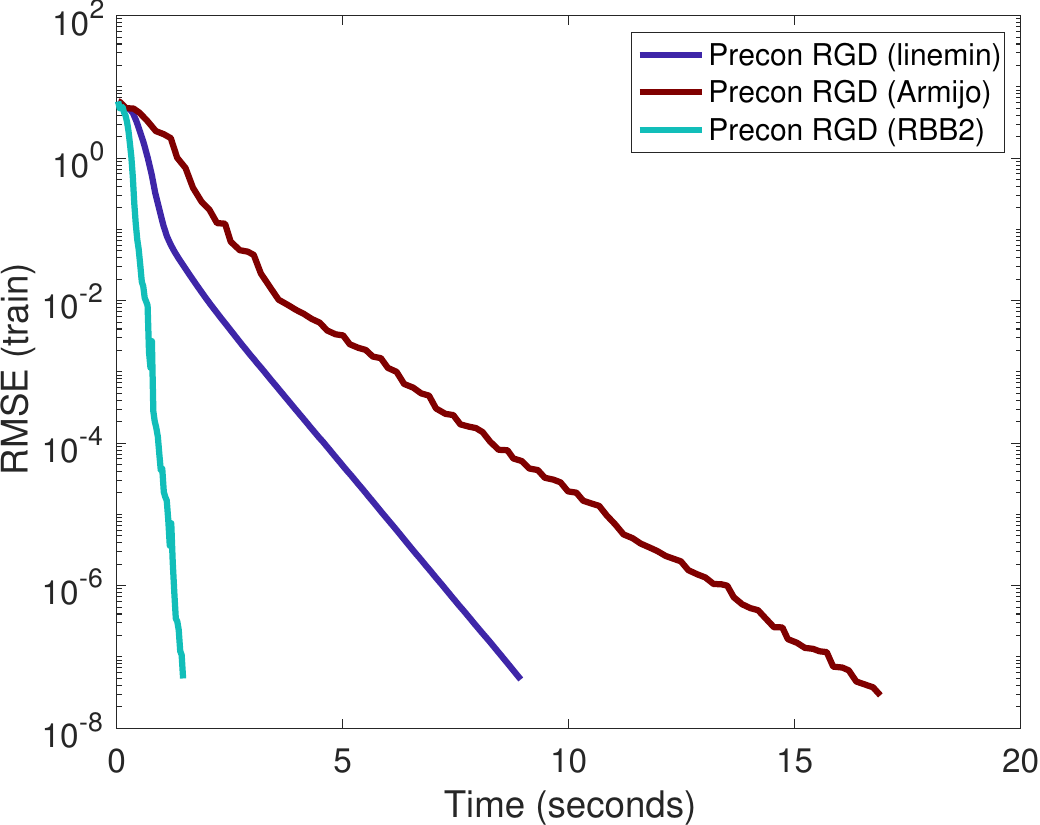}
 }
 \qquad%
 \subfigure[With Precon RCG]{\includegraphics[width=0.41\textwidth]{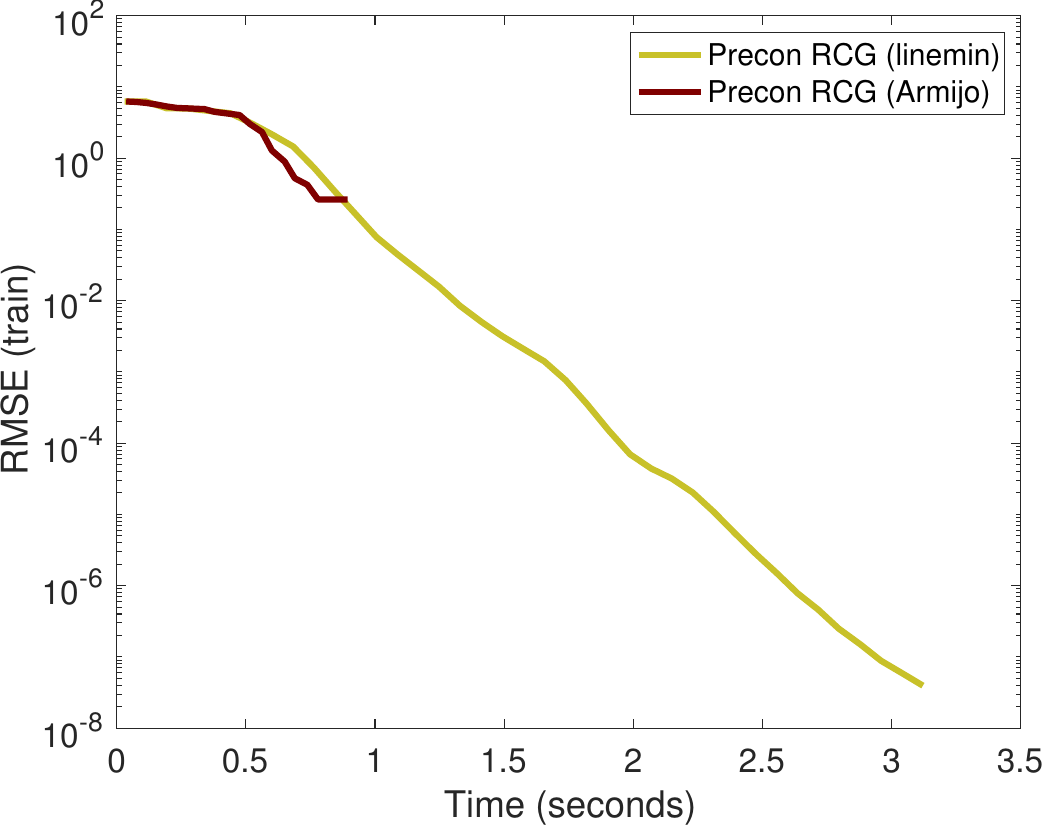}
  }
\caption{\revj{Comparisons between the stepsize selection methods. Tensor size $(100, 100, 200)$, Tucker rank $\rstar=(3, 5, 7)$.  
The rank parameter $\rkval = 14$. The sampling rate is set to $0.3$.}} 
\label{fig:gdatn0rgeqstar_comp_stepsz}
\end{figure}

\section{Experimental details}\label{appsec:exp}

\paragraph{Synthetic model}
The noise level in the synthetic tensor model~\eqref{def:datamodel0} is determined
according to the \revv{signal-to-noise ratio (SNR): $\text{SNR} = \expe[][T^{2}]/\expe[][E^2]$, %
where $T$ and $E$ are the random variables that represent the tensor entries of
the low-rank tensor $\mathcal{T}$ and the noise tensor $\mathcal{E}$
in~\eqref{def:datamodel0}. In the experiments with $T\sim\mathcal{N}(0,1)$ and
$E\sim\mathcal{N}(0,\sigma_{N})$, the parameter $\sigma_{N}$ is computed
for a given SNR. %
The SNR expressed the logarithmic decibel scale (dB) is defined as $\text{SNR (dB)}=10\log_{10}(\text{SNR})$.}

\paragraph{\revv{Initialization}} 
The initial point of all CPD and polyadic decomposition-based algorithms is a
tuple $U_{0}=(U^{(1)}_{0},\dots,U_{0}^{(k)})$, where the $m_i\times R$ factor
matrices are random Gaussian matrices: $[U_0^{(i)}]_{\ell
r}\sim\mathcal{N}(0,1)$. 
For the Tucker decomposition-based algorithm (KM16), we choose to construct an
initial point that is close enough to $U_0\in\man$ for fair comparisons. For a
Tucker rank $(r_1,\dots,r_k)$, %
we initialize KM16 $(r_1,\dots, r_k)$ with a point in Tucker decomposition form
$(G;\tilde{U}^{(1)},\dots,\tilde{U}^{(k)})$, such that its tensor representation
is close enough to $\cpdp[U_{0}]$. For this purpose, %
we set $\tilde{U}^{(i)}_{0}$ as random Gaussian matrices of size $m_i\times
r_i$ with $ \tilde{U}^{(i)}_{0}\sim\mathcal{N}(0,1)$, %
and set the core tensor $G$ of size $r_{1}\times\dots\times r_{k}$ as a random Gaussian matrix
with $G_{i_1,...,i_k}\sim\mathcal{N}(0,\sigma)$, where $\sigma=\sqrt{R/r_1...r_k}$.
The choice of this variance parameter is based on the observation that the CPD
form can be seen as a special Tucker with diagonal core tensor
$D=\text{diag}(1,...,1)\in\reals^{R\times...\times R}$. Restricting $G$ to have
the same Frobenius norm as $D$ requires that the variance parameter $\sigma =
\sqrt{R/r_1...r_k}$. In particular, in the case whre $r_i=R$, we set
$\tilde{U}_{0}^{(i)} = U_{0}^{(i)}$, for $i=1,\dots,k$. 

\paragraph{Performance evaluation}
In the experiments, we evaluate the quality of tensor completion with the root-mean-square error (RMSE), for a tensor candidate $\mathcal{T}$ and a given index set $\Omega'$, %
$\text{RMSE}(\Omega') = \fro{\mathcal{P}_{\Omega'}\left(\mathcal{T}-\tstar\right)}/\sqrt{|\Omega'|}$.
% \end{equation}
The training and test RMSE refer to $\text{RMSE}(\Omega)$ and
$\text{RMSE}(\tilde{\Omega^{c}})$ respectively, where $\Omega$ is the (training)
index set of the observed entries used in the definition of the data fitting
function $f_{\Omega}$ in~\eqref{pb:main} and the test set $\tilde{\Omega^{c}}$ is the complementary of $\Omega$ in the set of all available entries. In some of the experiments, $\tilde{\Omega^{c}}$ is a subset of the complementary set (with uniformly distributed indices) such that $|\tilde{\Omega^{c}}|=(1/4)|\Omega|$ in order to reduce the time for evaluating the test RMSE, if the whole complementary set is overwhelmingly large ($2.10^{6}$).

\section{Supplementary experiments}\label{appsec:exp-lrta}
\paragraph{Tensor recovery performances}

\revj{In addition to the tensor completion results shown in Section~\ref{ssec:exp-syn}, we
conducted tensor completion tasks on a $6\times 10$ grid of $(p, R)\in \{0.12,0.152,\dots,0.28\} \times \{4,6,\dots,16, 17, 19, 21\}$, for $5$ random runs at each $(p,R)$ (each run is based on a randomly generated index set $\Omega$).} 

\begin{figure}[htbp]
        \centering 
            % \subfigure[{\scriptsize\revj{Euclidean CG}}]{
            \includegraphics[width=.20\textwidth]{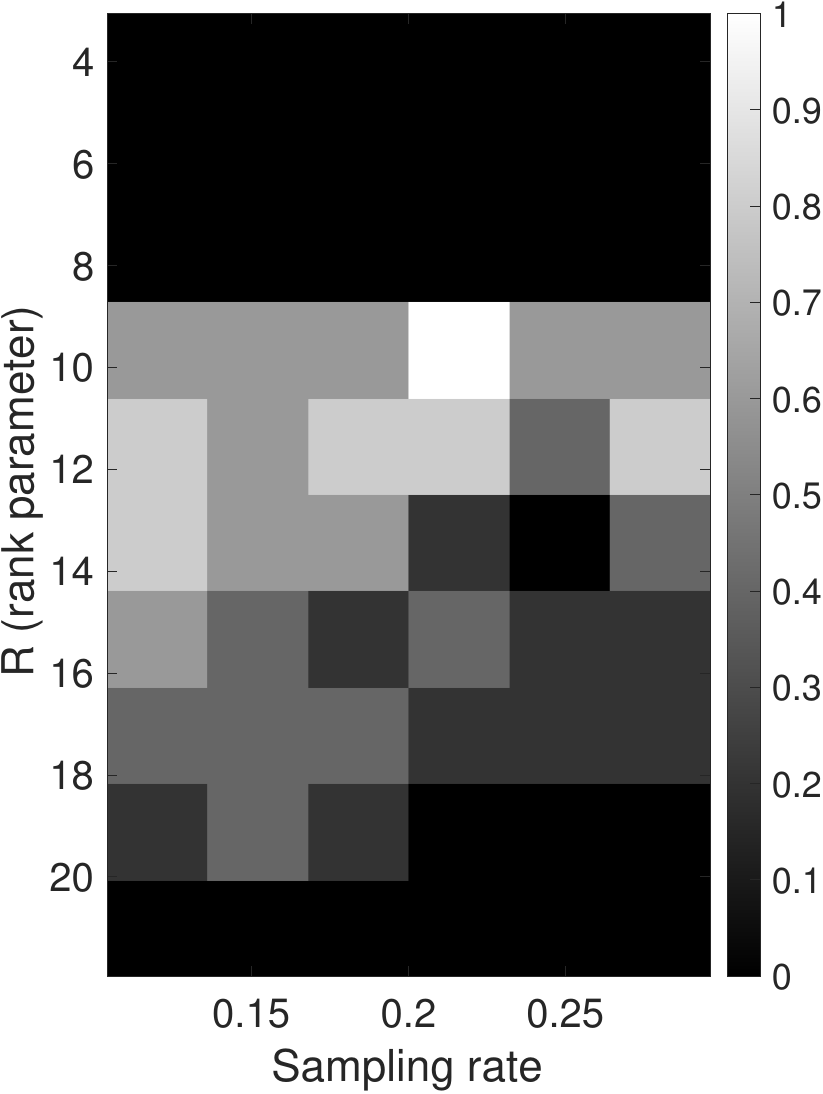} 
            % }
            % \subfigure[{\scriptsize\revj{PreconRGD (lmin)}}]{
            \includegraphics[width=.20\textwidth]{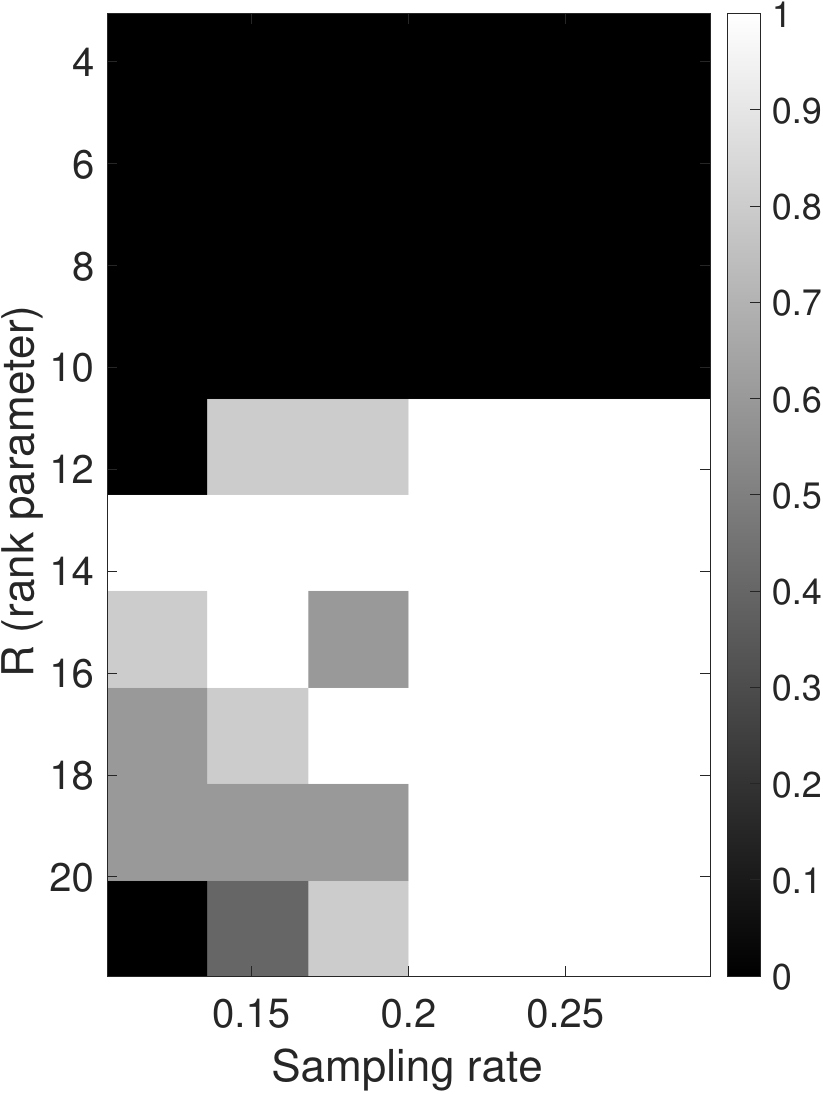} 
            % }
            % \subfigure[{\scriptsize\revj{PreconRCG (lmin)}}]{
            \includegraphics[width=.20\textwidth]{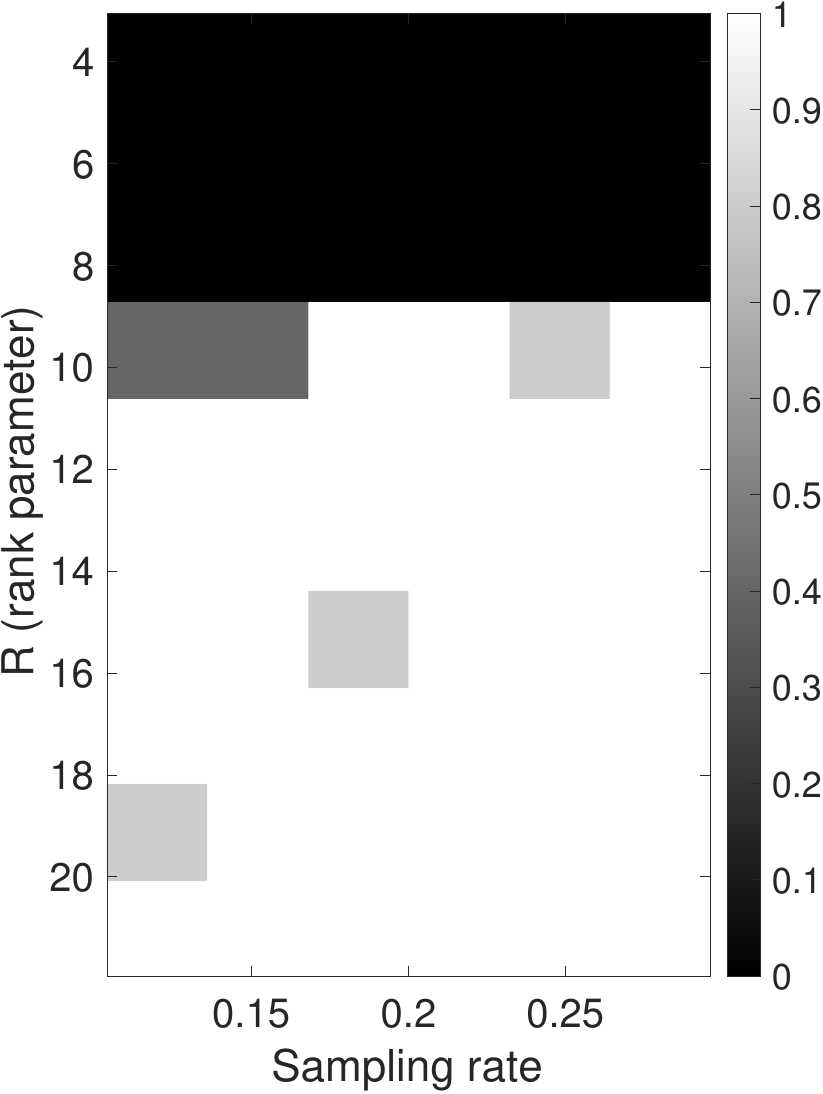} 
            % }
            % \subfigure[{\scriptsize\revj{PreconRGD (rbb2)}}]{
            \includegraphics[width=.20\textwidth]{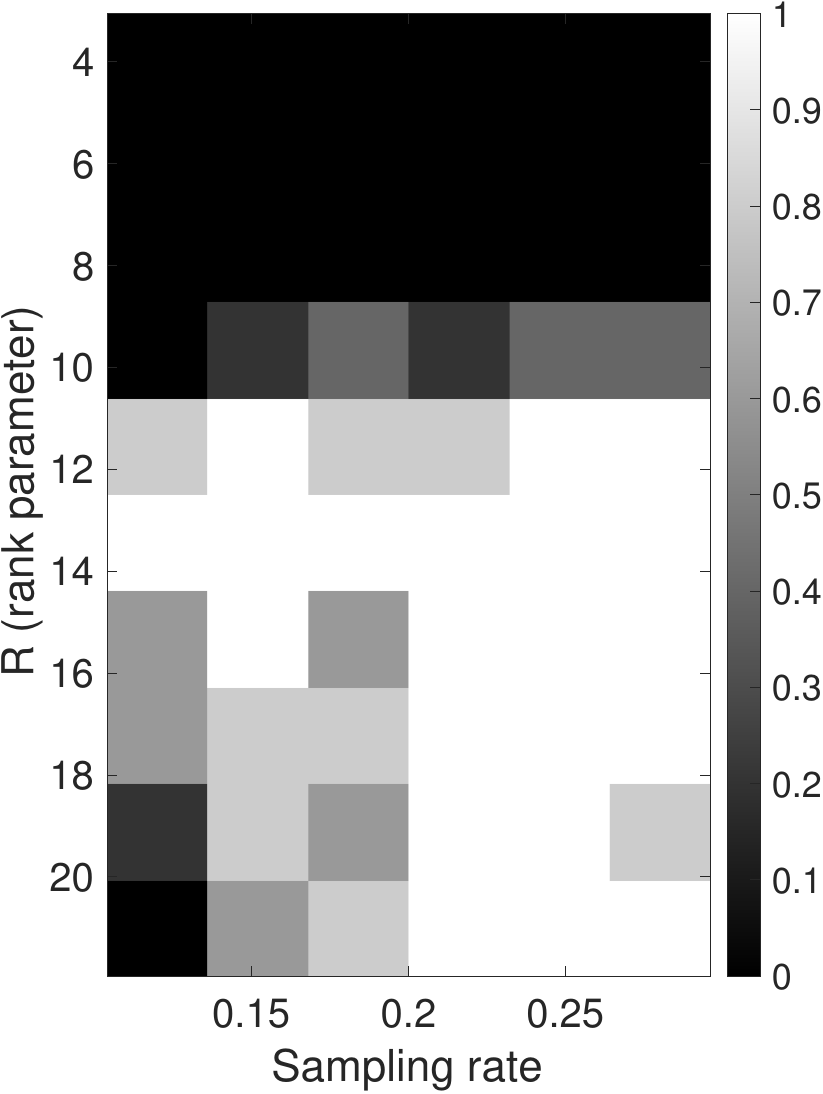}
            % }
            \\
            \subfigure[{\scriptsize\revj{Euclidean CG}}]{
            \includegraphics[width=.20\textwidth]{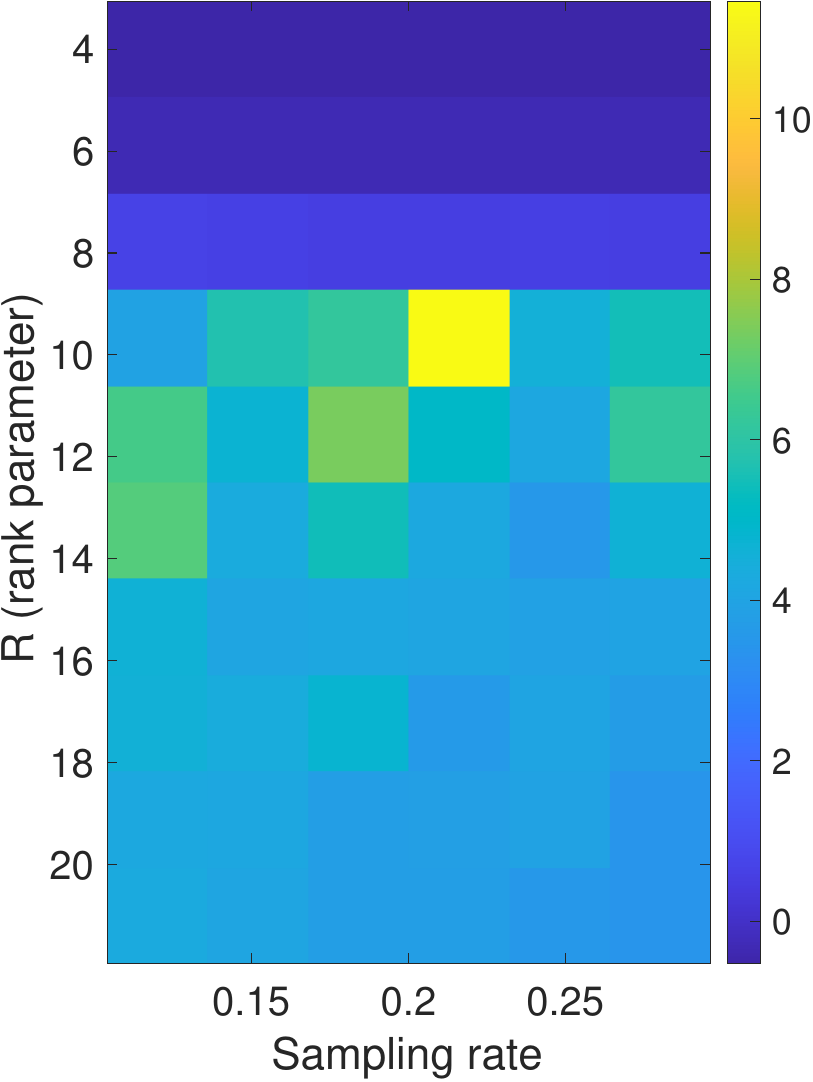} 
            }
            \subfigure[{\scriptsize\revj{PreconRGD (lmin)}}]{
            \includegraphics[width=.20\textwidth]{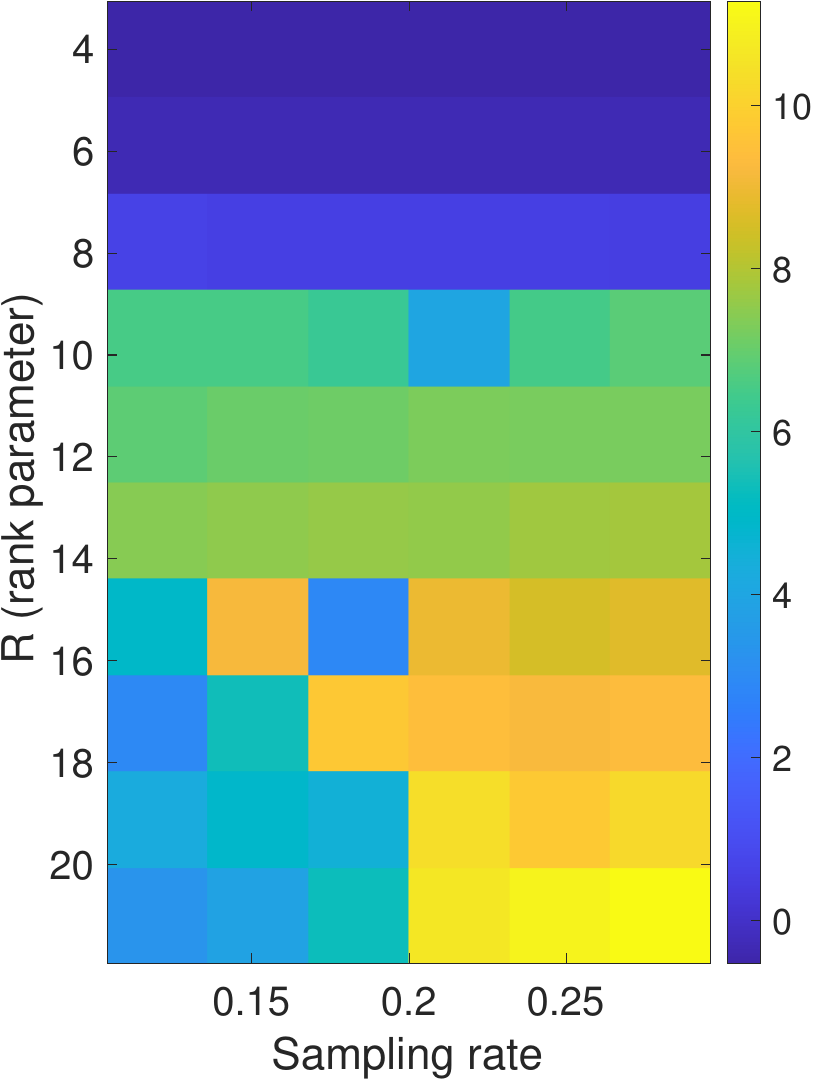} 
            }
            \subfigure[{\scriptsize\revj{PreconRCG (lmin)}}]{
            \includegraphics[width=.20\textwidth]{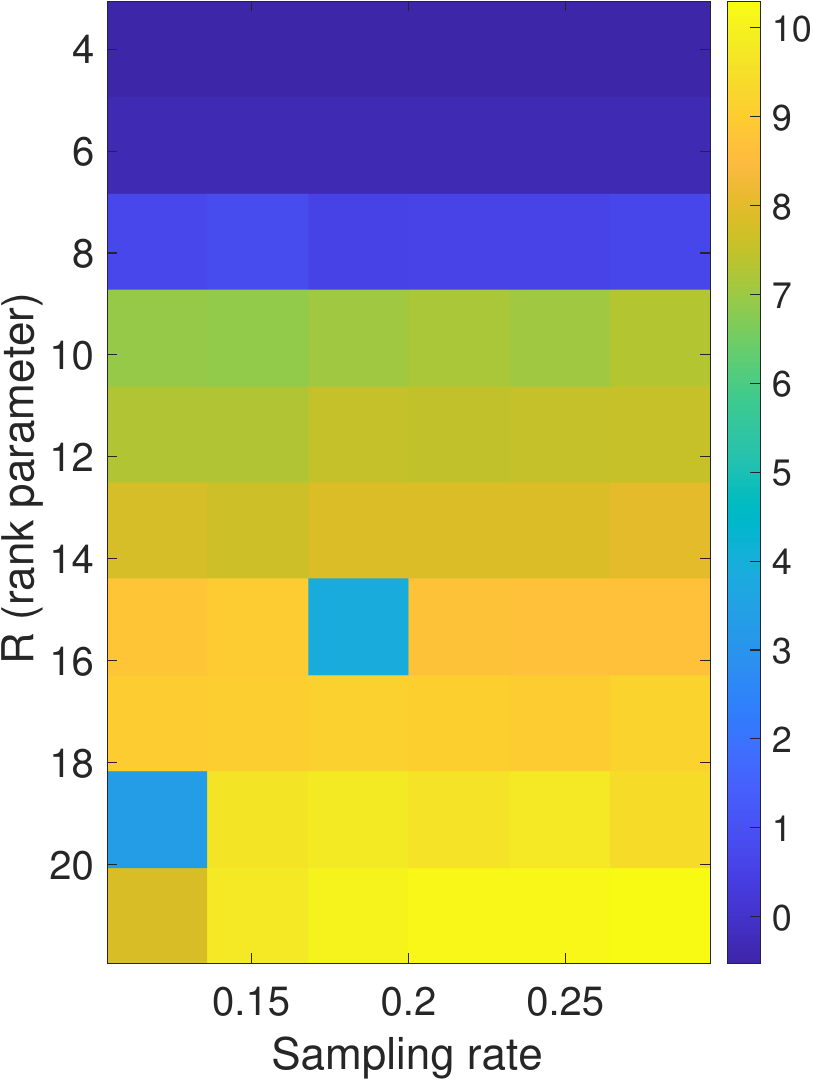} 
            }
            \subfigure[{\scriptsize\revj{PreconRGD (rbb2)}}]{
            \includegraphics[width=.20\textwidth]{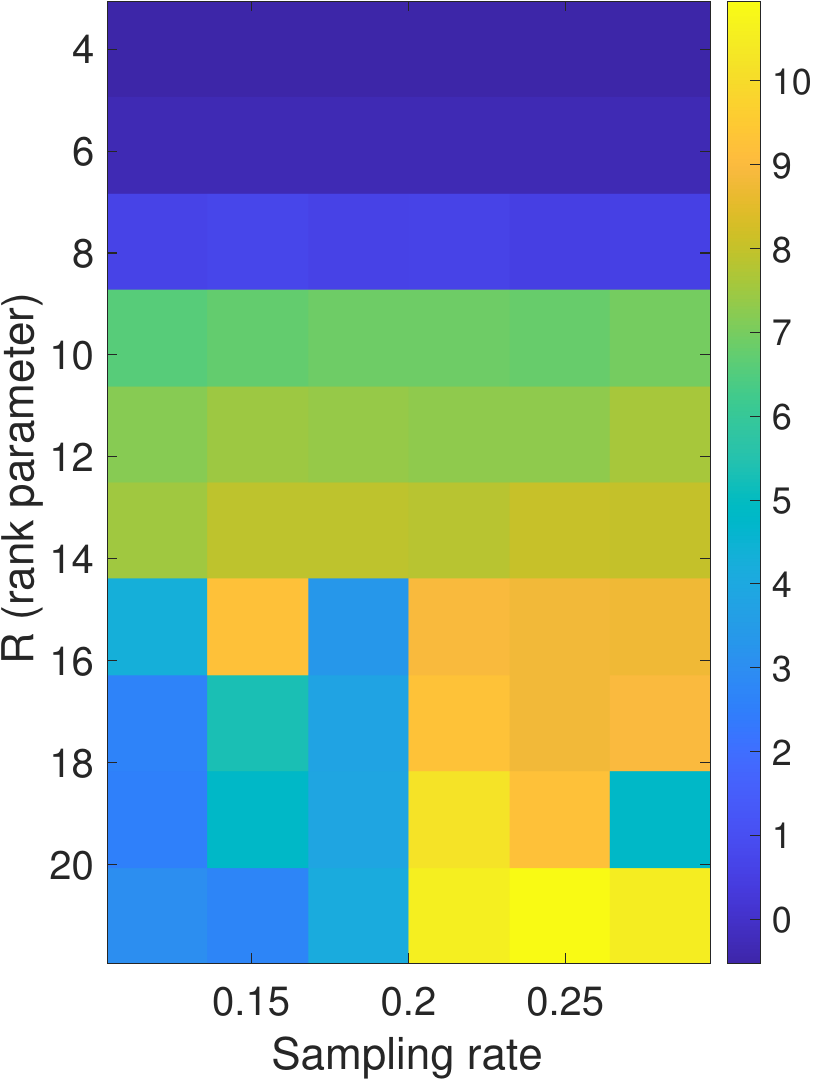} 
            }
\caption{\revj{Tensor completion performances %
based on different sampling rates and rank parameters. (Top, a-d): Rate of successful recoveries; (Bottom, a-d): Average errors in $-\log_{10}$(RMSE). The size of $\tstar$ is $(100, 100, 200)$ with a Tucker rank $\rstar=(3, 5,7)$. (lmin) is short for (linemin) in some of the algorithm descriptors above. 
    }\label{fig:synth-n0-succr}}
    \end{figure}

\revj{
\cref{fig:synth-n0-succr} shows the average final RMSEs of the solutions given
by the algorithms and the corresponding rate of
successful recoveries among the $5$ random runs. Each solution is counted as a successful recovery if its RMSE is lower than $10^{-7}$. 
In \cref{fig:synth-n0-succr}, we observe, for each algorithm tested, a clear enough phase transition of recovery rates from the unsuccessful regime to the successful regime, when the rank parameter $R$ increases. We observe that the minimal value of $R$ (for all these algorithms) to ensure a good chance of exact recovery is around $R=10$, which corresponds to a search space slightly larger but close enough in dimensionality to the tensor space with a Tucker rank $\rank_{\mathrm{tc}}(\tstar)=(3,5,7)$. }

\paragraph{Tensor approximation}
\revj{In addition to tensor completion tasks, we investigate the performance of the proposed
algorithms for low-rank CP decomposition and compare them with a low-rank CPD
algorithm using the Riemannian Gauss--Newton method~\cite{breiding2018riemannian}. } 

\begin{figure}[htbp]
        \centering 
            \includegraphics[width=.31\textwidth]{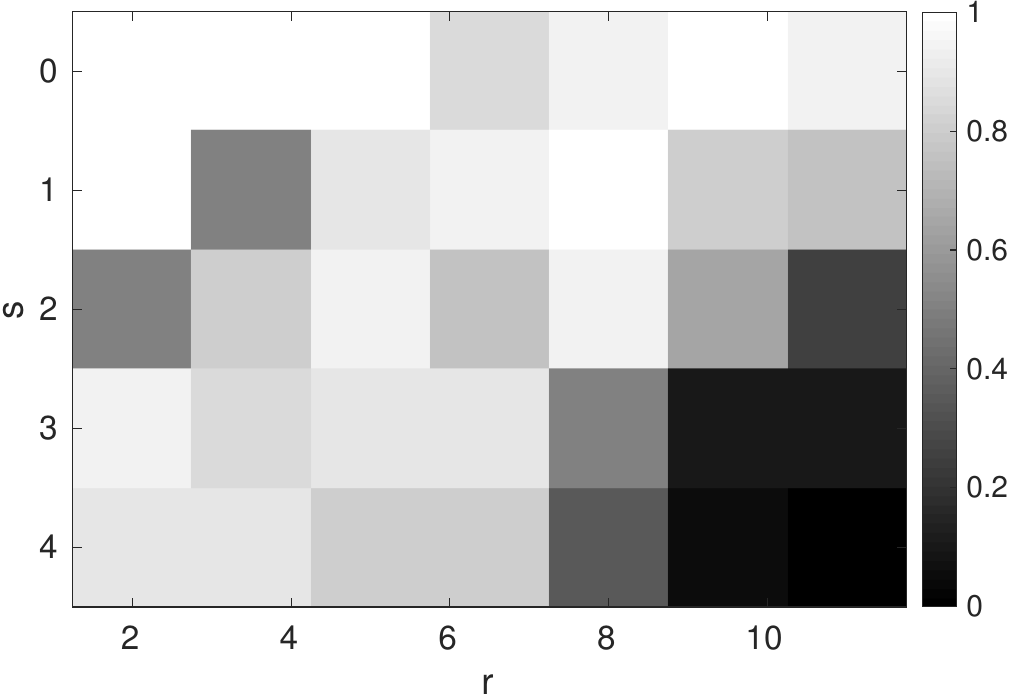}
            \quad 
            \includegraphics[width=.31\textwidth]{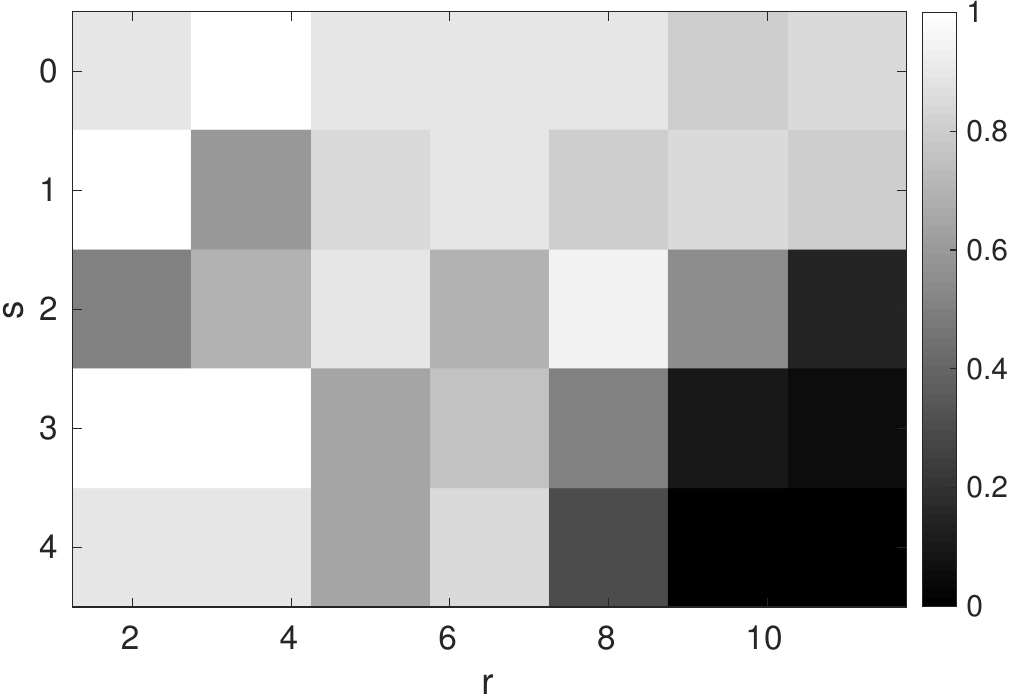}
            \quad 
            \includegraphics[width=.31\textwidth]{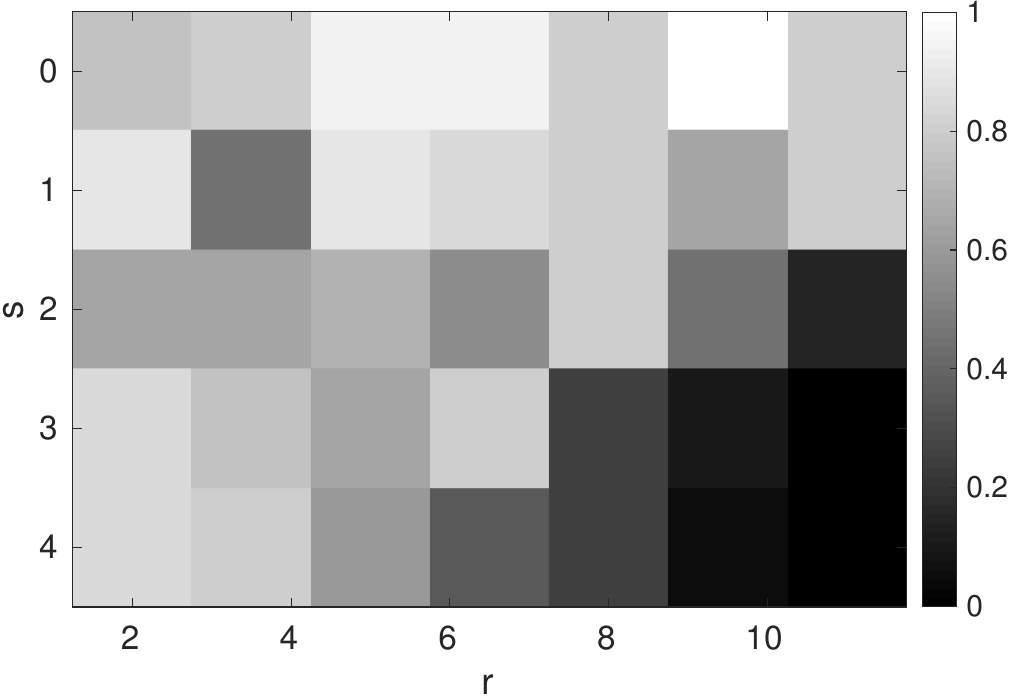} \\
            {\footnotesize (a) RGN-HR}\\~\\
            \includegraphics[width=.31\textwidth]{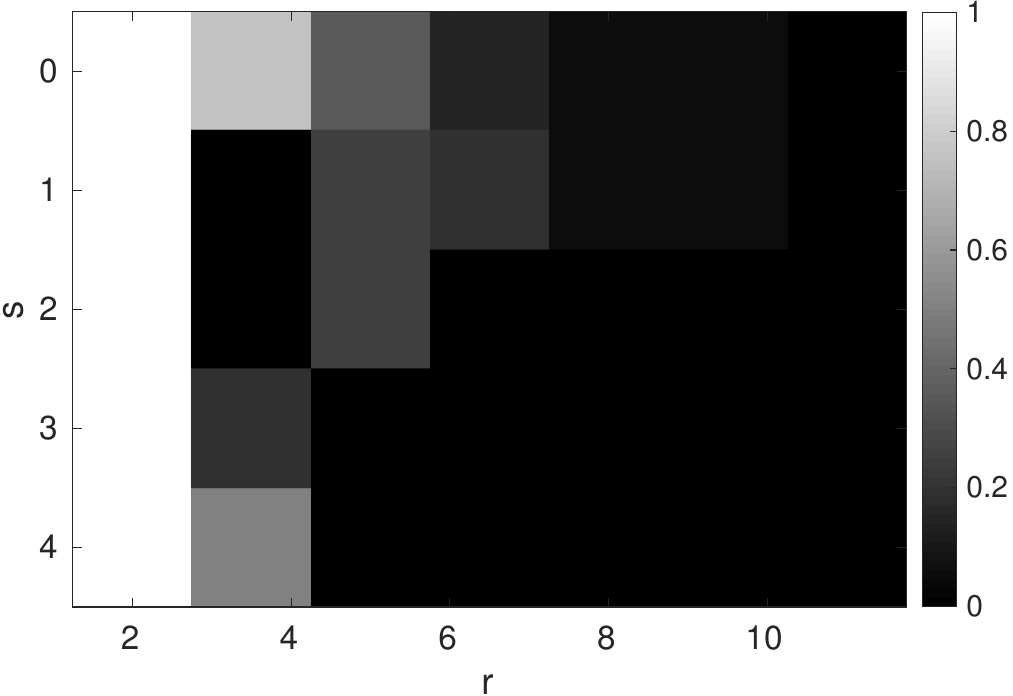}
            \quad 
            \includegraphics[width=.31\textwidth]{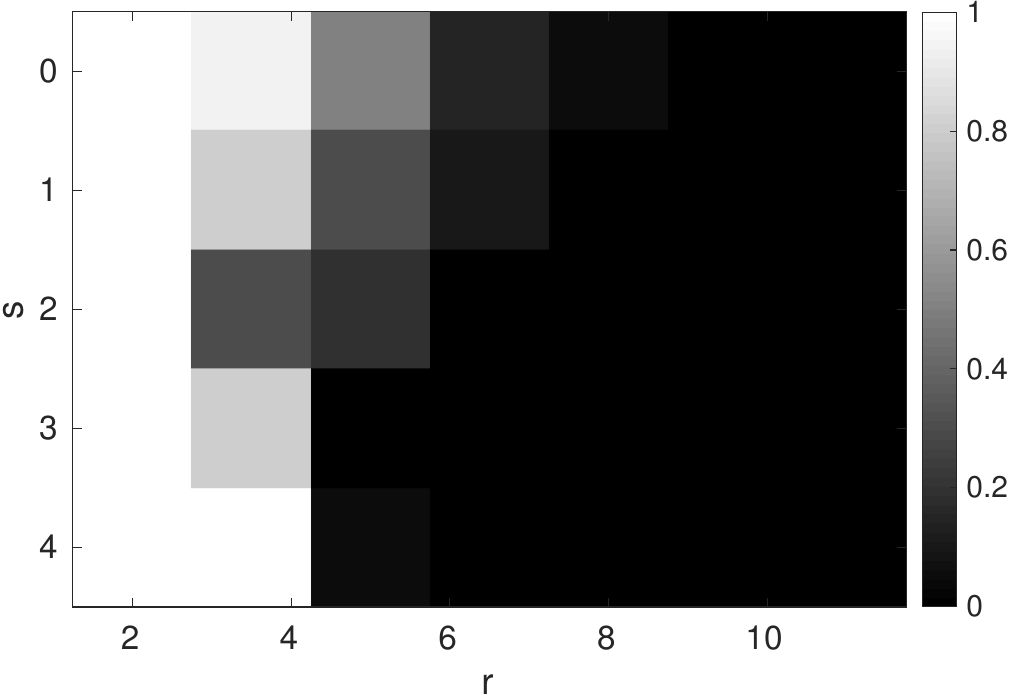} 
            \quad 
            \includegraphics[width=.31\textwidth]{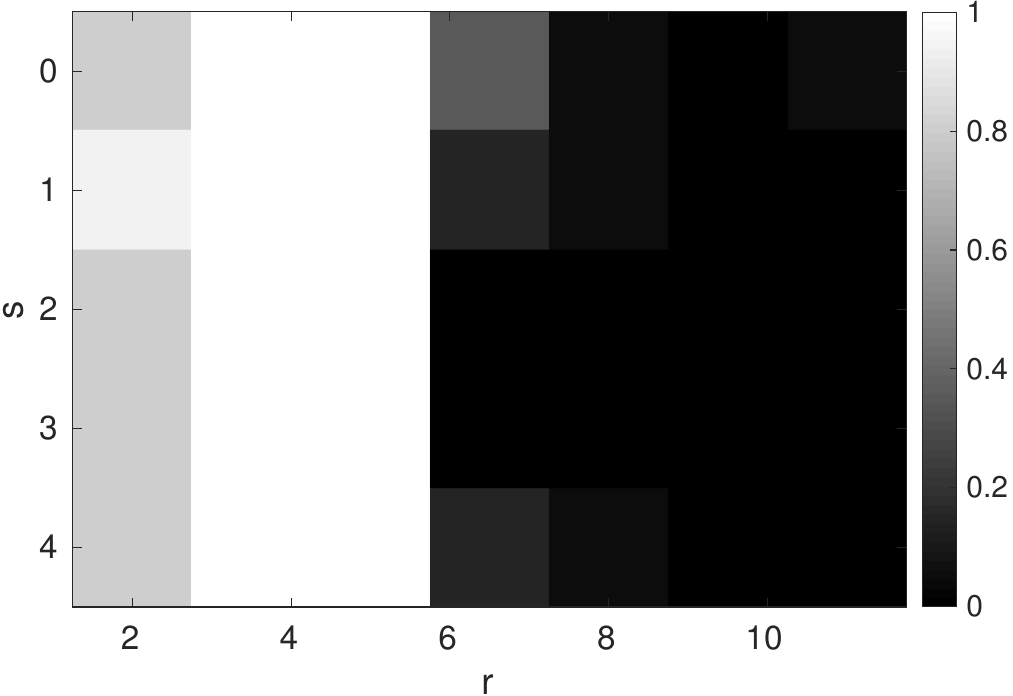} \\
            {\footnotesize (b) Precon RGD (RBB2)  }\\~\\
            % %
        \addtocounter{subfigure}{+2}
        \subfigure[{\scriptsize{($s=0$, $r=9$)}}]{
            \includegraphics[width=.4\textwidth]{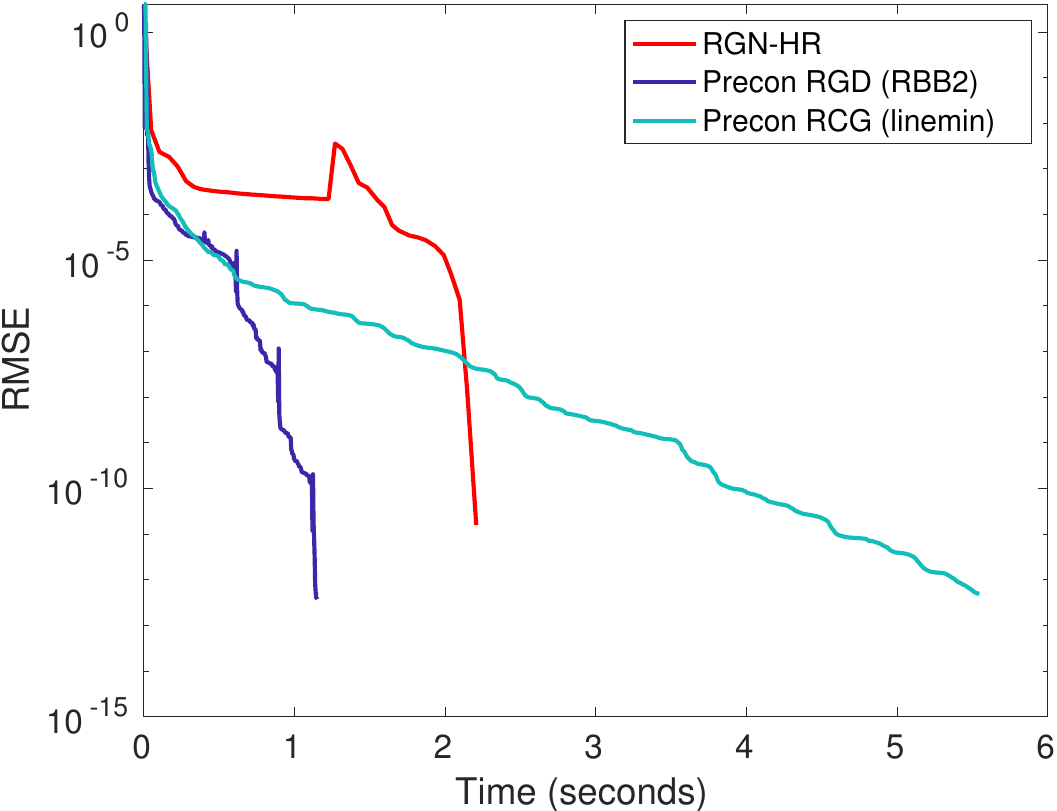}
        }
        \subfigure[{\scriptsize{($s=1$, $r=9$)}}]{
            \includegraphics[width=.4\textwidth]{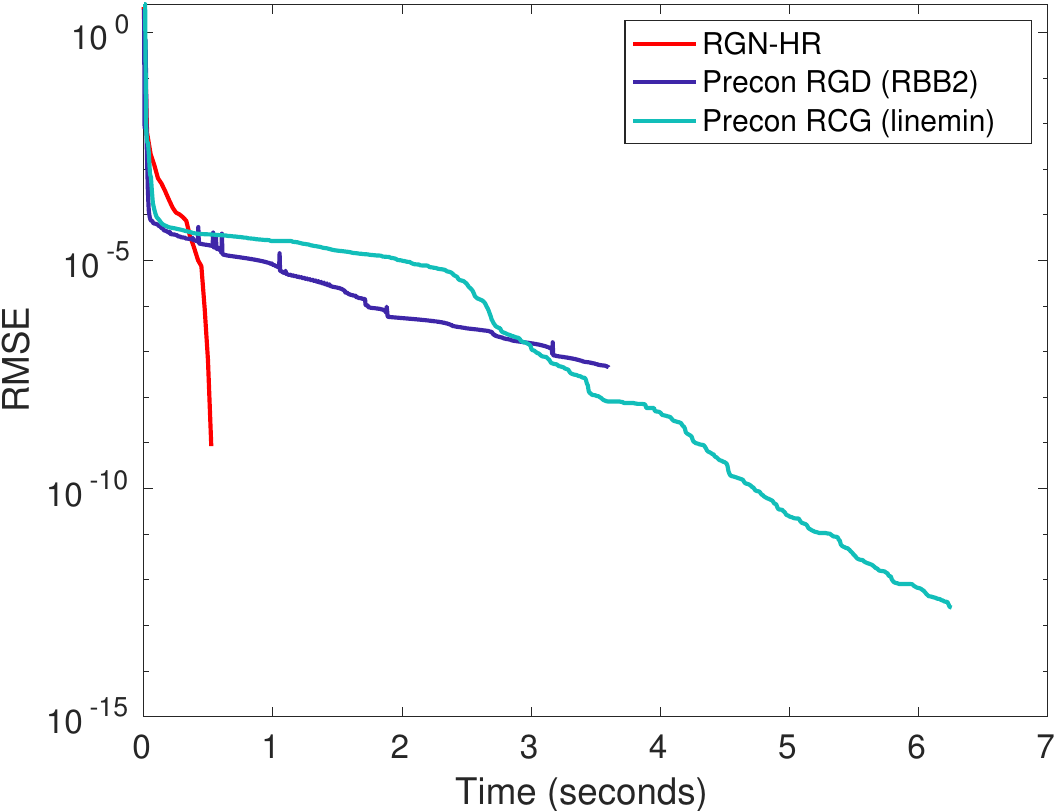}
        }
            \caption{\revj{Tensor approximation performances on tensors
            $\tstar$ generated from the class $\mc{G}_r(s)$. (a): Rate of successful attempts by RGN-HR
            using $R=\lfloor1.5r\rfloor$, $R=2r$, and $R=3r$ (from left to
            right) respectively. 
            (b): Rate of successful attempts by Precon~RGD (RBB2) using
            the same choices of $R$; %
            (c-d): Iteration
            histories %
            randomly picked from successful attempts.}\label{fig:tmp-bv18model2}} 
    \end{figure}

\revj{
The tensor approximation (TAP) task here refers to the problem of finding the best
approximation to a given, fully-observed tensor by a tensor of bounded CP rank,
which is a special case of~\eqref{prog:main} with the sampling operator reduced
to identity (where $\Omega$ is the full index set). 
The performance of these algorithms in the tensor approximation tasks is evaluated as
follows: 
(i) Randomly sample an $k^{\mathrm{th}}$-order low-rank decomposition from 
the model~\eqref{def:datamodel0} with a low Tucker-rank and the {\it Model 2} in~\cite[\S7.4]{breiding2018riemannian}, which consists of generating $A=(A^{(1)},\dots,A^{(k)}) \in \mc{G}_r(s)$ %
for a low CP rank $r$ and $s\in\{0,1,2,3,4\}$, and letting %
$\mc{A} := \cpdp[A]$; 
(ii) create a perturbed tensor $\tstar := \frac{A}{\| A\|}  + 10^{-e} \frac{E}{\|
E\|}$, where all entries of $E \sim \mc{N}(0,1)$  and $e > 0$;
(iii) solve the TAP of $\tstar$ with a rank parameter $R\geq r$, from a random
starting point $q$ using each algorithm, where $q = (Q^{(1)}, \dots, Q^{(k)}) \in
\mc{M} = R^{m_1\times R}\times \dots \times R^{m_k\times R}$ with all entries of $Q^{(i)} \sim  \mc{N}(0,1)$. 
}

\revj{We compare the performances of Precon RGD (RBB2), Precon RCG (linemin) with 
RGN-HR~\cite{breiding2018riemannian} under the two models described above. For
the proposed algorithms, we set $\delta=10^{-15}$. 
In the tests with the Model 2 of \cite[\S7.4]{breiding2018riemannian}, we explore $(r,s)$ on a $7\times 5$ grid of $\{ 2
,3     ,4     ,5     ,7     ,9    ,11\}\times\{0,1,2,3,4\}$ and test with the CP rank
parameter $R\in\{r,
\lfloor1.5r\rfloor, 2r, 3r\}$. 
The chances of successful recovery are estimated after $20$ random runs for each $(s,r)$ setting
and each value of $R$. 
The comparative results on these two models are given in Figures~\ref{fig:tmp-bv18model2}--\ref{fig:tmp-comp-bv18} respectively. }

    \begin{figure}[htbp]
        \centering 
            \includegraphics[width=.41\textwidth]{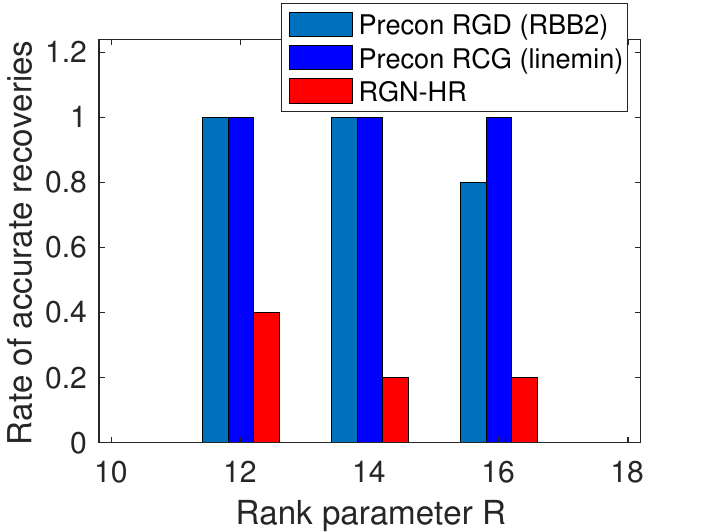} 
            \includegraphics[width=.41\textwidth]{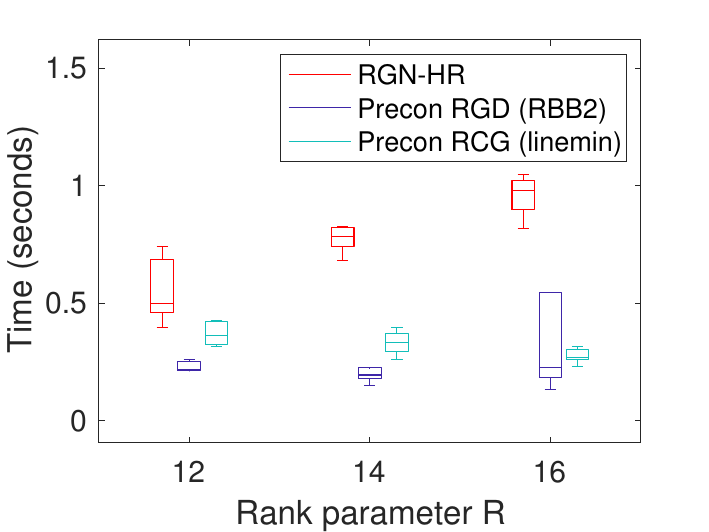}
            \\
            \includegraphics[width=.41\textwidth]{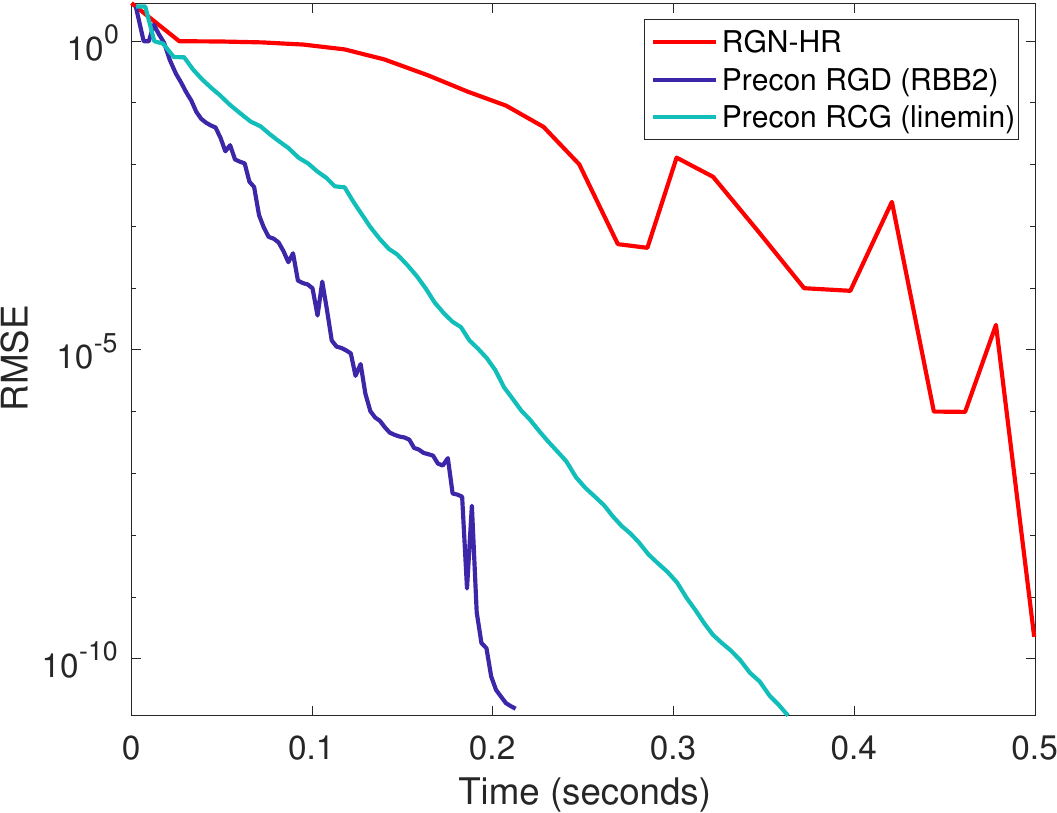} 
            \includegraphics[width=.41\textwidth]{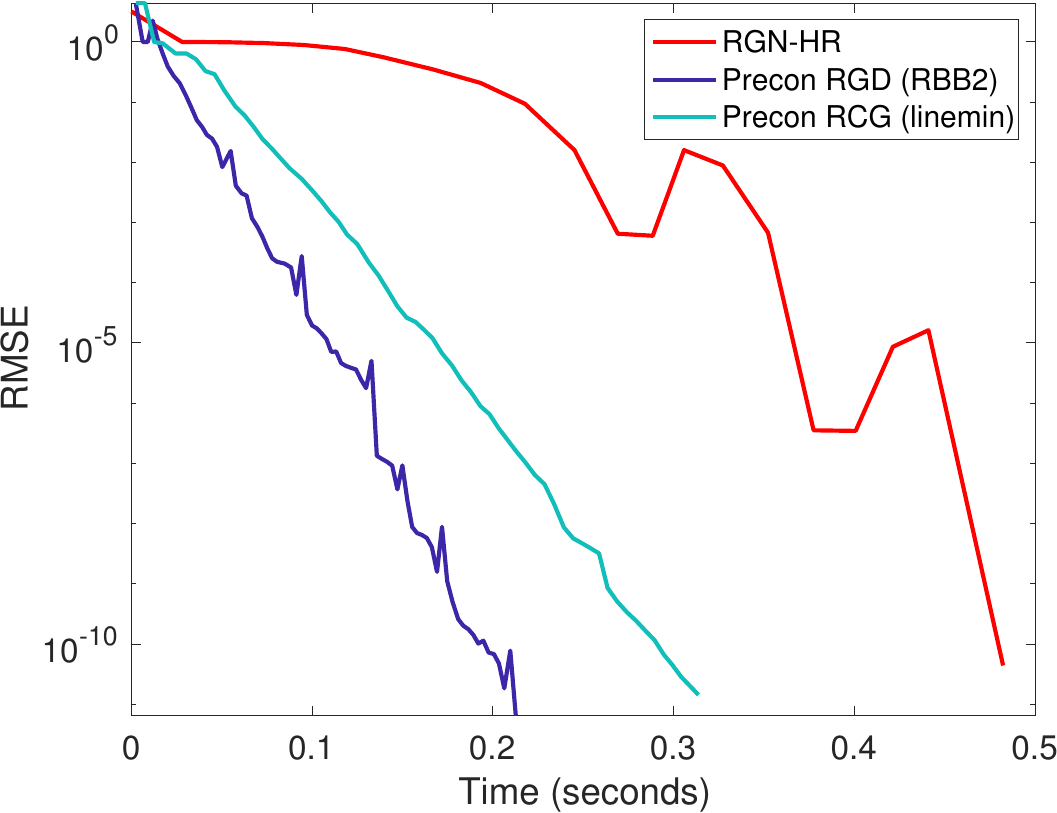} 
            \caption{\revj{{The tensor $\tstar$ is of size $10\times 30\times 30$ and has a Tucker rank $\rstar=(3,5,7)$, which is generated in the same manner as in the experiment of Fig.~2.}}\label{fig:tmp-comp-bv18}} 
    \end{figure}

\revj{From \cref{fig:tmp-bv18model2}, we observe that RGN-HS performs better than the proposed algorithms in a large part of the $7\times 5$ grid of $(r,s)$, where the tensors $\tstar$ under the Model 2 of~\cite[\S7.4]{breiding2018riemannian} are challenging because of their high condition number. 
From \cref{fig:tmp-comp-bv18}, we observe that the proposed algorithms outperform RGN-HR in both successful recovery rate and convergence time, for tensors of the model~\eqref{def:datamodel0}, where the core tensor of $\tstar$ is not cubic. 
}

\end{appendices}

\section*{Acknowledgment}
\revj{We thank the referees for the insightful comments.} 
We gratefully acknowledge the helpful discussions with {P.-A.} Absil and his valuable comments during the preparation of this paper. %


\begin{thebibliography}{10}

\bibitem{AbsMahSep2008}
{\sc P.-A. Absil, R.~Mahony, and R.~Sepulchre}, {\em Optimization Algorithms on
  Matrix Manifolds}, Princeton University Press, Princeton, NJ, 2008.

\bibitem{acar2011scalable}
{\sc E.~Acar, D.~M. Dunlavy, T.~G. Kolda, and M.~M{\o}rup}, {\em Scalable
  tensor factorizations for incomplete data}, %
  % Chemometrics and Intelligent Laboratory Systems, %
  Chemometr. Intell. Lab. Syst., %
  106 (2011), pp.~41--56.

\bibitem{anandkumar2014tensor}
{\sc A.~Anandkumar, R.~Ge, D.~Hsu, S.~M. Kakade, and M.~Telgarsky}, {\em Tensor
  decompositions for learning latent variable models}, %
  % Journal of Machine Learning Research, 
  J. Mach. Learn. Res., %
  15 (2014), pp.~2773--2832.

\bibitem{andersson1998improving}
{\sc C.~A. Andersson and R.~Bro}, {\em Improving the speed of multi-way
  algorithms:: Part i. tucker3}, %
  % Chemometrics and Intelligent Laboratory Systems, %
  Chemometr. Intell. Lab. Syst., %
  42 (1998), pp.~93--103.

\bibitem{TTB_Software}
{\sc B.~W. Bader, T.~G. Kolda, et~al.}, {\em Matlab tensor toolbox version
  3.1}.
\newblock Available online, June 2019.

\bibitem{banco2016sampling}
{\sc D.~Banco, S.~Aeron, and W.~S. Hoge}, {\em Sampling and recovery of mri
  data using low rank tensor models}, in 2016 38th Annual International
  Conference of the IEEE Engineering in Medicine and Biology Society (EMBC),
  IEEE, 2016, pp.~448--452.

\bibitem{bertalmio2000image}
{\sc M.~Bertalmio, G.~Sapiro, V.~Caselles, and C.~Ballester}, {\em Image
  inpainting}, in Proceedings of the 27th annual conference on Computer
  graphics and interactive techniques, ACM Press/Addison-Wesley Publishing Co.,
  2000, pp.~417--424.

\bibitem{Boumal2018}
{\sc N.~Boumal, P.~A. Absil, and C.~Cartis}, {\em {Global rates of convergence
  for nonconvex optimization on manifolds}}, %
  % IMA Journal of Numerical Analysis, %
  IMA J. Numer. Anal., %
  39 (2019), pp.~1--33.

\bibitem{manopt}
{\sc N.~Boumal, B.~Mishra, P.-A. Absil, and R.~Sepulchre}, {\em {M}anopt, a
  {M}atlab toolbox for optimization on manifolds}, %
  % Journal of Machine Learning Research, 
  J. Mach. Learn. Res., %
  15 (2014), pp.~1455--1459.

\bibitem{breiding2018riemannian}
\revj{{\sc P.~Breiding and N.~Vannieuwenhoven}, {\em A {Riemannian} trust region
  method for the canonical tensor rank approximation problem}, %
  % SIAM Journal on Optimization, 28 (2018), pp.~2435--2465.
  SIAM J. Optim., 28 (2018), pp.~2435--2465.
  }

\bibitem{chu2009probabilistic}
{\sc W.~Chu and Z.~Ghahramani}, {\em Probabilistic models for incomplete
  multi-dimensional arrays}, in Artificial Intelligence and Statistics, 2009,
  pp.~89--96.

\bibitem{cichocki2015tensor}
{\sc A.~Cichocki, D.~Mandic, L.~De~Lathauwer, G.~Zhou, Q.~Zhao, C.~Caiafa, and
  H.~A. Phan}, {\em Tensor decompositions for signal processing applications:
  From two-way to multiway component analysis}, %
  % IEEE Signal Processing Magazine, 32 (2015), pp.~145--163.
  IEEE Signal Process. Mag., 32 (2015), pp.~145--163.

\bibitem{da2013hierarchical}
{\sc C.~Da~Silva and F.~Herrmann}, {\em Hierarchical tucker tensor
  optimization-applications to tensor completion. sampta 2013}, in 10th
  International Conference on Sampling Theory and Application, Jacobs
  University Bremen, 2013.

\bibitem{DeLathauwer2000msvd}
{\sc L.~De~Lathauwer, B.~De~Moor, and J.~Vandewalle}, {\em A multilinear
  singular value decomposition}, SIAM J. Matrix Anal. Appl., 21 (2000),
  pp.~1253--1278 (electronic).

\bibitem{DeLathauwer2000:obr}
\leavevmode\vrule height 2pt depth -1.6pt width 23pt, {\em On the best rank-1
  and rank-{$(R_1,R_2,\cdots,R_N)$} approximation of higher-order tensors},
  SIAM J. Matrix Anal. Appl., 21 (2000), pp.~1324--1342 (electronic).

\bibitem{grasedyck2015alternating}
{\sc L.~Grasedyck, M.~Kluge, and S.~Kr{\"a}mer}, {\em Alternating least squares
  tensor completion in the tt-format}, arXiv preprint arXiv:1509.00311,
  (2015).

\bibitem{guan2020alternating}
{\sc Y.~Guan, S.~Dong, P.-A. Absil, and F.~Glineur}, {\em {Alternating
  minimization algorithms for graph regularized tensor completion}}, arXiv
  preprint arXiv:2008.12876,  (2020), pp.~1--30.

\bibitem{hackbusch2012tensor}
{\sc W.~Hackbusch}, {\em Tensor spaces and numerical tensor calculus}, vol.~42,
  Springer, 2012.

\bibitem{hernandez2010simple}
{\sc D.~Hernandez}, {\em Simple tensor products}, %
% Inventiones mathematicae, 181 (2010), pp.~649--675.
Invent. Math., 181 (2010), pp.~649--675.

\bibitem{Hestenes&Stiefel:1952}
{\sc M.~R. Hestenes and E.~Stiefel}, {\em {Methods of conjugate gradients for
  solving linear systems}}, %
  % Journal of research of the National Bureau of Standards, 49 (1952), pp.~409--436.
  J. res. Natl. Bur. Stand., 49 (1952), pp.~409--436.

\bibitem{hitchcock1927expression}
{\sc F.~L. Hitchcock}, {\em The expression of a tensor or a polyadic as a sum
  of products}, Journal of Mathematics and Physics, 6 (1927), pp.~164--189.

\bibitem{iannazzo2018riemannian}
{\sc B.~Iannazzo and M.~Porcelli}, {\em The {Riemannian} {Barzilai}--{Borwein}
  method with nonmonotone line search and the matrix geometric mean
  computation}, IMA J. Numer. Anal., 38 (2018), pp.~495--517.

\bibitem{jain2014provable}
{\sc P.~Jain and S.~Oh}, {\em Provable tensor factorization with missing data},
  % in Advances in Neural Information Processing Systems, 2014, pp.~1431--1439.
  Adv. Neural. Inf. Process. Syst., 2014, pp.~1431--1439.

\bibitem{karatzoglou2010multiverse}
{\sc A.~Karatzoglou, X.~Amatriain, L.~Baltrunas, and N.~Oliver}, {\em
  Multiverse recommendation: n-dimensional tensor factorization for
  context-aware collaborative filtering}, in Proceedings of the fourth ACM
  conference on Recommender systems, ACM, 2010, pp.~79--86.

\bibitem{kasai2016low}
{\sc H.~Kasai and B.~Mishra}, {\em Low-rank tensor completion: a {Riemannian}
  manifold preconditioning approach}, in International Conference on Machine
  Learning, 2016, pp.~1012--1021.

\bibitem{kolda2009tensor}
{\sc T.~G. Kolda and B.~W. Bader}, {\em Tensor decompositions and
  applications}, %
  % SIAM review, 51 (2009), pp.~455--500.
  SIAM Rev., 51 (2009), pp.~455--500.

\bibitem{korah2007spatiotemporal}
{\sc T.~Korah and C.~Rasmussen}, {\em Spatiotemporal inpainting for recovering
  texture maps of occluded building facades}, %
  % IEEE Transactions on Image Processing, 16 (2007), pp.~2262--2271.
  IEEE Trans. Image. Process., 16 (2007), pp.~2262--2271.

\bibitem{kressner2014low}
{\sc D.~Kressner, M.~Steinlechner, and B.~Vandereycken}, {\em Low-rank tensor
  completion by riemannian optimization}, %
  % BIT Numerical Mathematics, 54 (2014), pp.~447--468.
  BIT, 54 (2014), pp.~447--468.

\bibitem{kruskal1977three}
{\sc J.~B. Kruskal}, {\em Three-way arrays: rank and uniqueness of trilinear
  decompositions, with application to arithmetic complexity and statistics},
  % Linear algebra and its applications, 18 (1977), pp.~95--138.
  Linear Algebra Appl., 18 (1977), pp.~95--138.

\bibitem{kruskal1989rank}
\leavevmode\vrule height 2pt depth -1.6pt width 23pt, {\em Rank, decomposition,
  and uniqueness for 3-way and n-way arrays}, Multiway data analysis,  (1989),
  pp.~7--18.

\bibitem{liu2012tensor}
{\sc J.~Liu, P.~Musialski, P.~Wonka, and J.~Ye}, {\em Tensor completion for
  estimating missing values in visual data}, %
  % IEEE transactions on pattern analysis and machine intelligence, 35 (2012), pp.~208--220.
  IEEE Trans. Pattern. Anal. Mach. Intell., 35 (2012), pp.~208--220.

\bibitem{liu2014factor}
{\sc Y.~Liu, F.~Shang, H.~Cheng, J.~Cheng, and H.~Tong}, {\em Factor matrix
  trace norm minimization for low-rank tensor completion}, in Proceedings of
  the SIAM International Conference on Data Mining, SIAM, 2014,
  pp.~866--874.

\bibitem{Mishra2012a}
{\sc B.~Mishra, K.~Adithya, and A.~R. Sepulchre}, {\em {A Riemannian geometry
  for low-rank matrix completion}}, %
  arXiv preprint arXiv:1211.1550, (2012).

\bibitem{mishra2016riemannian}
{\sc B.~Mishra and R.~Sepulchre}, {\em Riemannian preconditioning}, %
% SIAM Journal on Optimization, 26 (2016), pp.~635--660.
SIAM J. Optim., 26 (2016), pp.~635--660.

\bibitem{morup2006parallel}
{\sc M.~M{\o}rup, L.~K. Hansen, C.~S. Herrmann, J.~Parnas, and S.~M. Arnfred},
  {\em Parallel factor analysis as an exploratory tool for wavelet transformed
  event-related eeg}, NeuroImage, 29 (2006), pp.~938--947.

\bibitem{nocedal2006numerical}
{\sc J.~Nocedal and S.~Wright}, {\em Numerical optimization}, Springer Science
  \& Business Media, 2006.

\bibitem{oseledets2011tensor}
{\sc I.~V. Oseledets}, {\em Tensor-train decomposition}, %
% SIAM Journal on Scientific Computing, 33 (2011), pp.~2295--2317.
SIAM J. Sci. Comput., 33 (2011), pp.~2295--2317.

\bibitem{papalexakis2016tensors}
{\sc E.~E. Papalexakis, C.~Faloutsos, and N.~D. Sidiropoulos}, {\em Tensors for
  data mining and data fusion: Models, applications, and scalable algorithms},
  % ACM Transactions on Intelligent Systems and Technology (TIST), 8 (2016), pp.~1--44.
ACM Trans. Intell. Syst. Technol., 8 (2016), pp.~1--44.

\bibitem{phien2016efficient}
{\sc H.~N. Phien, H.~D. Tuan, J.~A. Bengua, and M.~N. Do}, {\em Efficient
  tensor completion: Low-rank tensor train}, arXiv preprint arXiv:1601.01083,
  (2016).

\bibitem{rauhut2015tensor}
{\sc H.~Rauhut, R.~Schneider, and {\v{Z}}.~Stojanac}, {\em Tensor completion in
  hierarchical tensor representations}, in Compressed Sensing and its
  Applications, Springer, 2015, pp.~419--450.

\bibitem{rauhut2017low}
\leavevmode\vrule height 2pt depth -1.6pt width 23pt, {\em Low rank tensor
  recovery via iterative hard thresholding}, %
  % Linear Algebra and its Applications, 523 (2017), pp.~220--262.
  Linear Algebra Appl., 523 (2017), pp.~220--262.

\bibitem{schneider2015convergence}
{\sc R.~Schneider and A.~Uschmajew}, {\em Convergence results for projected
  line-search methods on varieties of low-rank matrices via {\l}ojasiewicz
  inequality}, %
  % SIAM Journal on Optimization, 25 (2015), pp.~622--646.
  SIAM J. Optim., 25 (2015), pp.~622--646.

\bibitem{sorber2013optimization}
{\sc L.~Sorber, M.~Van~Barel, and L.~{De~Lathauwer}}, {\em Optimization-based
  algorithms for tensor decompositions: Canonical polyadic decomposition,
  decomposition in rank-(l\_r,l\_r,1) terms, and a new generalization}, %
  % SIAM Journal on Optimization, 23 (2013), pp.~695--720.
  SIAM J. Optim., 23 (2013), pp.~695--720.

\bibitem{sorber2015structured}
\leavevmode\vrule height 2pt depth -1.6pt width 23pt, {\em Structured data
  fusion}, %
  % IEEE Journal of Selected Topics in Signal Processing, 9 (2015), pp.~586--600.
  IEEE J. Sel. Top. Signal Process., 9 (2015), pp.~586--600.

\bibitem{sorensen2019fiber}
{\sc M.~S{\o}rensen and L.~{De~Lathauwer}}, {\em Fiber sampling approach to
  canonical polyadic decomposition and application to tensor completion}, %
  % SIAM Journal on Matrix Analysis and Applications, 40 (2019), pp.~888--917.
  SIAM J. Matrix Anal. Appl., 40 (2019), pp.~888--917.

\bibitem{Srebro2005}
{\sc N.~Srebro, J.~D. Rennie, and T.~S. Jaakkola}, {\em {Maximum-margin matrix
  factorization}}, %
  % Advances in Neural Information Processing Systems, 17 (2005), pp.~1329--1336.
  Adv. Neural. Inf. Process. Syst., 17 (2005), pp.~1329--1336.

\bibitem{tomasi2005parafac}
{\sc G.~Tomasi and R.~Bro}, {\em Parafac and missing values}, %
% Chemometrics and Intelligent Laboratory Systems, 75 (2005), pp.~163--180.
  Chemometr. Intell. Lab. Syst., 75 (2005), pp.~163--180.

\bibitem{Tucker1966}
{\sc L.~Tucker}, {\em Some mathematical notes on three-mode factor analysis},
  Psychometrika, 31 (1966), pp.~279--311.

\bibitem{tensorlab}
{\sc N.~Vervliet, O.~Debals, L.~Sorber, M.~V. Barel, and L.~D. Lathauwer}, {\em
  Tensorlab 3.0}.
\newblock Available online, Mar. 2016.

\bibitem{yokota2016smooth}
{\sc T.~Yokota, Q.~Zhao, and A.~Cichocki}, {\em Smooth parafac decomposition
  for tensor completion}, %
  % IEEE Transactions on Signal Processing, 64 (2016), pp.~5423--5436.
  IEEE Trans. Signal Process., 64 (2016), pp.~5423--5436.

\end{thebibliography}
\end{document}